\let\expandafter\oldproof\csname\string\proof\endcsname
\let\oldendproof\endproof \renewenvironment{proof}[1][\proofname]{
\oldproof[\ttfamily\scshape \bf #1.] }{\oldendproof}
\def\ve{\varepsilon}  
\def\emp{\emptyset}  
\def\dom{{\rm dom}\,}  \def\epi{{\rm epi\,}}
\def\rge{{\rm rge\,}}  \def\min{\mbox{\rm minimize}}
   \def\B{\mathbb
B}   
\def\ox{\overline{x}} \def\oy{\overline{y}} \def\oz{\overline{z}}
  \def\disp{\displaystyle}
\def\tto{\rightrightarrows} \def\Hat{\widehat}
\def\Tilde{\widetilde} \def\Bar{\overline} \def\ra{\rangle}
\def\la{\langle} \def\ve{\varepsilon} \def\epsilon{\varepsilon}
\def\ox{\bar{x}} \def\oy{\bar{y}} \def\oz{\bar{z}}
 \def\ov{\bar{v}} 
\def\co{\mbox{\rm co}\,} 
\def\gph{\mbox{\rm gph}\,} \def\epi{\mbox{\rm epi}\,}
 \def\dom{\mbox{\rm dom}\,}
\def\ker{\mbox{\rm ker}\,}
 \def\dn{\downarrow} 
\def\ph{\varphi} \def\emp{\emptyset} \def\st{\stackrel}
\def\oR{\Bar{\R}} \def\lm{\lambda} \def\gg{\gamma} \def\dd{\delta}
\def\al{\alpha}  \def\N{{\rm I\!N}}
\def\R{\mathbb{R}}  \def\vt{\vartheta}
\begin{document}
\newtheorem{Theorem}{Theorem}[section]
\newtheorem{Proposition}[Theorem]{Proposition}
\newtheorem{Remark}[Theorem]{Remark}
\newtheorem{Lemma}[Theorem]{Lemma}
\newtheorem{Corollary}[Theorem]{Corollary}
\newtheorem{Definition}[Theorem]{Definition}
\newtheorem{Example}[Theorem]{Example}
\newtheorem{Algorithm}[Theorem]{Algorithm}
\newtheorem{Problem}[Theorem]{Problem}
\renewcommand{\theequation}{{\thesection}.\arabic{equation}}
\renewcommand{\thefootnote}{\fnsymbol{footnote}} \begin{center}
{\bf\Large A Generalized Newton Method for Subgradient Systems}\\[1ex]
PHAM DUY KHANH\footnote{Department of Mathematics, Ho Chi Minh City University
of Education, Ho Chi Minh City, Vietnam E-mails:
khanhpd@hcmue.edu.vn; pdkhanh182@gmail.com}, BORIS S.
MORDUKHOVICH\footnote{Department of Mathematics, Wayne State
University, Detroit, Michigan, USA. E-mail: boris@math.wayne.edu.}
and VO THANH PHAT \footnote{Department of Mathematics, HCMC
University of Education, Ho Chi Minh City, Vietnam and Department of
Mathematics, Wayne State University, Detroit, Michigan, USA.
E-mails: phatvt@hcmue.edu.vn; phatvt@wayne.edu.} \end{center}

\small{\bf Abstract}. This paper proposes and develops a new Newton-type algorithm to solve subdifferential inclusions defined by subgradients of extended-real-valued prox-regular functions. The proposed algorithm is formulated in terms of the second-order subdifferential of such functions that enjoys extensive calculus rules and can be efficiently computed for broad classes of extended-real-valued functions. Based on this and on metric regularity and subregularity properties of subgradient mappings, we establish verifiable conditions ensuring well-posedness of the proposed algorithm and its local superlinear convergence. The obtained results are also new for the class of equations defined by continuously differentiable functions with Lipschitzian gradients (${\cal C}^{1,1}$ functions), which is the underlying case of our consideration. The developed algorithms for prox-regular functions and  its extension to a structured class of composite functions   are formulated in terms of proximal mappings and  forward-backward envelopes.  Besides numerous illustrative examples and comparison with known algorithms for ${\cal C}^{1,1}$ functions and generalized equations, the paper presents applications of the proposed algorithms to  regularized least square problems   arising in statistics, machine learning, and related disciplines\\[1ex]
{\bf Keywords}. gradient and subgradient systems; Newton methods;
		variational analysis; second-order generalized differentiation;
		metric regularity and subregularity; tilt stability in optimization;
		prox-regular functions; superlinear convergence; regularized least
		square problems\vspace*{-0.2in}

\normalsize \section{Introduction and
Overview}\label{intro}\vspace*{-0.05in} \setcounter{equation}{0}

Recall that, given a function $\ph\colon\R^n\to\R$, which is twice
	continuously differentiable (${\cal C}^2$-smooth) around some point
	$\ox\in\R^n$, the {\em classical Newton method} to solve the
	nonlinear {\em gradient system} \begin{equation}\label{gra}
		\nabla\ph(x)=0 \end{equation} constructs the iterative procedure
	\begin{equation}\label{clas-newton} x^{k+1}:=x^k+d^k\;\mbox{ for
			all}\;k\in\mathbb{N}:=\big\{1,2,\ldots\big\}, \end{equation} where
	$x^0\in\mathbb{R}^n$ is a given starting point, and where $d^k$ is a
	solution to the linear system \begin{equation}\label{newton-iter}
		-\nabla\varphi(x^k)=\nabla^2\varphi(x^k)d^k,\quad k=0,1,\ldots,
	\end{equation} written in terms of the Hessian matrix
	$\nabla^2\varphi(x^k)$ of $\varphi$ at $x^k$. As known in classical
	optimization, Newton's algorithm in \eqref{clas-newton} and
	\eqref{newton-iter} is well-defined (i.e., the equations in
	\eqref{newton-iter} are solvable for $d^k$), and the sequence of its
	iterations $\{x^k\}$ superlinearly (even quadratically) converges to
	a solution $\ox$ of \eqref{gra} if $x^0$ is chosen sufficiently
	close to $\ox$ and if the Hessian matrix $\nabla^2\varphi(\ox)$ is
	positive-definite. Note also that, besides being a necessary
	condition for local minimizers of $\ph$, the gradient system
	\eqref{gra} is important for its own sake and holds not only for
	local minimizers and local maximizers of $\ph$, but in essentially
	larger settings. Furthermore, a counterpart of the classical Newton
	method has been developed for solving more general nonlinear
	equations of the type $f(x)=0$, where
	{$f\colon\R^n\to\R^n$} is a continuously
	differentiable (${\cal C}^1$-smooth) mapping, and where
	$\nabla^2\varphi(x^k)$ in \eqref{newton-iter} is replaced by the
	Jacobian matrix of $f$ at the points in question. We are not going
	to deal with the latter method and its extensions in this paper
	while being fully concentrated on the gradient systems \eqref{gra}
	and their appropriate subgradient counterparts.
	
	Concerning the gradient systems of type \eqref{gra} where $\ph$ may
	not be ${\cal C}^2$-smooth around $\ox$, we mention that the
	enormous literature has been devoted to developing various versions
	of the (generalized) Newton method; see, e.g., the books by Dontchev
	and Rockafellar \cite{Donchev09}, Facchinei and Pang \cite{JPang},
	Izmailov and Solodov \cite{Solo14}, Klatte and Kummer \cite{Klatte},
	Ulbrich \cite{Ul}, and the references therein. The vast majority of
	such extensions deals with functions $\ph$ in \eqref{gra} of class
	${\cal C}^{1,1}$ (or ${\cal C}^{1+}$ in the notation of Rockafellar
	and Wets \cite{Rockafellar98}) around $\ox$, which consists of
	continuously differentiable functions with locally Lipschitzian
	derivatives. The most popular generalized Newton method to solve
	\eqref{gra} for functions of this type is known as the {\em
		semismooth Newton method} initiated independently by Kummer
	\cite{Kummer} and by Qi and Sun \cite{LQi}. In the semismooth Newton
	method, the Hessian matrix of $\ph$ in \eqref{newton-iter} is
	replaced by the (Clarke) {\em generalized Jacobian} (collection of
	matrices) of the gradient mapping $\nabla\ph$. Then the
	corresponding Newton iterations are well-defined around
	$\ox$ and exhibit a local superlinear convergence to the solution
	$\ox$ of \eqref{gra} provided that each matrix from the generalized
	Jacobian is {\em nonsingular} and the gradient mapping
	$\nabla\ph$ is {\em semismooth} around $\ox$ in the sense of Mifflin
	\cite{Mifflin}. The latter property has been well investigated and
	applied in variational analysis and optimization, not only in
	connection with the semismooth Newton method. Besides the
	aforementioned books and papers, we refer the reader to Burke and Qi
	\cite{Burke}, Henrion and Outrata \cite{ho}, and Meng et al.
	\cite{msz} among many other publications on the theory and
	applications of such functions.\vspace*{0.03in}
	
	In the case of ${\cal C}^{1,1}$ functions $\ph$, our Newton-type
	algorithm proposes replacing \eqref{newton-iter} by
	\begin{equation}\label{newtonC11}
		-\nabla\varphi(x^k)\in\partial^2\varphi(x^k)(d^k),\quad
		k=0,1,\ldots, \end{equation} where $\partial^2\ph$ stands for {\em
		second-order subdifferential/generalized Hessian} of $\ph$
	introduced by Mordukhovich \cite{m92} for arbitrary
	extended-real-valued functions. This construction reduces to the
	classical Hessian operator for ${\cal C}^2$-smooth functions while
	maintaining key properties of the latter for important classes of
	functions {in broad generality}; see below. In what
	follows we obtain efficient conditions ensuring the solvability of
	the inclusions in \eqref{newtonC11} and superlinear convergence of
	iterates $\{x^k\}$ if the starting point $x^0$ is sufficiently close
	to $\ox$. As shown in this paper, the obtained conditions allow us
	to use the proposed algorithm \eqref{newtonC11} to solve systems
	\eqref{gra} with ${\cal C}^{1,1}$ functions $\ph$ in the situations
	where the semismooth Newton method cannot be applied.

	Observe that algorithm \eqref{newtonC11} has been recently
	introduced and developed, in an equivalent form, in the paper by
	Mordukhovich and Sarabi \cite{BorisEbrahim} to find {\em tilt-stable
		local minimizers} for functions $\ph$ of class ${\cal C}^{1,1}$.
	We'll discuss tilt stability of local minimizers, the notion
	introduced by Poliquin and Rockafellar \cite{Poli}, in the
	corresponding place below with a detailed comparison of the results
	obtained in this paper and in the paper by Mordukhovich and Sarabi.
	Note that here we do not assume that $\ox$ is a local minimizer of
	$\ph$, not even talking about its tilt stability. Observe also that
	both the latter paper and the current one employ the {\em
		semismooth$^*$ property} of $\nabla\ph$ that has been recently
	introduced and developed by Gfrerer and Outrata \cite{Helmut}
	for set-valued mappings as an improvement of the
	standard semismoothness for locally Lipschitzian functions
	used before.
	
	The main thrust of this paper is on developing a generalized Newton
	method to solve {\em subgradient inclusions} of the following type:
	\begin{equation}\label{subgra-inc} 0\in\partial\varphi(x),
	\end{equation} where $\ph\colon\R^n\to\oR:=(-\infty,\infty]$ is an
	extended-real-valued function belonging to a broad class of {\em
		prox-regular} and subdifferentially continuous, which overwhelmingly
	appear in variational analysis and optimization. The subdifferential
	operator used in \eqref{subgra-inc} is understood as the
	(Mordukhovich) {\em limiting subdifferential} of
	extended-real-valued functions that agrees with the classical
	gradient for ${\cal C}^1$-smooth functions and the subdifferential
	of convex analysis when $\ph$ is convex. In very general settings,
	the limiting subdifferential enjoys comprehensive calculus rules
	that can be found in the books by Mordukhovich
	\cite{Mordukhovich06,Mor18} and by Rockafellar and Wets
	\cite{Rockafellar98}. {If $\ph$ is a
		$\mathcal{C}^1$-smooth function, then
		$\partial\varphi=\nabla\varphi$ and that the inclusion
		\eqref{subgra-inc} reduces to the gradient system \eqref{gra}.}
	
	{One of the} generalized Newton algorithms, which is designed in this paper to solve the
	subgradient inclusion \eqref{subgra-inc}, is also based on the
	second-order subdifferential $\partial^2\ph$ with replacing
	\eqref{newtonC11} by \begin{equation}\label{subgra-prox}
		-v^k\in\partial^2\ph(x^k-\lm v^k,v^k)(\lm v^k+d^k)\;\mbox{ with
		}\;v^k:=\frac{1}{\lm}\Big(x^k-{\rm Prox}_\lm\ph(x^k)\Big),
	\end{equation} where ${\rm Prox}_\lm\ph(x)$ stands for the {\em
		proximal mapping} of $\ph$ corresponding to a constructive choice of
	the parameter $\lm>0$. This form is shown to be closely related to
	the Newton-type algorithm developed by Mordukhovich and Sarabi
	\cite{BorisEbrahim}, in terms of Moreau envelopes with somewhat
	different choice of parameters, to find tilt-stable minimizers of
	prox-regular and subdifferentially continuous functions $\ph$. As
	mentioned,  the latter is not an ultimate framework of
	\eqref{subgra-inc}.
	
	Here we develop a new approach to solvability of systems
	\eqref{subgra-prox} with respect to the directions $d^k$ and to
	local superlinear convergence of iterates $x^k\to\ox$ under certain
	{\em metric regularity} and {\em subregularity} properties of the
	subdifferential mapping $\partial\ph$. In particular, all our
	assumptions hold if $\partial\ph$ is semismooth$^*$ at the reference
	point and {\em strongly metrically regular} around it. As shown by
	Drusvyatskiy and Lewis \cite{dl}, for the class of prox-regular and
	subdifferentially continuous functions the latter property is
	equivalent to tilt stability of $\ox$ required by Mordukhovich and
	Sarabi \cite{BorisEbrahim} {\em provided that} $\ox$ is a local
	minimizer of $\ph$, which is not assumed in this paper.
	
	{We also extend our generalized Newton method to
		solve the following structured class of {\em composite optimization
			problems} given in the form \begin{equation}\label{nocvcomposite}
			\min \; \varphi(x):= f(x) + g(x) \quad \text{subject to }\; x \in
			\R^n, \end{equation} where $f:\R^n\to \R$ is $\mathcal{C}^2$-smooth
		while the {\em regularizer} $g:\R^n \to \overline{\R}$ is
		prox-regular and subdifferentially continuous. Problems written in
		format \eqref{nocvcomposite} frequently arise in many applied areas
		such as machine learning, compressed sensing, image processing, etc.
		Since $g$ is generally extended-real-valued, the unconstrained
		format \eqref{nocvcomposite} encompasses problems of {\em
			constrained optimization}. If, in particular, $g$ is the indicator
		function of a closed set, then \eqref{nocvcomposite} becomes an
		optimization problem with geometric constraints. In our algorithm to
		solve problems of type \eqref{nocvcomposite} we employ the machinery
		of {\em forward-backward envelopes}.}
	
	The developed generalized Newton method for subgradient inclusions
	is finally applied to solving  {regularized least
		square problems} that appear in practical models of statistics,
	machine learning, etc. For such problems, we compute the
	second-order subdifferential and the proximal mapping from
	\eqref{subgra-prox} entirely in terms of the given data, derive
	explicit calculation formulas, and then provide
	{solvability and local convergence results for our
		algorithms applied to problems of this class}.\vspace*{0.03in}
	
	The rest of the paper is organized as follows. Section~\ref{prel}
	presents and discusses those notions of variational analysis and
	generalized differentiation, which are broadly used in the
	formulations and proofs of the main results obtained below.
	Section~\ref{sec:solv} is devoted to {\em solvability} of the {\em
		generalized equations} \begin{equation}\label{ge-cod} -\ov\in
		D^*F(\ox,\ov)(d)\;\mbox{ for }\;d\in\R^n, \end{equation} where
	$D^*F(\ox,\ov)(\cdot)$ is the {\em coderivative} of a set-valued
	mapping $F\colon\R^n\tto\R^n$ that is associated with the limiting
	subdifferential $\partial\ph$ as defined in Section~\ref{prel}. The
	framework of \eqref{ge-cod} encompasses all the versions
	\eqref{newton-iter}, \eqref{newtonC11}, and \eqref{subgra-prox} of
	the Newton-type algorithms discussed above. Using well-developed
	calculus rules of limiting generalized differentiation allows us to
	prove that \eqref{ge-cod} is solvable for $d$ if the mapping $F$ is
	{\em strongly metrically subregular} at the reference point
	$(\ox,\ov)$. Furthermore, the {\em strong metric regularity} of $F$,
	in particular, ensures the solvability of \eqref{ge-cod} and the
	{\em compactness} of the  {(generalized) Newton
		directions} therein {\em around} the point in question. The
	solvability results established in Section~\ref{sec:solv} for
	coderivative inclusions are then applied in Section~\ref{sec:solvN}
	to solvability issues for {\em generalized Newton systems} of types
	\eqref{newtonC11} and \eqref{subgra-prox} involving the second-order
	subdifferential $\partial^2\ph$. In this way we identify broad
	classes of functions $\ph\colon\R^n\to\oR$ for which the required
	assumptions on $F=\partial\ph$ are satisfied.
	
	Section~\ref{sec:newtonC11} presents a generalized Newton algorithm
	to solve the gradient equations \eqref{gra} with functions $\ph$ of
	class ${\cal C}^{1,1}$ according to the iteration procedure
	\eqref{newtonC11}. The main result of this section establishes a
	local {\em superlinear convergence} of iterates \eqref{newtonC11} to
	a designated solution $\ox$ of \eqref{gra} under the
	semismoothness$^*$ of the gradient mapping $\nabla\ph$ at $\ox$ and
	under merely its {\em metric regularity} around this point. Even in
	the case of tilt-stable local minimizers of $\ph$, the obtained
	result improves the one from Mordukhovich and Sarabi
	\cite{BorisEbrahim}. We also compare the new algorithm with some
	other generalized Newton methods and show, in particular, that our
	algorithm is well-defined and exhibits a superlinear convergence of
	iterates when the semismooth Newton algorithm cannot be even
	constructed.
	
	Section~\ref{sec:prox} {and
		Section~\ref{sec:Newcomposite}} are the culmination of the paper.
	They describe and justify new Newton-type algorithms to solve the
	subgradient inclusions \eqref{subgra-inc}, where
	$\varphi\colon\R^n\to\oR$ is a {\em prox-regular} function,
	and where $\varphi\colon\R^n\to\oR$ is the cost
	function of the {\em composite optimization problem}
	\eqref{nocvcomposite}, respectively. Note that both
	these extended-real-valued frameworks of $\ph$ incorporates problems
	of {\em constrained optimization} for which inclusion
	\eqref{subgra-inc} provides a necessary condition for local
	minimizers. The results obtained here justifies a constructive and
	well-defined algorithm, with a verifiable choice of the starting
	point, that superlinearly converges to the solution $\ox$ of
	\eqref{subgra-inc} under about the same assumptions as in
	Section~\ref{sec:newtonC11}, but being now addressed to
	$\partial\ph$ instead of $\nabla\ph$. In fact, the proofs of the
	main results in this section are based on the reduction to the
	${\cal C}^{1,1}$ case by using Moreau envelopes,
	{forward-backward envelopes}, and the machinery of
	variational analysis taken from Mordukhovich \cite{Mor18} and
	Rockafellar and Wets \cite{Rockafellar98}.
	
	In Section~\ref{lasso} we present applications of the developed
	Newton-type algorithms to solving  some nonsmooth,
	convex and nonconvex {\em regularized least square problems},
	where we completely calculate all the algorithm
	parameters in terms of the given problem data.
	
	The concluding Section~\ref{conc} summarizes the major contributions
	of the paper and discusses some topics of the future research. Our
	notation is standard in variational analysis and optimization and
	can be found in the aforementioned books \cite{Mor18} and
	\cite{Rockafellar98}. Recall that
	$\mathbb{B}_r(\bar{x}):=\big\{x\in\R^n\;\big|\;\|x-\bar{x}\|\le
	r\big\}$ stands for the closed ball with center $\bar{x}$ and radius
	$r>0$.\vspace*{-0.1in}

\section{Variational Analysis: Preliminaries and
Discussions}\label{prel}\vspace*{-0.05in}\setcounter{equation}{0}

Here we present the needed background material from variational
	analysis and generalized differentiation by following the books of
	Mordukhovich \cite{Mordukhovich06,Mor18} and Rockafellar and Wets
	\cite{Rockafellar98}.
	
	Given a set $\Omega\subset\mathbb{R}^s$ with $\oz\in\Omega$, the
	(Bouligand-Severi) {\em tangent/contingent cone} to $\Omega$ at $\oz$ is
	\begin{equation}\label{tan}
		T_\Omega(\oz):=\big\{w\in\R^s\;\big|\;\exists\,t_k\dn 0,\;w_k\to
		w\;\mbox{ as }\;k\to\infty\;\mbox{ with }\;\oz+t_k w_k\in\Omega\big\}.
	\end{equation} The (Fr\'echet) {\em regular normal cone} to $\Omega$
	at $\bar{z}\in\Omega$ is defined by \begin{equation}\label{rnc}
		\widehat{N}_\Omega(\bar{z}):=\Big\{v\in\mathbb{R}^s\;\Big|\;\limsup_{z\overset{\Omega}{\rightarrow}\bar{z}}\frac{\langle
			v, z-\bar{z}\rangle}{\|z-\bar{z}\|}\le 0\Big\}, \end{equation} where
	the symbol $z\overset{\Omega}{\rightarrow}\bar{z}$ indicates that
	$z\to\bar{z}$ with $z\in\Omega$. It can be equivalently described
	via a duality correspondence with \eqref{tan} by
	\begin{equation}\label{dua} \Hat
		N_\Omega(\oz)=T^*_\Omega(\oz):=\big\{v\in\R^s\;\big|\;\la v,w\ra\le
		0\;\mbox{ for all }\;w\in T_\Omega(\oz)\big\}. 
	\end{equation} 
	The
	(Mordukhovich) {\em limiting normal cones} to $\Omega$ at
	$\bar{z}\in\Omega$ is defined by \begin{equation}\label{lnc}
		N_\Omega(\bar{z}):=\big\{v\in\mathbb{R}^s\;\big|\;\exists\,z_k\st{\Omega}{\to}\bar{z},\;v_k\to
		v\;\text{ as }\;k\to\infty\;\text{ with
		}\;v_k\in\widehat{N}_\Omega(z_k)\big\}. \end{equation} Note that the
	regular normal cone \eqref{rnc} is always convex, while the limiting
	normal cone \eqref{lnc} is often nonconvex (e.g., for the graph of
	$|x|$ at $\oz=(0,0)$), and hence it cannot be obtained by the
	duality correspondence of type \eqref{dua} from any tangential
	approximation of $\Omega$ at $\oz$. Nevertheless, the normal cone
	\eqref{lnc}, as well as the coderivative and subdifferential
	constructions for mappings and functions generated by it and
	described below, enjoy comprehensive {\em calculus rules} that are
	based on {\em variational/extremal principles} of variational
	analysis.\vspace*{0.03in}
	
	Given further a set-valued mapping $F\colon\R^n\tto\R^m$ with the
	\textit{graph} \begin{equation*} \gph
		F:=\big\{(x,y)\in\mathbb{R}^n\times\mathbb{R}^m\;\big|\;y\in
		F(x)\big\}, \end{equation*} the {\em graphical derivative} of $F$ at
	$(\ox,\oy)\in\gph F$ is defined via \eqref{tan} by
	\begin{equation}\label{gra-der}
		DF(\ox,\oy)(u):=\big\{v\in\R^m\;\big|\;(u,v)\in T_{{\rm
				gph}\,F}(\ox,\oy)\big\},\quad u\in\R^n. \end{equation}
	Consider also the \textit{domain, kernel,} and
	\textit{range} of $F$ denoted, respectively, by \begin{equation*}
		\dom F:=\big\{x \in \R^n\;\big|\;F(x) \ne \emp\big\}, \quad  \ker
		F:=\big\{x\in \R^n\;|\; 0 \in F(x)\big\}, \end{equation*}
	\begin{equation*} \rge F:=\big\{y \in \R^m\;\big|\;\exists x \in
		\R^n\quad \text{with }\; y\in F(x)\big\}. \end{equation*}
	The inverse mapping of $F$ is the set-valued mapping $F^{-1}:\R^m \rightrightarrows \R^n$ given by
	$$
	F^{-1}(y):= \{x \in \R^n|\; y \in F(x)\}, \quad y \in \R^m. 
	$$
	The coderivative constructions for $F$ at $(\ox,\oy)\in\gph F$ are
	defined via the regular normal cone \eqref{rnc} and the limiting
	normal cone \eqref{lnc} to the graph of $F$ at this point. They are,
	respectively, the {\em regular coderivative} and the {\em limiting
		coderivative} of $F$ at $(\ox,\oy)$ given by
	\begin{equation}\label{reg-cod} \Hat
		D^*F(\ox,\oy)(v):=\big\{u\in\R^n\;\big|\;(u,-v)\in\Hat N_{{\rm
				gph}\,F}(\ox,\oy)\big\},\quad v\in\R^m, \end{equation}
	\begin{equation}\label{lim-cod}
		D^*F(\ox,\oy)(v):=\big\{u\in\R^n\;\big|\;(u,-v)\in N_{{\rm
				gph}\,F}(\ox,\oy)\big\},\quad v\in\R^m. \end{equation} In the case
	where $F(\bar{x})$ is the singleton $\{\bar{y}\}$, we omit $\oy$ in
	the notation of \eqref{gra-der}--\eqref{lim-cod}. Note that if
	$F\colon\R^n\to\R^m$ is ${\cal C}^1$-smooth around $\ox$, then
	\begin{equation*} DF(\bar{x})=\nabla F(\bar{x})\;\mbox{ and
		}\;\widehat{D}^*F(\bar{x})=D^*F(\bar{x})=\nabla F(\bar{x})^*,
	\end{equation*} where $\nabla F(\bar{x})^*$ is the adjoint/transpose
	matrix of the Jacobian $\nabla F(\bar{x})$.\vspace*{0.05in}
	
	Before considering the {first- and second-order}
	subdifferential constructions for extended-real-valued functions,
	{which are employed} in this paper and are closely
	related to the limiting normals and coderivatives, we formulate the
	{\em metric regularity} and {\em subregularity} properties of
	set-valued mappings that are highly recognized in variational
	analysis and optimization. These properties are
	{frequently used} below.\vspace*{0.03in}
	
	To proceed, recall that the {\em distance function} associated with
	a set $\Omega\subset\R^s$ is \begin{equation*} {\rm
			dist}(x;\Omega):=\inf\big\{\|w-x\|\;\big|\;w\in\Omega\big\},\quad
		x\in\mathbb{R}^s. \end{equation*} A mapping
	$\widehat{F}\colon\R^n\tto\R^m$ is a {\em localization} of
	$F\colon\mathbb{R}^n\tto\mathbb{R}^m$ at $\bar{x}$ for $\bar{y}\in
	F(\bar{x})$ if there exist neighborhoods $U$ of $\bar{x}$ and $V$ of
	$\bar{y}$ such that we have \begin{equation*} \gph\widehat{F}=\gph
		F\cap(U\times V). \end{equation*}
	
	\begin{Definition}[\bf metric regularity and subregularity of mappings]\label{met-reg} Let $F\colon\mathbb{R}^n\tto\mathbb{R}^m$ be a set-valued mapping, and let $(\bar{x},\bar{y})\in\gph F$. We say that:\\[1ex]
		{\bf(i)} $F$ is {\sc metrically regular} around $(\bar{x},\bar{y})$
		with modulus $\mu>0$ if there exist neighborhoods $U$ of $\bar{x}$
		and $V$ of $\bar{y}$ providing the estimate \begin{equation*} {\rm
				dist}\big(x;F^{-1}(y)\big)\le\mu\,{\rm dist}\big(y;F(x)\big)\;\text{
				for all }\;(x,y)\in U\times V. \end{equation*}
		If in addition $F^{-1}$ has a single-valued localization around $(\bar{y},\bar{x})$, then $F$ is {\sc strongly metrically regular} around $(\bar{x},\bar{y})$ with modulus $\mu>0$.\\[1ex]
		{\bf(ii)} $F$ is {\sc metrically subregular} at $(\bar{x},\bar{y})$
		with modulus $\mu>0$ if there exists a neighborhood $U$ of $\bar{x}$
		such that we have \begin{equation*} {\rm
				dist}\big(x;F^{-1}(\bar{y})\big)\le\mu\,{\rm
				dist}\big(\bar{y};F(x)\big)\;\text{ for all }\;x\in U.
		\end{equation*} If $F^{-1}(\bar{y})\cap U=\{\bar{x}\}$, then
		$F$ is {\sc strongly metrically subregular} at $(\bar{x},\bar{y})$
		with modulus $\mu>0$. \end{Definition}
	
	{Observe that the metric regularity properties in
		Definition \ref{met-reg}(i) are \textit{stable/robust} with respect
		to small perturbations of the reference point $(\ox,\oy)$, i.e.,
		they  are preserved in a neighborhood of the point $(\ox,\oy)$.
		It is not always the case for metric subregularity.} The next remark
	summarizes relationships between the above metric
	regularity/subregularity properties of mappings and
	presents {\em generalized differential characterizations} of the
	major ones that are broadly used in this paper.
	
	\begin{Remark}[\bf on metric regularity and subregularity]\label{Morcri} {\rm Observe the following:\\[1ex]
			{\bf(i)} We obviously have that the strong metric regularity of a
			set-valued mapping implies that both its metric regularity and
			strong metric subregularity properties hold. Furthermore, a strongly
			metrically subregular mapping is metrically subregular at the corresponding point. However, metric regularity and strong metric subregularity are generally {\em incomparable}. For example, the mapping $F(x):=|x|$ for all $x\in\mathbb{R}$ is strongly metrically subregular at $(0,0)$, but it is not metrically regular around this point. On the other hand, $F\colon\mathbb{R}\tto\mathbb{R}$ defined by $F(x):=[x,\infty)$ is metrically regular around $(0,0)$ while not being strongly metrically subregular at this point.\\[1ex]
			{\bf(ii)} Simple examples show that a mapping may exhibit the strong
			metric subregularity property at some point while not being strongly
			metrically regular and even merely metrically regular around the
			reference point. Indeed, consider the simplest nonsmooth function
			$F(x):=|x|$ with $(\ox,\oy)=(0,0)$ discussed in (i).
			{The relationships between metric subregularity,
				metric regularity, strong metric subregularity, and strong metric
				regularity are illustrated in the following diagram, where the arrow
				$(\longrightarrow)$ reads ``implies"}. {
				\begin{center} \begin{tikzpicture}[line cap=round,line
						join=round,>=triangle 45,x=1.0cm,y=1.0cm]
						\clip(-2.33889817617,-0.380294618771) rectangle
						(11.787179127,4.02436778191); \draw (3.388467508,3.76778550614)
						node[anchor=north west] {$\text{metric regularity}$}; \draw
						(2.91831062348,0.731561909556) node[anchor=north west]
						{$\text{strong metric subregularity}$}; \draw
						(-2.06107819896,2.22829185154) node[anchor=north west]
						{$\text{strong metric regularity}$}; \draw
						(8.303744028,2.24967370785) node[anchor=north west] {$\text{metric
								subregularity}$}; \draw [->] (5.26909504609,3.16909352935) --
						(9.26542856452,2.35658298942); \draw [->]
						(0.118740083826,1.60821801843) -- (4.15781513722,0.731561909556);
						\draw [->] (5.37594888348,0.795707478498) --
						(9.35091163443,1.62959987474); \draw [->]
						(0.118740083826,2.27105556416) -- (4.09370283478,3.10494796041);
			\end{tikzpicture} \end{center}} {\bf(iii)} A major advantage of the
			generalized differential constructions defined above is the
			possibility to get in their terms complete {\em pointwise
				characterizations} of the metric regularity and strong metric
			subregularity properties of general set-valued mappings. Namely, a
			mapping $F\colon\R^n\tto\R^m$, which graph is locally closed around
			$(\ox,\oy)\in\gph F$, is {\em metrically regular} around this point
			{\em if and only if} we have the kernel coderivative condition
			\begin{equation}\label{cod-cr} \ker
				D^*F(\ox,\oy):=\big\{v\in\R^m\;\big|\;0\in
				D^*F(\ox,\oy)(v)\big\}=\{0\} \end{equation} established by
			Mordukhovich \cite[Theorem~3.6]{Mordu93} via his limiting
			coderivative \eqref{lim-cod} and then labeled as the {\em
				Mordukhovich criterion} in Rockafellar and Wets \cite[Theorems~7.40
			and 7.43]{Rockafellar98}. Broad applications of this result are
			based on {\em robustness} and {\em full calculus} available for the
			limiting coderivative; see the books by Mordukhovich
			\cite{Mordukhovich06,Mor18} and by Rockafellar and Wets
			\cite{Rockafellar98} for more details and references.
			
			A parallel characterization of the (nonrobust) {\em strong metric
				subregularity} property of a (locally) closed-graph mapping
			$F\colon\R^n\tto\R^m$ at $(\ox,\oy)\in\gph F$ is given by $\ker
			DF(\ox,\oy)=\{0\}$ via the (nonrobust) graphical derivative
			\eqref{gra-der} of $F$ at $(\ox,\oy)$ and is known as the {\em Levy-Rockafellar criterion}; see the book by Dontchev and Rockafellar
			\cite[Theorem~4E.1]{Donchev09} with the references and discussions
			therein.} \end{Remark}\vspace*{-0.05in}
	
	Finally in this section, we recall the limiting first-order and
	second-order subdifferential constructions for extended-real-valued
	functions  that are used for describing the subgradient inclusions
	\eqref{subgra-inc} and the Newton-type algorithms \eqref{newtonC11}
	and \eqref{subgra-prox} to compute their solutions. Given an
	extended-real-valued function $\ph\colon\R^n\to\oR$, consider its
	{\em effective domain} and {\em epigraph} defined by, respectively,
	\begin{equation*}
		\dom\ph:=\big\{x\in\R^n\;\big|\;\ph(x)<\infty\big\}\;\mbox{ and
		}\;\epi\ph:=\big\{(x,\al)\in\R^{n+1}\;\big|\;\al\ge\ph(x)\big\}.
	\end{equation*} Then for a fixed point $\ox\in\dom\ph$ we define the
	{\em basic/limiting subdifferential} and  {the {\em
			singular/horizon subdifferential}} of $\ph$ at $\ox$ by,
	respectively, \begin{equation}\label{lim-sub}
		\partial\varphi(\ox):=\big\{v\in\mathbb{R}^n\;\big|\;(v,-1)\in
		N_{{\rm\small
				epi}\,\varphi}\big(\bar{x},\varphi(\bar{x})\big)\big\},
	\end{equation} \begin{equation}\label{sin-sub}
		\partial^\infty\varphi(\ox):=\big\{v\in\mathbb{R}^n\;\big|\;(v,0)\in
		N_{{\rm\small epi}\,\varphi}\big(\bar{x},\varphi(\bar{x})\big)\big\}
	\end{equation} via the limiting normal cone \eqref{lnc} to the
	epigraph of $\ph$ at $(\ox,\ph(\ox))$. For simplicity we use here
	the geometric definitions of the subdifferentials \eqref{lim-sub}
	and \eqref{sin-sub} while referring the reader to the aforementioned
	monographs on variational analysis for equivalent analytic
	representations. Recall that the basic subdifferential
	$\partial\varphi(\ox)$ reduces to the gradient $\{\nabla\ph(\ox)\}$
	if $\ph$ is ${\cal C}^1$-smooth around $\ox$ (or merely strictly
	differentiable at this point), and that $\partial\ph(\ox)$ is the
	subdifferential of convex analysis if $\ph$ is convex. On the other
	hand, the singular subdifferential $\partial^\infty\varphi(\ox)$ of
	a lower semicontinuous (l.s.c.) function $\ph$ reduces to $\{0\}$
	{\em if and only if} $\ph$ is locally Lipschitzian around $\ox$.
	Both constructions \eqref{lim-sub} and \eqref{sin-sub} enjoy in
	parallel {\em full subgradient calculi} in very general settings.
	Let us also mention the {\em scalarization formula}
	\begin{equation}\label{scal} D^*F(\bar{x})(v)=\partial\langle
		v,F\rangle(\bar{x})\;\mbox{ for all }\;v\in\R^m, \end{equation}
	which holds whenever $F\colon\R^n\to\R^m$ is locally Lipschitzian
	around $\ox$.\vspace*{0.03in}
	
	Now we are ready to define the {\em second-order subdifferential} of
	$\ph\colon\R^n\to\oR$ at $\ox\in\dom\ph$ for
	$\ov\in\partial\ph(\ox)$ in the sense of Mordukhovich \cite{m92} as
	the mapping $\partial^2\ph(\ox,\ov)\colon\R^n\tto\R^n$ such that
	\begin{equation}\label{2nd}
		\partial^2\ph(\ox,\ov)(u):=\big(D^*\partial\ph\big)(\ox,\ov)(u)\;\mbox{
			for all }\;u\in\R^n, \end{equation} i.e., by applying the
	coderivative \eqref{lim-cod} to the first-order subgradient mapping
	\eqref{lim-sub}. This second-order subdifferential  (or {\em
		generalized Hessian}) \eqref{2nd} appears in the Newton-type
	iterations \eqref{newtonC11} and \eqref{subgra-prox} for ${\cal
		C}^{1,1}$ and prox-regular functions, respectively, which both go
	back to the classical Newton algorithm \eqref{clas-newton} for
	${\cal C}^2$-smooth functions due to the relationship
	\begin{equation}\label{C2_Case}
		\partial^2\ph(\ox)(u)=\big\{\nabla^2\ph(\ox)u\big\}\;\mbox{ whenever
		}\;u\in\R^n \end{equation} in the ${\cal C}^2$-smooth case. If $\ph$
	is of class ${\cal C}^{1,1}$ around $\ox$, then the computation of
	$\partial^2\ph(\ox)$ reduces to the computation of the limiting
	subdifferential \eqref{lim-sub} of the gradient mapping $\nabla\ph$
	by the scalarization formula \eqref{scal}. Besides the
	well-developed second-order calculus for \eqref{2nd}, variational
	analysis achieves {\em constructive computations} of the
	second-order subdifferential, entirely in terms of the given problem
	data, for major classes if nonsmooth functions arising in important
	problems of constrained optimization, bilevel programming, optimal
	control, operations research, mechanics, economics, statistics,
	machine learning, etc. Among many other publications, we refer the
	reader to Colombo et al. \cite{chhm}, Ding et al. \cite{dsy},
	Dontchev and Rockafellar \cite{dr}, Henrion et al. \cite{hmn,hos},
	Mordukhovich \cite{Mordukhovich06,Mor18}, Mordukhovich and Outrata
	\cite{BorisOutrata}, Mordukhovich and Rockafellar \cite{mr}, Outrata
	and Sun \cite{os}, Yao and Yen \cite{yy}, and the bibliographies
	therein. The new computation of this type is provided in
	Section~\ref{lasso} for the practically important  {\em
		regularized least square problems}   arising in
	statistics and machine learning.\vspace*{-0.1in}

\section{Solvability of Coderivative
Inclusions}\label{sec:solv}\vspace*{-0.05in}\setcounter{equation}{0}

A crucial step in the design and justification of numerical
	algorithms is to establish their {\em well-posedness}, i.e., the
	{\em solvability} of the corresponding iterative systems. In the
	case of the classical Newton method to solve $\nabla\ph(x)=0$, we
	have the equation for $d\in\R^n$ written as
	\begin{equation}\label{newton-eq}
		-\nabla\varphi(x)=\nabla^2\varphi(x)d, \end{equation} which is
	solvable if the Hessian matrix $\nabla^2\varphi(x)$ is invertible.
	In the case of the generalized Newton algorithm for nonsmooth
	functions discussed in Section~\ref{intro}, we extend
	\eqref{newton-eq} in the following way. Given a subgradient
	$v\in\partial\varphi(x)$, consider the inclusion
	\begin{equation}\label{newton-inc} -v\in\partial^2\varphi(x,v)(d)
	\end{equation} and find conditions ensuring the solvability of
	\eqref{newton-inc} with respect to $d\in\R^n$. Due to \eqref{2nd},
	the second-order inclusion \eqref{newton-inc} can be written as
	\begin{equation}\label{cod-inc} -v\in D^*F(x,v)(d) \end{equation}
	with $F:=\partial\varphi$, where $D^*$ stands for the limiting
	coderivative \eqref{lim-cod}.\vspace*{0.05in}
	
	The major goal of this section is to investigate the {\em
		solvability} of the {\em coderivative inclusion} \eqref{cod-inc}
	with respect to $d\in\R^n$. In the next section, we proceed with the
	study of solvability of the generalized Newton systems
	\eqref{newton-inc} and establish appropriate conditions on functions
	$\ph$, which allow us to efficiently apply the solvability results
	obtained for \eqref{cod-inc} to the case of systems
	\eqref{newton-inc} of our main interest.\vspace*{0.03in}
	
	The first theorem here verifies the solvability of \eqref{cod-inc}
	for $d$ at any point $(\ox,\ov)\in\gph F$ where the mapping $F$ is
	{\em strongly metrically subregular}. The proof of this result is
	based on major calculus rules for the limiting generalized
	differential constructions.\vspace*{-0.05in}
	
	\begin{Theorem}[\bf solvability of coderivative
		inclusions]\label{mainre} Let $F\colon\R^n\tto\R^n$ be a set-valued
		mapping  {whose graph} is locally closed around a given point
		$(\ox,\ov)\in\gph F$. If $F$ is strongly metrically subregular at
		$(\ox,\ov)$, then there exists $d\in\R^n$ satisfying the inclusion
		\begin{equation}\label{cod-inc1} -\ov\in D^*F(\ox,\ov)(d).
		\end{equation} 
	\end{Theorem}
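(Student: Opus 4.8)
The plan is to exploit the dual relationship between the strong metric subregularity of $F$ and the coderivative inclusion \eqref{cod-inc1} via the characterization recalled in Remark~\ref{Morcri}(iii). By the Levy--Rockafellar criterion, strong metric subregularity of $F$ at $(\ox,\ov)$ is equivalent to the condition $\ker DF(\ox,\ov)=\{0\}$ on the graphical derivative. The first step is therefore to translate this kernel condition into a nontriviality statement about the tangent cone $T_{\gph F}(\ox,\ov)$ at the reference point: the condition says that whenever $(0,w)\in T_{\gph F}(\ox,\ov)$ one must have $w=0$, equivalently that no nonzero ``vertical'' tangent direction exists.

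The core idea is to build a feasible direction $d$ for \eqref{cod-inc1} out of this tangential information by passing to the dual (normal) side. Since $(\ox,\ov)\in\gph F$ and $\gph F$ is closed around this point, the regular normal cone $\Hat N_{\gph F}(\ox,\ov)$ and the limiting normal cone $N_{\gph F}(\ox,\ov)$ are well defined, and by \eqref{dua} the regular normal cone is exactly the polar $T^*_{\gph F}(\ox,\ov)$. The strategy is to show that the kernel condition $\ker DF(\ox,\ov)=\{0\}$ forces the pair $(-\ov,-d)$, for a suitably chosen $d$, to lie in $N_{\gph F}(\ox,\ov)$, which by definition \eqref{lim-cod} of the limiting coderivative is precisely the assertion $-\ov\in D^*F(\ox,\ov)(d)$. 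The natural route is to set up an auxiliary optimization or separation problem whose optimality/feasibility conditions produce a normal vector with prescribed first component $-\ov$; I would try minimizing a linear functional associated with $\ov$ over a tangentially constrained set, or equivalently invoke a separation argument against the tangent cone, so that the kernel triviality guarantees the resulting normal is nonzero in the right coordinate and yields the required $d$.

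The step I expect to be the main obstacle is producing the normal vector with the \emph{prescribed} first coordinate $-\ov$ rather than merely establishing that \emph{some} nonzero normal exists. Strong metric subregularity is a primal (graphical-derivative) condition, and the target inclusion is a limiting-coderivative condition, so the argument must bridge from the contingent cone to the limiting normal cone; unlike the metric-regularity/coderivative-kernel pairing in \eqref{cod-cr}, here there is no clean symmetric duality, since the graphical derivative characterizes subregularity while the limiting coderivative characterizes regularity. I anticipate that closing this gap will require the limiting calculus rules and variational/extremal principles alluded to after \eqref{lnc}, applied to an appropriately perturbed or localized version of $\gph F$, to upgrade a regular-normal construction at nearby points into a limiting normal at $(\ox,\ov)$ carrying the correct component $-\ov$. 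Verifying that the constructed $d$ indeed satisfies $(-\ov,-d)\in N_{\gph F}(\ox,\ov)$, and not merely in some larger cone, is where the care is needed.
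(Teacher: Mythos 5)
Your proposal has a genuine gap, and the central strategy cannot be repaired in the form you sketch. A first, smaller issue: you misstate the Levy--Rockafellar criterion. The kernel condition $\ker DF(\ox,\ov)=\{0\}$ means that $(u,0)\in T_{\gph F}(\ox,\ov)$ forces $u=0$ (no nonzero \emph{horizontal} tangent direction); your reading, that $(0,w)\in T_{\gph F}(\ox,\ov)$ forces $w=0$, is the condition $DF(\ox,\ov)(0)=\{0\}$, which characterizes isolated calmness of $F$ (equivalently, strong metric subregularity of $F^{-1}$), not of $F$. The more serious problem is your core plan: separating against the tangent cone at $(\ox,\ov)$, or minimizing a linear functional over a tangentially constrained set at that point, can only produce elements of the regular normal cone $\Hat N_{\gph F}(\ox,\ov)=T^*_{\gph F}(\ox,\ov)$, and under strong metric subregularity this cone may contain \emph{no} vector of the form $(-\ov,-d)$ whatsoever. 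Concretely, take $F\colon\R\tto\R$ with $F(x):=\{1+|x|,\,1-|x|\}$ and $(\ox,\ov)=(0,1)$. Then $F$ is strongly metrically subregular at $(0,1)$ with modulus $1$, the tangent cone to $\gph F$ at $(0,1)$ is the union of the four diagonal rays $\R_{+}(\pm 1,\pm 1)$, and its polar is $\{(0,0)\}$; hence no $d$ satisfies $(-1,-d)\in\Hat N_{\gph F}(0,1)$. The conclusion of the theorem holds only through limiting normals collected from nearby points: here $(-1,-1)\in N_{\gph F}(0,1)$, so $d=1$ works. You do flag the upgrade from regular to limiting normals as the main obstacle, but you offer no mechanism for it, and that mechanism is the entire content of the proof; polarity at the fixed reference point is a dead end.

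The paper bridges this gap by quantitative rather than tangential means, and this is the idea your proposal is missing. Strong metric subregularity is first recast (via Dontchev--Rockafellar, Theorem 3I.3) as isolated calmness of the inverse: $F^{-1}(\ov)\cap U=\{\ox\}$ together with $\|x-\ox\|\le\ell\|v-\ov\|$ for all $x\in F^{-1}(v)\cap U$ and $v\in V$. One then minimizes the linear function $x\mapsto\la\ov,x\ra$ over $G(v):=F^{-1}(v)\cap U$, forming the marginal function $\mu(v)$. The calmness estimate yields the lower calmness bound $\mu(v)\ge\mu(\ov)-\ell\|\ov\|\,\|v-\ov\|$, which guarantees $\partial\mu(\ov)\ne\emp$; the marginal-function subdifferential rule combined with a normal cone intersection rule gives $\partial\mu(\ov)\subset D^*F^{-1}(\ov,\ox)(\ov)$, and the inversion formula \eqref{inverse} converts any element of this set into a direction $d$ satisfying \eqref{cod-inc1}. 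Note where the prescribed first component $-\ov$ comes from: it is built into the objective $\la\ov,\cdot\ra$, whose partial gradient $\nabla_x\ph(\ov,\ox)=\ov$ enters the marginal rule --- it is not extracted from any separation argument. The limiting (rather than regular) normal cone appears automatically because the limiting subdifferential $\partial\mu(\ov)$ and the limiting coderivative calculus are what the marginal-function rule delivers, which is exactly the robustness your outline needs and does not supply.
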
 
	\begin{proof} It follows from
		Theorem~3I.3 in Dontchev and Rockafellar \cite{Donchev09} that the
		assumed strong metric subregularity of $F$ around $(\ox,\ov)$ is
		equivalent to the existence of neighborhoods $U$ of $\ox$ and $V$ of
		$\ov$ together with a constant $\ell>0$ such that $F^{-1}(\ov)\cap
		U=\{\ox\}$ and \begin{equation}\label{isolate}
			\|x-\ox\|\le\ell\|v-\ov\|\;\mbox{ for all }\;x\in F^{-1}(v)\cap
			U\;\mbox{ and }\;v\in V. \end{equation} Passing to appropriate
		subsets, we suppose for convenience that the sets $U$ and $V$ are
		closed and bounded. Due to the local closedness of
		$F$ around $(\ox,\ov)$, we can assume further without loss of
		generality that the set $\gph G\cap (U\times V)$ is closed.
		Consider further the set-valued mapping $G\colon\R^n\tto\R^n$ given
		by \begin{equation*} G(v):=F^{-1}(v)\cap U\;\mbox{ for all
			}\;v\in\R^n \end{equation*} and then define the {\em
			marginal/optimal value function} $\mu\colon\R^n\to\oR$ and the
		corresponding argminimum mapping $M\colon\R^n\tto\R^n$ by,
		respectively, \begin{equation}\label{marg}
			\mu(v):=\inf\big\{\varphi(v,x)\;\big|\;x\in G(v)\big\}\;\mbox{ and
			}\;M(v):= \big\{x\in G(v)\;\big|\;\varphi(v,x)=\mu(v)\big\},\quad
			v\in\R^n, \end{equation} where
		$\varphi(v,x):=\langle\bar{v},x\rangle$ for all
		$(v,x)\in\mathbb{R}^n\times\mathbb{R}^n$. It is clear that
		$(\bar{v},\bar{x})\in\gph M$. We intend to verify
		that the marginal function \eqref{marg} is l.s.c.\ around $\ov$. To
		proceed, pick any $v \in \text{\rm int}V$ and check first that for $\epsilon>0$
		there exists $\delta >0$ ensuring the implication
		\begin{equation}\label{incG} \|u-v\|\leq \delta \Longrightarrow G(u)
			\subset G(v) + \epsilon \mathbb{B}
		\end{equation} Suppose on the contrary that \eqref{incG}
		does not hold and then find $\epsilon>0$ and a sequence $\{u_k\}$
		converging to $v$ such that $G(u_k) \not \subset G(v)+ \epsilon\mathbb{B}$. This means
		that for each $k \in \N$ there exists $x_k \in G(u_k)$ such that
		$x_k \notin G(v) + \epsilon\mathbb{B}$. Since we
		have $\{x_k\}\subset U$ where $U$ is compact, there exists a
		subsequence $\{x_{k_j}\}$ of $\{x_k\}$ converging to some $x^*$ as
		$j \to \infty$. It follows from the closedness of $\gph
		G\cap(U\times V)$ and the choice of $x_{k_j}\in G(u_{k_j})$ and
		$u_{k_j} \to v$ with $x_{k_j}\to x^*$ that $x^*\in G(v)$, which
		implies that $x_{k_j} \notin \mathbb{B}_\epsilon(x^*)$ for any $j
		\in\N$. This is a clear contradiction that verifies \eqref{incG}.
		Let us now show that for any $\epsilon>0$ there exists $\delta>0$
		such that \begin{equation}\label{lscmu} \mu(u) \geq  \mu(v)
			-\epsilon \quad \text{whenever }\; \|u - v\|\leq \delta.
		\end{equation} Indeed, using \eqref{incG} gives us $\delta>0$
		providing the implication \begin{equation}\label{incG2} \|u-v\|\le
			\delta \Longrightarrow G(u) \subset
			G(v) + \frac{\epsilon}{\|\ov\|+1}\mathbb{B}. \end{equation} 
		Pick $u \in \mathbb{B}_\delta(v)$. By \eqref{incG2}, for any $x'\in G(u)$, there is $x \in G(v)$ with $x'-x \in \epsilon/(\|\ov\|+1)\mathbb{B}$, and so
		$$ \langle \ov, x' \rangle - \langle \ov,x\rangle = \langle \ov, x'
		-x \rangle \geq -\|\ov\|\cdot\|x'-x\| \geq -\epsilon, $$ which
		implies in turn that
		$$
		\langle \ov , x' \rangle \geq \mu(v) - \epsilon \quad \text{for all }\; x' \in G(u).
		$$
		Therefore, we arrive at the conditions
		$$ \mu(u) = \inf_{x'\in G(u)} \langle \ov,
		x'\rangle  \geq \mu(v)
		-\epsilon, $$ and hence justifies \eqref{lscmu}, which means that the
		marginal function $\mu$ is l.s.c.\ around $\ov$.
		
		To show that the mapping $M$ is locally bounded around $\ov$,
		observe that for each $v\in V$ we have \begin{equation*} M(v)\subset
			G(v)= F^{-1}(v)\cap U\subset U, \end{equation*} which tells us that
		the image set $M(V)$ is bounded, i.e., the mapping $M$ is locally
		bounded around $\ov$. To evaluate now the limiting subdifferential
		\eqref{lim-sub} of the marginal function \eqref{marg} by
		Theorem~4.1(ii) from Mordukhovich \cite{Mor18}, it remains to check
		the qualification condition \begin{equation*}
			\partial^\infty\varphi(\bar{v},\bar{x})\cap\big(-N_{\text{gph}\,G}(\bar{v},\bar{x})\big)=\{0\}
		\end{equation*} therein, which automatically holds due to the
		Lipschitz continuity of the function $\ph$ around $(\ox,\ov)$. Since
		we have in \eqref{marg} that $M(\bar{v})=\{\bar{x}\}$,
		$\nabla_v\varphi(\bar{v},\bar{x})=0$, and
		$\nabla_x\varphi(\bar{v},\bar{x})=\bar{v}$ in \eqref{marg}, it follows that
		\begin{equation}\label{inc}
			\partial\mu(\bar{v})\subset\nabla_v\varphi(\bar{v},\bar{x})+D^*
			G(\bar{v},\bar{x})\big(\nabla_x\varphi(\bar{v},\bar{x})\big)=D^*G(\bar{v},\bar{x})(\bar{v}).
		\end{equation}
		
		To proceed further, consider the mapping $H(v)\equiv U$ on $\R^n$
		and observe that $N_{\text{gph}\,H}(\bar{v},\bar{x})=\{0\}$ by
		$(\bar{v},\bar{x})\in\text{int}(\text{gph}\,H)$. Thus
		$N_{\text{gph}H}(\bar{v},\bar{x})=\{0\}$ and the qualification
		condition \begin{equation*}
			N_{\text{gph}\,H}(\bar{v},\bar{x})\cap\big(-N_{\text{gph}\,F^{-1}}(\bar{v},\bar{x})\big)=\{0\}
		\end{equation*} is satisfied. This allows us to apply the normal
		cone intersection rule from Mordukhovich \cite[Theorem~2.16]{Mor18}
		and get the relationships \begin{equation*}
			N_{\text{gph}\,G}(\bar{v},\bar{x})\subset
			N_{\text{gph}\,F^{-1}}(\bar{v},\bar{x})+N_{\text{gph}\,H}(\bar{v},\bar{x})=N_{\text{gph}\,F^{-1}}(\bar{v},\bar{x}).
		\end{equation*} Combining the latter with \eqref{inc} gives us the
		inclusions \begin{equation}\label{mu-inc}
			\partial\mu(\bar{v})\subset D^*G(\bar{v},\bar{x})(\bar{v})\subset
			D^*F^{-1}(\bar{v},\bar{x})(\bar{v}). \end{equation}
		
		Let us now show that $\partial\mu(\ov)\ne\emp$. Recall from
		Mohammadi et al. \cite[Proposition~2.1]{mms} that if a function
		$\psi\colon\R^n\to\oR$ is l.s.c.\ around $\ov\in\dom\psi$ and
		satisfies the ``lower calmness" property
		\begin{equation}\label{calm}
			\psi(v)\ge\psi(\ov)-\eta\|v-\ov\|\;\mbox{ for all }\;v\in V
		\end{equation} with some $\eta\ge 0$ and a neighborhood $V$ of
		$\ov$, then $\partial\psi(\ov)\ne\emp$. To establish \eqref{calm}
		for $\psi:=\mu$, we verify in what follows the fulfillment of the
		estimate \begin{equation}\label{calmness}
			\mu(v)-\mu(\bar{v})\ge-\ell\|\bar{v}\|\cdot\|v-\bar{v}\|\;\mbox{
				whenever }\;v\in V. \end{equation} Observe first that if $G(v)=\emp$
		for some $v\in V$, then $\mu(v)=\infty$, and so estimate
		\eqref{calmness} is obviously satisfied. In the remaining case where
		$G(v)\ne\emp$ for a fixed vector $v\in V$, pick any $x\in G(v)$ and
		then check by \eqref{isolate} that \begin{equation*}
			\langle\bar{v},x\rangle-\langle\bar{v},\bar{x}\rangle\ge-\|\bar{v}\|\cdot\|x-\bar{x}\|\ge-\ell\|\bar{v}\|\cdot\|v-\bar{v}\|. \end{equation*} Indeed, the condition
		$M(\bar{v})=\{\bar{x}\}$ clearly yields \eqref{calmness}, and hence
		$\partial\mu(\ov)\ne\emp$. It justifies by \eqref{mu-inc} the
		existence of $u\in\R^n$ satisfying $u\in D^*F^{-1}
		(\bar{v},\bar{x})(\bar{v})$. To complete the proof of the theorem,
		recall that \begin{equation}\label{inverse} z\in
			D^*F(\bar{x},\bar{y})(u)\Longleftrightarrow-u\in\big(D^*F^{-1}\big)(\bar{y},\bar{x})(-z),
		\end{equation} which readily ensures the fulfillment of
		\eqref{cod-inc1} and thus finishes the proof. \end{proof}
	
	The following example shows that the strong metric subregularity
	assumption of Theorem~\ref{mainre} cannot be replaced by metric
	subregularity of $F$ at $(\ox,\ov)$, or even by metric regularity of
	$F$ around this point in order to guarantee the solvability of the
	coderivative inclusion \eqref{cod-inc1}.\vspace*{-0.05in}
	
	\begin{Example}[\bf insolvability of coderivative inclusions under
		metric regularity]\label{insolv}  {\rm  Consider the set-valued
			mapping $F\colon\R\tto\R$ defined by \begin{equation}\label{ex1}
				F(x):=[0,1]\;\mbox{ for all }\;x\in\mathbb{R}. \end{equation} Then
			the graphical set $\gph F=\R\times[0,1]$ is convex, and we easily
			calculate the limiting normal cone to the graph of \eqref{ex1} as
			follows: \begin{equation*} N_{\text{gph}\,F}(x,y)=\begin{cases}
					\{(0,0)\}&\text{if}\qquad y\in(0,1),\\
					\{0\}\times\mathbb{R}_{+}&\text{if}\qquad y=1,\\
					\{0\}\times\mathbb{R}_{-}&\text{if}\qquad y=0.\\
			\end{cases} \end{equation*} Pick
			$(\bar{x},\bar{v}):=\left(1,\frac{1}{2}\right)\in\gph F$ and show
			that $F$ is metrically regular around $(\bar{x},\bar{v})$. Indeed,
			taking $u\in\R^n$ with $0\in D^*F(\bar{x},\bar{v})(u)$, we readily
			have that \begin{equation*} (0,-u)\in
				N_{\text{gph}\,F}(\bar{x},\bar{v})=\{(0,0)\}, \end{equation*} and
			hence $u=0$. It follows from the Mordukhovich criterion
			\eqref{cod-cr} that $F$ is metrically regular around
			$(\bar{x},\bar{v})$. However, it is easy to see that there exists no
			$d\in\R$ which solves \eqref{cod-inc1}.} \end{Example}
	
	The next example shows that the strong metric subregularity, being a
	nonrobust property, does not ensure the {\em robust solvability} of
	the coderivative inclusion \eqref{cod-inc}, i.e., its solvability in
	a neighborhood of the reference point. Given $(x,v)\in\gph F$,
	consider the set of feasible solutions of to inclusion
	\eqref{cod-inc} at this point that is defined by
	\begin{equation}\label{newton-dir}
		\Gamma_F(x,v):=\big\{d\in\R^n\;\big|\;-v\in D^*F(x,v)(d)\big\}.
	\end{equation}\vspace*{-0.15in}
	
	\begin{Example}[\bf failure of robust solvability under strong
		metric subregularity]\label{ex-nonrob} {\rm Consider the set-valued
			mapping $F\colon\R\tto\R$ defined by \begin{equation}\label{ex2}
				F(x):=\begin{cases}
					\{0\}\cup[1,\infty)&\text{if}\quad x=0,\\
					[1,\infty)&\text{if}\quad x\ne 0. \end{cases} \end{equation} It is
			clear that $F$ is strongly metrically subregular at $(0,0)$ and that
			the graph of $F$ is closed around this point. Let us show that for
			any neighborhood $U\times V$ of the origin in $\R^2$ and any nonzero
			pair $(x,v)\in\gph F\cap(U\times V)$ there exists no $d\in\R$
			satisfying the coderivative inclusion \eqref{cod-inc}. To proceed,
			observe that  {$\gph F=(0,0)\cup\R\times[1,\infty)$}
			for $F$ from \eqref{ex2}. Then we readily get that
			\begin{equation*} N_{\text{gph}\,F}(x,v)=\begin{cases}
					\R^2&\text{if}\quad x =0,\;v =0,\\
					\big\{(0,\lambda)\;\big|\;\lambda\le 0\big\}&\text{if}\quad v\ge 1,
			\end{cases} \end{equation*} which yields the limiting coderivative
			expression \begin{equation*} D^*F(x,v)(u)=\begin{cases}
					\R&\text{if}\quad (x,v)=(0,0),\\
					\{0\}&\text{if}\quad(x,v)\ne(0,0),\;u\ge 0,\\
					\emp&\text{otherwise}. \end{cases} \end{equation*} It easily follows
			from this formula that there exists no $d\in\R$ satisfying
			\eqref{cod-inc} for any $(x,v)\in\gph F$ except $(x,v)=(0,0)$. Note
			also that the set $\Gamma_F(0,0)=\mathbb{R}$ from \eqref{newton-dir}
			is not bounded.} \end{Example}
	
	Now we are ready to show that the replacement of the strong metric
	subregularity of $F$ at $(\ox,\ov)$ in Theorem~\ref{mainre} by its
	{\em robust} counterpart, which is the {\em strong metric
		regularity} of $F$ around $(\ox,\ov)$, leads us to robust
	solvability of the coderivative inclusion \eqref{cod-inc} and thus
	ensures the well-posedness of generalized Newton iterations in the
	algorithms designed below in this paper.\vspace*{-0.05in}
	
	\begin{Theorem}[\bf robust solvability of coderivative
		inclusions]\label{strongsol} Let $F\colon\R^n\tto\R^n$ be a
		set-valued mapping {whose graph} is closed around
		$(\ox,\ov)\in\gph F$. If $F$ is strongly metrically regular around
		this point, then there is a neighborhood $U\times V$ of $(\ox,\ov)$
		such that for each $(x,v)\in\gph F\cap(U\times V)$ there exists a
		direction $d\in\R^n$ satisfying the coderivative inclusion
		\eqref{cod-inc}. Moreover, the set-valued mapping $\Gamma_F$ from
		\eqref{newton-dir} is compact-valued for all $(x,v)\in\gph
		F\cap(U\times V)$. 
	\end{Theorem}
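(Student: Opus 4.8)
The plan is to exploit the robustness of strong metric regularity in order to localize Theorem~\ref{mainre} for nonemptiness, and then to upgrade nonemptiness of $\Gamma_F$ to compactness by invoking the Mordukhovich criterion \eqref{cod-cr}. Recall that the strong metric regularity of $F$ around $(\ox,\ov)$ provides neighborhoods $U$ of $\ox$ and $V$ of $\ov$ such that the localization $v\mapsto F^{-1}(v)\cap U$ is single-valued and Lipschitz continuous on $V$ with some constant $\ell>0$. Shrinking $U$ and $V$ if necessary so that $\gph F$ remains closed over $U\times V$, I would fix this $U\times V$ as the neighborhood claimed in the statement, and then treat the two assertions separately.

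First I would establish nonemptiness of $\Gamma_F(x,v)$ for each $(x,v)\in\gph F\cap(U\times V)$. The key observation is that strong metric regularity around $(\ox,\ov)$ forces strong metric subregularity at every such nearby graph point. Indeed, single-valuedness of the localization yields $F^{-1}(v)\cap U=\{x\}$, while its Lipschitz continuity gives $\|x'-x\|\le\ell\|v'-v\|$ for all $x'\in F^{-1}(v')\cap U$ and $v'\in V$, which is precisely the isolated-calmness estimate \eqref{isolate} at $(x,v)$. Consequently $F$ is strongly metrically subregular at $(x,v)$, and, since $\gph F$ is closed around this point, Theorem~\ref{mainre} applies there to furnish a direction $d$ with $-v\in D^*F(x,v)(d)$, i.e., $\Gamma_F(x,v)\ne\emp$.

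It remains to prove that each $\Gamma_F(x,v)$ is compact. Closedness is immediate: by definition \eqref{lim-cod} we have $d\in\Gamma_F(x,v)$ if and only if $(-v,-d)\in N_{\gph F}(x,v)$, and the limiting normal cone is always closed, so $\Gamma_F(x,v)$ is a closed set. For boundedness I would argue by contradiction using positive homogeneity of the normal cone. Suppose there were $d_k\in\Gamma_F(x,v)$ with $\|d_k\|\to\infty$. Since $(-v,-d_k)\in N_{\gph F}(x,v)$ and this cone is invariant under multiplication by positive scalars, dividing by $\|d_k\|$ gives $(-v/\|d_k\|,\,-d_k/\|d_k\|)\in N_{\gph F}(x,v)$. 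Extracting a subsequence with $d_k/\|d_k\|\to\bar d$ and $\|\bar d\|=1$, and passing to the limit by closedness of the normal cone, yields $(0,-\bar d)\in N_{\gph F}(x,v)$, that is, $\bar d\in\ker D^*F(x,v)$. Here I would use the fact that metric regularity is itself a robust neighborhood property, so the metric regularity of $F$ around $(\ox,\ov)$ implied by its strong metric regularity persists around each nearby $(x,v)$; the Mordukhovich criterion \eqref{cod-cr} then gives $\ker D^*F(x,v)=\{0\}$, contradicting $\|\bar d\|=1$. Hence $\Gamma_F(x,v)$ is bounded and therefore compact.

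The main obstacle I anticipate is the propagation step: one must verify carefully that strong metric regularity around the single reference point $(\ox,\ov)$ transmits both the strong metric subregularity needed to invoke Theorem~\ref{mainre} and the kernel condition $\ker D^*F(x,v)=\{0\}$ needed for boundedness, uniformly over all graph points in one common neighborhood $U\times V$. Once this robustness is pinned down, the nonemptiness is a direct citation of Theorem~\ref{mainre}, and the compactness reduces to the elementary normalization argument above, the only genuinely new ingredient beyond the previous theorem being the boundedness obtained from the Mordukhovich criterion.
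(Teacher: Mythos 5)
Your proposal is correct, but it takes a genuinely different route from the paper's own proof. The paper argues directly and self-containedly: strong metric regularity yields a single-valued, locally Lipschitzian localization $\vartheta$ of $F^{-1}$ around $(\ov,\ox)$, so for every $(x,v)\in\gph F\cap(U\times V)$ the scalarization formula \eqref{scal} gives $D^*F^{-1}(v,x)(v)=D^*\vartheta(v)(v)=\partial\langle v,\vartheta\rangle(v)$, and since the limiting subdifferential of a locally Lipschitzian function is nonempty and compact, the inverse-coderivative relation \eqref{inverse} delivers both assertions at once; in effect the paper identifies the solution set explicitly as $\Gamma_F(x,v)=-\partial\langle v,\vartheta\rangle(v)$. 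You instead proceed modularly: for nonemptiness you propagate the isolated-calmness estimate \eqref{isolate} from $(\ox,\ov)$ to every nearby graph point $(x,v)$ (single-valuedness of the localization gives $F^{-1}(v)\cap U=\{x\}$ and its Lipschitz continuity gives the estimate), which is precisely the Dontchev--Rockafellar characterization of strong metric subregularity at $(x,v)$, and then you cite Theorem~\ref{mainre} there; for compactness you observe that $\Gamma_F(x,v)$ is closed because $N_{{\rm gph}\,F}(x,v)$ is closed, and bounded by the conic rescaling argument combined with the Mordukhovich criterion \eqref{cod-cr}, which applies at $(x,v)$ because metric regularity is defined by an estimate on a fixed product neighborhood and hence holds around every graph point inside it. Both propagation steps you flag as the main obstacles are valid exactly for the reasons you give. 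As for what each approach buys: the paper's proof is shorter and produces an explicit formula for $\Gamma_F(x,v)$, with compactness coming for free from properties of Lipschitz scalarizations; yours concentrates the hard analysis in Theorem~\ref{mainre} and, notably, decouples the two conclusions --- your boundedness-plus-closedness argument uses only metric regularity (not strong), so it shows that under mere metric regularity around $(\ox,\ov)$ the sets $\Gamma_F(x,v)$ are compact but possibly empty, which is consistent with Example~\ref{insolv}, where the strong hypothesis fails and $\Gamma_F$ is indeed empty.
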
 
	\begin{proof} Since $F$ is strongly
		metrically regular around $(\bar{x},\bar{v})$, it follows from
		Definition~\ref{met-reg}(i) that the inverse mapping
		$F^{-1}\colon\R^n\tto\R^n$ admits a single-valued localization
		$\vt\colon V\to U$ around $(\bar{v},\bar{x})$, which is locally
		Lipschitzian around $\bar{v}$. This implies, together with the
		scalarization formula \eqref{scal}, that for each $(x,v)\in\gph
		F\cap(U\times V)$ we have the representations
		\begin{equation}\label{scar1}
			D^*F^{-1}(v,x)(v)=D^*\vartheta(v)(v)=\partial\langle
			v,\vartheta\rangle(v). \end{equation} It is well known (see, e.g.,
		Mordukhovich \cite[Theorem~1.22]{Mor18}) that the limiting
		subgradient set of a locally Lipschitzian function is nonempty and
		compact. Hence it follows from \eqref{scar1} that the set
		$D^*F^{-1}(v,x)(-v)$ is nonempty and compact in $\R^n$. Taking any
		$u\in D^*F^{-1}(v,x)(-v)$, we deduce from \eqref{inverse} that
		$-v\in D^*F(x,v)(d)$ with $d:=-u$, which verifies the robust
		solvability of \eqref{cod-inc} around $(\ox,\ov)$. The claimed
		compactness of \eqref{newton-dir} follows from the compactness of
		$\partial\langle v,\vartheta\rangle(v)$. 
	\end{proof}
	
	Note that the strong metric regularity in Theorem~\ref{strongsol} is
	{\em just a sufficient condition} for robust solvability of the
	coderivative inclusion \eqref{cod-inc} and its second-order
	subdifferential specifications studied in the next section. As we see below, the required robust solvability is exhibited even in the case of coderivative inclusions \eqref{newton-iter} generated by subdifferentially continuous functions $\ph$ without the strong metric regularity assumption on $\partial\ph$.\vspace*{-0.1in}

\section{Solvability of Generalized Newton
Systems}\label{sec:solvN}\vspace*{-0.05in}\setcounter{equation}{0}

In this section we consider the second-order subdifferential
	inclusions \eqref{newton-inc} generated by extended-real-valued
	functions $\ph\colon\R^n\to\oR$. Such systems appear in our
	generalized Newton algorithms, which were discussed in
	Section~\ref{intro} and will be fully developed in what follows. As
	mentioned, the second-order subdifferential systems
	\eqref{newton-inc} are specifications of the coderivative ones
	\eqref{cod-inc} for $F:=\partial\ph$, while the subdifferential
	structure of $F$ creates strong opportunities to efficiently
	implement and improve the assumptions of Theorems~\ref{mainre} and
	\ref{strongsol} for important classes of functions $\ph$ that are
	often encountered in finite-dimensional variational analysis and
	optimization.
	
	There are two groups of assumptions in both Theorems~\ref{mainre}
	and \ref{strongsol}: one on the closed graph of $F$, and the other
	on the strong metric subregularity and regularity properties. Let us
	start with the first one: {\em when is the limiting subgradient
		mapping $\partial\ph$ of $($locally$)$ closed
		graph}?\vspace*{0.03in}
	
	It is well known and easily follows from definitions \eqref{lnc} and
	\eqref{lim-sub} that the limiting subgradient mapping
	$\partial\ph\colon\R^n\tto\R^n$ is closed-graph around
	$(\ox,\ov)\in\gph\partial\ph$ if $\ph$ is {\em continuous} around
	$\ox$. However, this important and broad setting does not encompass
	functions that are locally extended-real-valued around $\ox$, while
	such functions are the most interesting for applications, e.g., to
	constrained optimization. This is the reason for the following
	definition taken from Rockafellar and Wets
	\cite[Definition~13.28]{Rockafellar98}.\vspace*{-0.05in}
	
	\begin{Definition}[\bf subdifferentially continuous
		functions]\label{sub-cont} A function $\ph\colon\R^n\to\oR$ is {\sc
			subdifferentially continuous} at $\ox\in\dom\ph$ for
		$\ov\in\partial\ph(\ox)$ if for any $(x_k,v_k)\to(\ox,\ov)$ with
		$v_k\in\partial\ph(x_k)$ we have $\ph(x_k)\to\ph(\ox)$ as
		$k\to\infty$. If this holds for all $\ov\in\partial\ph(\ox)$, then
		$\ph$ is subdifferentially continuous at $\ox$.
	\end{Definition}\vspace*{-0.05in}
	
	Note that $\ph$ is obviously subdifferentially continuous at any
	point $\ox\in\dom\ph$ where $\ph$ is continuous merely relative to
	its domain. It easily follows from the subdifferential construction
	of convex analysis that any convex extended-real-valued function is
	subdifferentially continuous at every $\ox\in\dom\ph$. As has been
	well recognized in variational analysis and optimization, the class
	of subdifferentially continuous functions is much broader and
	includes, in particular, {\em strongly amenable} functions, {\em
		lower-${\cal C}^2$} functions, etc.; see Rockafellar and Wets
	\cite{Rockafellar98}.\vspace*{0.05in}
	
	The next theorem on solvability and robust solvability of
	generalized Newton systems is a direct consequence of
	Theorems~\ref{mainre}, \ref{strongsol} and
	Definition~\ref{sub-cont}.\vspace*{-0.05in}
	
	\begin{Theorem}[\bf solvability and robust solvability of generalized Newton systems]\label{solvability}
		
		Let $\ph\colon\R^n\to\oR$ be subdifferentially continuous around some $\ox\in\dom\ph$. Then the following hold:\\[1ex]
		{\bf(i)} Given $\ov\in\partial\ph(\ox)$, assume that the mapping $\partial\ph\colon\R^n\tto\R^n$ is strongly metrically subregular at $(\ox,\ov)$. Then there exists $d\in\R^n$ satisfying the second-order subdifferential inclusion \eqref{newton-inc} for $(x,v):=(\ox,\ov)$.\\[1ex]
		{\bf(ii)} Given $\ov\in\partial\ph(\ox)$, assume that the
		subgradient mapping $\partial\ph$ is strongly metrically regular
		around $(\ox,\ov)$. Then there is a neighborhood $U\times V$ of
		$(\ox,\ov)$ such that for each $(x,v)\in\gph\partial\ph\cap(U\times
		V)$ there exists a direction $d\in\R^n$ satisfying the second-order
		subdifferential inclusion \eqref{newton-inc}. Moreover, the
		set-valued mapping $\Gamma_{\partial\ph}$ from \eqref{newton-dir} is
		compact-valued for all $(x,v)\in\gph\partial\ph\cap(U\times V)$.
	\end{Theorem}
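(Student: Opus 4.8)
The plan is to obtain both assertions by specializing the coderivative solvability results of Theorems~\ref{mainre} and~\ref{strongsol} to the subgradient mapping $F:=\partial\ph$. The bridge between the two settings is purely definitional: by \eqref{2nd} we have $\partial^2\ph(x,v)(d)=D^*(\partial\ph)(x,v)(d)$, so the second-order subdifferential inclusion \eqref{newton-inc} is literally the coderivative inclusion \eqref{cod-inc} for $F=\partial\ph$. Consequently, once I know that $F=\partial\ph$ satisfies the standing hypothesis of those theorems---local closedness of $\gph F$ around $(\ox,\ov)$---part (i) is exactly Theorem~\ref{mainre} and part (ii) is exactly Theorem~\ref{strongsol}, both read for this particular $F$.

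The only step requiring genuine work is verifying that subdifferential continuity makes $\gph\partial\ph$ locally closed around $(\ox,\ov)$; this is the point anticipated in the discussion preceding Definition~\ref{sub-cont}, where ordinary continuity of $\ph$ was seen to suffice but fails to cover extended-real-valued functions. I would argue as follows. Take any sequence $(x_k,v_k)\in\gph\partial\ph$ near $(\ox,\ov)$ with $(x_k,v_k)\to(\tilde x,\tilde v)$, and aim to show $\tilde v\in\partial\ph(\tilde x)$. By the geometric definition \eqref{lim-sub}, $v_k\in\partial\ph(x_k)$ means $(v_k,-1)\in N_{\epi\ph}(x_k,\ph(x_k))$, so the natural base points are $(x_k,\ph(x_k))$. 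The obstruction to passing to the limit is precisely the behavior of the heights $\ph(x_k)$: without control of these values the base points need not converge to a point of $\epi\ph$ sitting over $\tilde x$. This is where Definition~\ref{sub-cont} enters---subdifferential continuity around $\ox$ forces $\ph(x_k)\to\ph(\tilde x)$, so that $(x_k,\ph(x_k))\to(\tilde x,\ph(\tilde x))\in\epi\ph$ while $(v_k,-1)\to(\tilde v,-1)$.

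With the base points now convergent, I would invoke the robustness (outer-limit closedness) built into the limiting normal cone \eqref{lnc}: having $(v_k,-1)\in N_{\epi\ph}$ at points tending to $(\tilde x,\ph(\tilde x))$ with $(v_k,-1)\to(\tilde v,-1)$ yields $(\tilde v,-1)\in N_{\epi\ph}(\tilde x,\ph(\tilde x))$, i.e.\ $\tilde v\in\partial\ph(\tilde x)$. This establishes the local closedness of $\gph\partial\ph$ and discharges the standing hypothesis. The proof then concludes by direct invocation: under strong metric subregularity of $\partial\ph$ at $(\ox,\ov)$, Theorem~\ref{mainre} produces $d$ with $-\ov\in D^*(\partial\ph)(\ox,\ov)(d)$, which is \eqref{newton-inc} at $(\ox,\ov)$, giving (i); under strong metric regularity around $(\ox,\ov)$, Theorem~\ref{strongsol} supplies the neighborhood $U\times V$ on which \eqref{newton-inc} is solvable for every $(x,v)\in\gph\partial\ph\cap(U\times V)$ together with the compact-valuedness of $\Gamma_{\partial\ph}$, giving (ii). I expect the robustness-plus-value-convergence verification of closed graph to be the only nontrivial ingredient, everything else being a transcription of the previous two theorems.
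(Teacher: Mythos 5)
Your overall route coincides with the paper's own proof: identify \eqref{newton-inc} with the coderivative inclusion \eqref{cod-inc} for $F=\partial\ph$ via definition \eqref{2nd}, verify that $\gph\partial\ph$ is locally closed around $(\ox,\ov)$, and then quote Theorem~\ref{mainre} for (i) and Theorem~\ref{strongsol} for (ii); the paper does exactly this, dismissing the closedness step as ``easy to check.'' Your reduction is fine, and so is your use of the outer robustness of the limiting normal cone (which indeed holds along sequences inside a set converging to a point of the set, by a diagonal extraction from \eqref{lnc}, with no closedness hypothesis). The genuine gap is the one step you yourself flag as nontrivial: the claim that ``subdifferential continuity around $\ox$ forces $\ph(x_k)\to\ph(\tilde x)$.'' This is circular. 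Definition~\ref{sub-cont} is a property of a point $\tilde x$ \emph{for a subgradient} $\tilde v\in\partial\ph(\tilde x)$: it only constrains sequences of graph points converging to a pair already known to lie in $\gph\partial\ph$. Here $\tilde v\in\partial\ph(\tilde x)$ is precisely what you are trying to prove, so subdifferential continuity at $\tilde x$ cannot be invoked for the pair $(\tilde x,\tilde v)$; and subdifferential continuity at the points $x_k$, or at $\ox$ for $\ov$, says nothing about a sequence converging to $(\tilde x,\tilde v)\ne(\ox,\ov)$.

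The gap is not cosmetic, because the implication you need is false under the stated hypothesis alone. If $\ph(x_k)\to c>\ph(\tilde x)$, your limiting step only places $(\tilde v,-1)$ in $N_{\epi\ph}(\tilde x,c)$ at a point lying strictly above the graph of $\ph$, which does not give $\tilde v\in\partial\ph(\tilde x)$. A concrete l.s.c.\ example on $\R^2$: let $\ph(x,y):=0$ for $x\le 0$, $\ph(x,y):=2^{-j}(y+1)$ for $2^{-j-1}<x\le 2^{-j}$, $j=0,1,2,\ldots$, and $\ph(x,y):=y+1$ for $x>1$. Near the origin (say $|y|<\frac12$) this function is subdifferentially continuous at every point: it is continuous on each strip and on the line $x=0$, while on a jump line $x=2^{-j-1}$ one has $\partial\ph(2^{-j-1},y)=\{(t,2^{-j-1}):t\ge 0\}$ and no sequence of graph points from the right strip, whose gradient is $(0,2^{-j})$, can converge to such a pair. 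Yet $\big((x,0),(0,2^{-j})\big)\to\big((2^{-j-1},0),(0,2^{-j})\big)\notin\gph\partial\ph$ as $x\dn 2^{-j-1}$, and these defective limit points converge, as $j\to\infty$, to $\big((0,0),(0,0)\big)\in\gph\partial\ph$; hence $\gph\partial\ph$ is not locally closed around that graph point. (The same criticism applies to the paper's one-line justification.) What actually makes the closedness work in the settings the paper uses is \emph{prox-regularity}: taking $x:=\tilde x$ in \eqref{prox} gives $\ph(\tilde x)\ge\ph(x_k)+\la v_k,\tilde x-x_k\ra-\frac{r}{2}\|\tilde x-x_k\|^2$, hence $\limsup_k\ph(x_k)\le\ph(\tilde x)$, and lower semicontinuity supplies the missing value convergence $\ph(x_k)\to\ph(\tilde x)$, after which your outer-limit step closes the argument (subdifferential continuity serving only to discard the value-attentive localization). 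So your proof becomes correct if continuity of $\ph$, or (continuous) prox-regularity, is added to the hypotheses, but not under subdifferential continuity alone.
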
\vspace*{-0.1in} 
	\begin{proof} It is easy to check that
		the imposed subdifferential continuity assumption on $\ph$ ensures
		that the graph of $\partial\ph$ is locally closed around
		$(\ox,\ov)\in\gph\partial\ph$. Then the claimed assertions (i) and
		(ii) follow from Theorem~\ref{mainre} and Theorem~\ref{strongsol},
		respectively. 
	\end{proof} 
	
	Observe that Example~\ref{insolv}, which demonstrates that the
	strong metric subregularity assumption on the mapping $F$ in
	\eqref{cod-inc} cannot be replaced by the metric regularity and
	hence by the metric subregularity ones in the conclusion of
	Theorem~\ref{mainre}, still works for Theorem~\ref{solvability}(i)
	dealing with mappings $F=\partial\ph$ of the subdifferential type.
	Indeed, it is shown by Wang \cite[Theorem~4.6]{wang} that there
	exists a Lipschitz continuous function $\varphi\colon\R\to\R$ such
	that $\partial\varphi(x)=[0,1]$ for all $x\in\R$. Thus we get from
	Example~\ref{insolv} that the subgradient mapping $\partial\varphi$
	for this function is metrically regular around the point
	$(0,1/2)\in\gph\partial\ph$, while the second-order subdifferential
	inclusion \eqref{newton-inc} is not solvable for $d$ at this
	point.\vspace*{0.05in}
	
	Let us further reveal the class of functions where the {\em metric
		regularity} of $\partial\ph$ can replace the strong metric
	subregularity assumption in the solvability result of
	Theorem~\ref{solvability}(i). This issue is very appealing since
	metric regularity is a robust property, which is fully
	characterized---via the Mordukhovich criterion \eqref{cod-cr}---by
	the robust limiting coderivative (and hence by the second-order
	subdifferential \eqref{2nd} for the subgradient systems
	\eqref{subgra-inc}) enjoying comprehensive calculus rules and
	computation formulas discussed above. To proceed in this direction,
	we significantly use the subdifferential structure of
	\eqref{subgra-inc} with $F=\partial\ph$ in \eqref{cod-inc}. The
	following class of functions was introduced by Poliquin and
	Rockafellar \cite{Poliquin}.\vspace*{-0.05in}
	
	\begin{Definition}[\bf prox-regular functions]\label{prox-reg} A
		function $\varphi\colon\R^n\to\oR$ is {\sc prox-regular} at a point
		$x\in\dom\varphi$ for a subgradient $\ov\in\partial\varphi(\ox)$
		{with modulus $r>0$} if $\ph$ is l.s.c.\ around
		$\ox$ and there exists $\epsilon>0$  such that for all
		$x,u\in\mathbb{B}_\epsilon(\bar{x})$ with
		$|\varphi(u)-\varphi(\bar{x})|<\epsilon$ we have
		\begin{equation}\label{prox} \varphi(x)\ge\varphi(u)+\langle
			v,x-u\rangle-\frac{r}{2}\|x-u\|^2\;\mbox{ whenever
			}\;v\in\partial\varphi(u)\cap\mathbb{B}_\epsilon(\bar{v}).
		\end{equation} If this holds for all $\ov\in\partial\ph(\ox)$, $\ph$
		is said to be prox-regular at $\ox\in\dom\ph$.
	\end{Definition}\vspace*{-0.05in}
	
	In what follows we say that $\ph$ is {\em continuously prox-regular}
	at $\ox$ for $\ov$ (and just at $\ox)$ if it is simultaneously
	prox-regular and subdifferentially continuous according to
	Definitions~\ref{sub-cont} and \ref{prox-reg}. It is easy to see
	that if $\ph$ is continuously prox-regular at $\ox$ for
	$\ov\in\partial\ph(\ox)$, then the condition
	$|\varphi(u)-\varphi(\bar{x})|<\epsilon$ the definition of
	prox-regularity can be omitted. Furthermore, in this case the graph
	of $\partial\ph$ is locally closed around $(\ox,\ov)$. As discussed
	in the book by Rockafellar and Wets \cite{Rockafellar98}, the class
	of continuously prox-regular functions is fairly broad containing,
	besides $\mathcal{C}^2$-smooth functions, also functions of class
	$\mathcal{C}^{1,1}$, convex l.s.c.\ functions, lower-$\mathcal{C}^2$
	functions, strongly amenable functions, etc. This class plays a
	central role in second-order variational analysis and its
	applications; see the books by Rockafellar and Wets
	\cite{Rockafellar98} and by Mordukhovich \cite{Mor18} with the
	commentaries and references therein.\vspace*{0.03in}
	
	To establish the desired solvability theorem for the second-order
	subdifferential inclusions \eqref{newton-inc} with continuously
	prox-regular functions $\ph$, we need to recall yet another notion
	of generalized second-order differentiability taken from Rockafellar
	and Wets \cite[Chapter~13]{Rockafellar98}.
	
	Given $\ph\colon\R^n\to\oR$ with $\ox\in\dom\ph$, consider the
	family of second-order finite differences \begin{equation*}
		\Delta^2_\tau\varphi(\bar{x},v)(u):=\frac{\varphi(\bar{x}+\tau
			u)-\varphi(\bar{x})-\tau\langle v,u\rangle}{\frac{1}{2}\tau^2}
	\end{equation*} and define the {\em second subderivative} of
	$\varphi$ at $\ox$ for $v\in\R^n$ and $w\in\R^n$ by
	\begin{equation*} d^2\varphi(\ox,v)(w):=\liminf_{\tau\downarrow
			0\atop u\to w}\Delta^2_\tau\varphi(\ox,v)(u). \end{equation*} Then
	$\ph$ is said to be {\em twice epi-differentiable} at $\ox$ for $v$
	if for every $w\in\R^n$ and every choice $\tau_k\downarrow 0$ there
	exists a sequence $w^k\to w$ such that \begin{equation*}
		\frac{\varphi(\ox+\tau_k w^k)-\varphi(\ox)-\tau_k\langle
			v,w^k\rangle}{\frac{1}{2}\tau_k^2}\to d^2\varphi(\ox,v)(w)\;\mbox{
			as }\;k\to\infty. \end{equation*}
	
	Twice epi-differentiability has been recognized as an important
	property in second-order variational analysis with numerous
	applications to optimization; see the aforementioned monograph by
	Rockafellar and Wets and the recent papers by Mohammadi et al.
	\cite{mms,mms1,ms}. In particular, the latter papers develop a
	systematic approach to verify epi-differentiability via {\em
		parabolic regularity}, which is a major second-order property of
	extended-real-valued functions that goes far beyond the class of
	{\em fully amenable} functions investigated in Rockafellar and Wets
	\cite{Rockafellar98}.\vspace*{0.05in}
	
	Now we are ready to establish solvability of the second-order
	subdifferential inclusion \eqref{newton-inc} under merely metric
	regularity of the limiting subgradient mappings $\partial\ph$ for
	the class of continuously prox-regular and twice epi-differentiable
	functions.\vspace*{-0.05in}
	
	\begin{Theorem}[\bf solvability of generalized Newton systems under metric regularity]\label{twiceepi} Let $\ph\colon\R^n\to\oR$ be  {continuously  prox-regular} at $\ox$ for some $\ov\in\partial\ph(\ox)$. Suppose in addition that the subgradient mapping $\partial\varphi$ is metrically regular around $(\ox,\ov)$ and that one of two following properties holds:\\[1ex]
		{\bf(i)} $\varphi$ is a univariate function, i.e., $n=1$.\\[1ex]
		{\bf(ii)} $\varphi$ is twice epi-differentiable at $\ox$ for $\ov$.\\[1ex]
		Then there exists $d\in\R^n$ satisfying the second-order
		subdifferential system \eqref{newton-inc} at $(\ox,\ov)$.
	\end{Theorem}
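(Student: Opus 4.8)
The plan is to deduce the claim from Theorem~\ref{mainre} by showing that, under the present hypotheses, the subgradient mapping $\partial\ph$ is in fact \emph{strongly metrically subregular} at $(\ox,\ov)$; the solvability of \eqref{newton-inc} then follows at once, since by \eqref{2nd} the inclusion \eqref{newton-inc} is exactly the coderivative inclusion \eqref{cod-inc1} for $F=\partial\ph$. The closed-graph requirement of Theorem~\ref{mainre} comes for free, as continuous prox-regularity guarantees that $\gph\partial\ph$ is locally closed around $(\ox,\ov)$. To obtain strong metric subregularity I would use the two dual pointwise criteria recorded in Remark~\ref{Morcri}(iii): metric regularity of $\partial\ph$ around $(\ox,\ov)$ is equivalent, by the Mordukhovich criterion \eqref{cod-cr}, to $\ker D^*\partial\ph(\ox,\ov)=\{0\}$, whereas strong metric subregularity at $(\ox,\ov)$ is equivalent, by the Levy--Rockafellar criterion, to $\ker D\partial\ph(\ox,\ov)=\{0\}$ for the graphical derivative. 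Thus the whole proof reduces to the kernel transfer
\[
\ker D^*\partial\ph(\ox,\ov)=\{0\}\ \Longrightarrow\ \ker D\partial\ph(\ox,\ov)=\{0\},
\]
which is precisely where prox-regularity and one of the conditions (i), (ii) must enter.

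Under assumption (ii) I would exploit that, for a continuously prox-regular function, twice epi-differentiability at $\ox$ for $\ov$ yields proto-differentiability of $\partial\ph$ together with the Rockafellar--Wets representation $D\partial\ph(\ox,\ov)=\partial q$, where $q:=\tfrac12\,d^2\ph(\ox,\ov)$ is a proper l.s.c.\ function positively homogeneous of degree two. Hence $\ker D\partial\ph(\ox,\ov)$ is the set of critical points $\{w\mid 0\in\partial q(w)\}$ of $q$, and by Euler's relation for degree-two homogeneity any such $w$ satisfies $q(w)=0$. It then remains to exclude a nonzero critical point, and here I would feed in the coderivative information: using the hypomonotonicity of $\partial\ph$ furnished by prox-regularity (Definition~\ref{prox-reg}) to control the sign of the pairings $\la z,w\ra$ for $z\in D^*\partial\ph(\ox,\ov)(w)$, and combining this with $\ker D^*\partial\ph(\ox,\ov)=\{0\}$, one argues that a nonzero zero-level critical direction of $q$ would produce a nontrivial element of $\ker D^*\partial\ph(\ox,\ov)$, a contradiction.

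Under assumption (i) the argument becomes purely planar: $\gph\partial\ph\subset\R^2$ is, after the shift making $\partial\ph+r\,\mathrm{id}$ monotone, a locally maximal monotone curve, so its tangent and normal cones at $(\ox,\ov)$ are completely described by the one-sided slopes of this curve. I would check directly that $\ker D^*\partial\ph(\ox,\ov)=\{0\}$, i.e.\ that $N_{\gph\partial\ph}(\ox,\ov)$ contains no nonzero vertical vector, together with the monotone-curve geometry forces $N_{\gph\partial\ph}(\ox,\ov)$ to meet the vertical line through $(-\ov,0)$, producing a $d$ with $(-\ov,-d)\in N_{\gph\partial\ph}(\ox,\ov)$, which is exactly \eqref{newton-inc}; equivalently it forces $\ker D\partial\ph(\ox,\ov)=\{0\}$. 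The degenerate subcase $\ov=0$ is immediate, since $d=0$ always solves \eqref{newton-inc} there.

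The main obstacle is the kernel-transfer step, most visibly in case (ii): for a general closed-graph mapping there is no implication between triviality of $\ker D^*F$ and of $\ker DF$, and indeed Example~\ref{insolv} shows that metric regularity alone does not yield solvability. Prox-regularity is exactly the structural input that rules out the collapse occurring in that example, where $N_{\gph\partial\ph}$ degenerates to $\{0\}$ while $\ov\neq0$, and twice epi-differentiability is what makes the second subderivative $q$ available as the bookkeeping device linking the two kernels. Verifying rigorously that the hypomonotone coderivative estimates interact with the degree-two homogeneity of $q$ to preclude a nonzero critical point is the delicate part that I expect to occupy most of the work.
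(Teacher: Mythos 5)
Your overall architecture is exactly the paper's: both arguments note that continuous prox-regularity makes $\gph\partial\ph$ locally closed around $(\ox,\ov)$, reduce the claim to strong metric subregularity of $\partial\ph$ at $(\ox,\ov)$ via the Levy--Rockafellar criterion, translate the assumed metric regularity into $\ker D^*\partial\ph(\ox,\ov)=\{0\}$ by the Mordukhovich criterion \eqref{cod-cr}, and finish with Theorem~\ref{mainre} (through Theorem~\ref{solvability}(i)). The entire mathematical content of the theorem is therefore the kernel transfer you isolate, and that is precisely where your proposal stops being a proof. The paper closes this step by citing the derivative--coderivative inclusion of Rockafellar and Zagrodny \cite{rock-zag}: for a continuously prox-regular function that is either univariate (their Theorem~4.1) or twice epi-differentiable at $\ox$ for $\ov$ (their Theorem~1.1; see also \cite[Theorem~13.57]{Rockafellar98}) one has
\[
\big(D\partial\ph\big)(\ox,\ov)(u)\subset\big(D^*\partial\ph\big)(\ox,\ov)(u)\;\mbox{ for all }\;u\in\R^n,
\]
after which the implication $\ker D^*\partial\ph(\ox,\ov)=\{0\}\Rightarrow\ker D\partial\ph(\ox,\ov)=\{0\}$ is a one-line consequence.

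Your substitute for this inclusion does not close. In case (ii) you correctly recall that twice epi-differentiability gives $D\partial\ph(\ox,\ov)=\partial q$ with $q=\frac{1}{2}d^2\ph(\ox,\ov)$, and Euler's relation indeed forces $q(w)=0$ at a critical point; but the decisive step---that a \emph{nonzero} critical direction of $q$ ``would produce a nontrivial element of $\ker D^*\partial\ph(\ox,\ov)$''---is asserted, not argued. Hypomonotonicity only yields one-sided estimates $\la z,w\ra\ge-r\|w\|^2$ for coderivative pairs $z\in D^*\partial\ph(\ox,\ov)(w)$; it gives no mechanism for converting a tangent vector $(w,0)\in T_{\gph\partial\ph}(\ox,\ov)$ into a limiting normal vector of the form $(0,-w')$, and for a general closed graph no such mechanism exists (as you note yourself via Example~\ref{insolv}). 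What you are attempting to manufacture at this point \emph{is} the Rockafellar--Zagrodny inclusion, a genuinely deep result whose proof occupies that entire paper; the same applies to your planar sketch in case (i), which is their Theorem~4.1. So either you quote that result---and then your Euler-relation/hypomonotonicity apparatus becomes superfluous, since $0\in\partial q(w)=D\partial\ph(\ox,\ov)(w)\subset D^*\partial\ph(\ox,\ov)(w)$ contradicts the trivial coderivative kernel directly---or you owe a complete proof of the inclusion, which your proposal does not contain.
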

	\begin{proof} As mentioned above, the
		subdifferential graph $\partial\ph$ is locally closed around
		$(\ox,\ov)$. Let us show now that the imposed assumptions ensure
		that $\partial\ph$ is strongly metrically subregular at $(\ox,\ov)$.
		We are going to employ the Levy-Rockafellar criterion for the latter
		property telling us that it is equivalent to
		\begin{equation}\label{lev-rock}
			0\in\big(D\partial\ph\big)(\ox,\ov)(u)\Longrightarrow u=0
		\end{equation} as discussed in Remark~\ref{Morcri}(iii). To verify
		\eqref{lev-rock}, pick any $u\in\R^n$ such that
		$0\in(D\partial\varphi)(\bar{x},\bar{v})(u)$. Then the continuous
		prox-regularity of $\ph$ and each of the imposed assumptions (i) and
		(ii) ensure the following relationship between the graphical
		derivative and limiting coderivative of $\partial\ph$:
		\begin{equation}\label{der-cod}
			\big(D\partial\varphi\big)(\ox,\ov)(u)\subset\big(D^*\partial\varphi\big)(\ox,\ov)(u)\;\mbox{
				for all }\;u\in\R^n. \end{equation} In the univariate case (i)
		inclusion \eqref{der-cod} was proved by Rockafellar and Zagrodny
		\cite[Theorem~4.1]{rock-zag}, while the twice epi-differentiable
		case (ii) was done in the equivalent form in Theorem~1.1 of the
		latter paper; see also Rockafellar and Wets
		\cite[Theorem~13.57]{Rockafellar98}. Since the subdifferential
		mapping $\partial\ph$ is assumed to be metrically regular at
		$(\ox,\ov)$, we get by using \eqref{der-cod} and the Mordukhovich
		criterion \eqref{cod-cr} that \begin{equation*}
			0\in\big(D\partial\varphi\big)(\bar{x},\bar{v})(u)\Longrightarrow
			0\in\big(D^*\partial\varphi\big)(\bar{x},\bar{v})(u)\Longrightarrow
			u=0, \end{equation*} which ensures by \eqref{lev-rock} that
		$\partial\ph$ is strongly metrically subregular at $(\ox,\ov)$.
		Using finally the result of Theorem~\ref{solvability}(i), we arrive
		at the claimed solvability and thus complete the proof.
	\end{proof} 
	
	Note that Theorem~\ref{twiceepi} concerns solvability of the
	second-order subdifferential inclusion \eqref{newton-inc} {\em at}
	the chosen point $(\ox,\ov)\in\gph\partial\ph$. What about {\em
		robust} solvability of \eqref{newton-inc} {\em around} the reference
	point in the line of Theorem~\ref{solvability}(ii)? This is
	discussed in the following remark.\vspace*{-0.05in}
	
	\begin{Remark}[\bf robust solvability under metric
		regularity]\label{robust-newton} {\rm Theorem~\ref{solvability}(ii)
			tells us that the {\em strong} metric regularity of $\partial\ph$
			around $(\ox,\ov)$ ensures the robust solvability of
			\eqref{newton-inc} around this point. But it has been recognized
			that the strong metric regularity of subgradient mappings
			$\partial\ph$ is {\em equivalent} to merely the metric regularity of
			them for major subclasses of continuously prox-regular functions
			$\ph\colon\R^n\to\oR$ with the {\em conjecture} that it holds for
			the {\em entire class} of such functions at {\em local minimizers}
			of $\ph$; see Drusvyatskiy {et al.}
			\cite[Conjecture~4.7]{dmn}. This is largely discussed in the
			mentioned paper by Drusvyatskiy  {et al.}, and now
			we recall some results from that paper. Indeed, the equivalence
			clearly holds (and not only for local minimizers of $\ph$) for
			${\cal C}^2$-smooth functions and for l.s.c.\ convex functions due
			the fundamental Kenderov theorem on maximal monotone operators
			\cite{kender}. The claimed equivalence is also valid for a broad
			class of functions given by $\ph(x)=\ph_0(x)+\dd_\Omega(x)$, where
			$\ph_0$ is a ${\cal C}^2$-smooth function, and where $\dd_\Omega$ is the
			indicator function of a polyhedral convex set; see Dontchev and
			Rockafellar \cite{dr}. Yet another large setting of such an
			equivalence is revealed in Drusvyatskiy et al.
			\cite[Theorem~4.13]{dmn} for continuously prox-regular functions
			$\ph$ with $0\in\partial\ph(\ox)$ under the additional condition
			that the second-order subdifferential $\partial^2\ph(\ox,0)$ is {\em
				positive-semidefinite} in the sense that \begin{equation*} \la
				v,u\ra\ge 0\;\mbox{ for all }\;v\in\partial^2\ph(\ox,0)(u),\quad
				u\ne 0. \end{equation*} Note that the requirement that $\ox$ is a
			local minimizer of $\ph$ is {\em essential} for the validity of this
			conjecture even for twice epi-differentiable functions of class
			${\cal C}^{1,1}$ with piecewise linear and directionally
			differentiable gradients; see Example~\ref{ex-kummer} in the next
			section.} \end{Remark} \vspace*{-0.2in}

\section{Generalized Newton Method for ${\cal C}^{1,1}$ Gradient
Equations}\label{sec:newtonC11}\vspace*{-0.05in}\setcounter{equation}{0}

	In this section we propose and justify a generalized Newton
	algorithm to solve gradient systems of type \eqref{gra}, where
	$\ph\colon\R^n\to\R$ is a function of class ${\cal C}^{1,1}$ around
	a given point $\ox$. To begin with, let us formulate the {\em
		semismooth$^*$ property} of set-valued mappings
	$F\colon\R^n\tto\R^m$ introduced recently by Gfrerer and Outrata
	\cite{Helmut}. This property is used here for the justification of
	local superlinear convergence of our Newton-type algorithm to solve
	gradient equations \eqref{gra} and then to solve subgradient
	inclusions \eqref{subgra-inc} in the subsequent sections of the
	paper.
	
	To formulate the semismooth$^*$ property of set-valued mappings,
	recall first the notion of the {\em directional limiting normal
		cone} to a set $\Omega\subset\R^s$ at $\oz\in\Omega$ in the direction
	$d\in\R^s$ introduced by Ginchev and Mordukhovich \cite{gin-mor} by
	implementing the limiting process \begin{equation}\label{dir-nc}
		N_\Omega(\oz;d):=\big\{v\in\R^s\;\big|\;\exists\,t_k\dn 0,\;d_k\to
		d,\;v_k\to v\;\mbox{ with }\;v_k\in\widehat{N}_\Omega(\oz+t_k
		d_k)\big\}. \end{equation} It is obvious that \eqref{dir-nc} reduces
	to the limiting normal cone \eqref{lnc} for $d=0$. Given a
	set-valued mapping $F\colon\R^n\tto\R^m$ and a point
	$(\ox,\oy)\in\gph F$, the {\em directional limiting coderivative} of
	$F$ at $(\ox,\oy)$ in the direction $(u,v)\in\R^n\times\R^m$ is
	defined by Gfrerer \cite{g} as \begin{equation*}
		D^*F\big((\ox,\oy);(u,v)\big)(v^*):=\big\{u^*\in\R^n\;\big|\;(u^*,-v^*)\in
		N_{\text{gph}\,F}\big((\ox,\oy);(u,v)\big)\big\}\;\mbox{ for all
		}\;v^*\in\R^m \end{equation*} by using the directional normal cone
	\eqref{dir-nc} to the graph of $F$ at $(\ox,\oy)$ in the direction
	$(u,v)$. The aforementioned semismooth$^*$ property of $F$ is now
	formulated as follows.\vspace*{-0.05in}
	
	\begin{Definition}[\bf semismooth$^*$ property of set-valued
		mappings]\label{semi*} A mapping $F\colon\R^n\tto\R^m$ is {\sc
			semismooth$^*$} at $(\bar{x},\bar{y})\in\gph F$ if whenever
		$(u,v)\in\R^n\times\R^m$ we have the equality \begin{equation*}
			\langle u^*,u\rangle=\langle v^*,v\rangle\;\mbox{ for all
			}\;(v^*,u^*)\in\gph D^*F\big((\ox,\oy);(u,v)\big) \end{equation*}
		via the graph of the directional limiting coderivative of $F$ at
		$(\ox,\oy)$ in all the directions $(u,v)$. \end{Definition}
	
	Semismooth$^*$ mappings were introduced and largely investigated in
	Gfrerer and Outrata \cite{Helmut}, where this property is verified
	for any mapping $F\colon\R^n\tto\R^m$ with the graph represented was
	a union of finitely many closed and convex sets, for normal cone
	mappings generated by convex polyhedral sets. Other equivalent
	descriptions and properties of semismooth$^*$ mappings are given in
	Mordukhovich and Sarabi \cite{BorisEbrahim}. If $F\colon\R^n\to\R^m$
	is locally Lipschitzian around $\ox$ and directionally
	differentiable at this point, then its semismooth$^*$ property
	reduces to the classical semismoothness. Although the
	standard semismooth property of locally Lipschitzian mappings
	involving directional derivatives has been conventionally used in
	the literature for the semismooth Newton method, some important
	results were obtained without the directional differentiability
	assumption; see, e.g., Kummer \cite{Kummer}, Meng et al. \cite{msz},
	and Sun \cite{Sun2001}. Such a relaxed semismooth property of
	single-valued locally Lipschitzian mappings is known as {\em
		$G$-semismoothness}. Note that, in contrast to $G$-semismoothness,
	the semismooth$^*$ property is defined for arbitrary set-valued
	mappings, and it is used for subgradient ones in this paper. But
	even for single-valued Lipschitzian mappings, the semismooth$^*$
	definition based on coderivatives may have some advantages in
	comparison with the $G$-semismooth one due to perfect coderivative
	calculus rules. \vspace*{0.05in}
	
	Now we are ready to present and discuss the major assumptions used in the rest of the paper for the design and justification of our generalized Newton algorithms to solve the gradient and subgradient systems. The following assumptions are formulated for the general subgradient inclusions \eqref{subgra-inc} at a reference point $\ox$ satisfying \eqref{subgra-inc}.\\[1ex]
	{\bf(H1)} Given a subgradient $\ov\in\partial\ph(\ox)$, the second-order subdifferential inclusion \eqref{newton-inc} is robustly solvable around $(\ox,\ov)$, i.e., there is a neighborhood $U\times V$ of $(\ox,\ov)$ such that for every $(x,v)\in\gph\partial\varphi\cap(U\times V)$ there exists a direction $d\in\R^n$ satisfying \eqref{newton-inc}.\\[1ex]
	{\bf(H2)} The subgradient mapping $\partial\varphi$ is metrically regular around $(\ox,\ov)$.\\[2ex]
	{\bf(H3)} The subgradient mapping $\partial\varphi$ is
	semismooth$^*$ at $(\ox,\ov)$.\vspace*{0.05in}
	
	Observe that in the case where $\ph$ is of class ${\cal C}^{1,1}$
	around $\ox$, we have $v=\nabla\ph(x)$ and the second-order
	subdifferential system \eqref{newton-inc} is written as
	\begin{equation}\label{2ndC11}
		-\nabla\varphi(x)\in\partial^2\varphi(x)(d). \end{equation} The
	robust solvability assumption (H1) has been discussed in
	Section~\ref{sec:solvN} with presenting sufficient conditions for
	its fulfillment; see Theorems~\ref{solvability}(ii), \ref{twiceepi}
	and Remark~\ref{robust-newton}. Note that the strong metric
	regularity of $\partial\ph$ around $(\ox,\ov)$ for subdifferentially
	continuous functions $\ph$ ensures that both assumptions (H1) and
	(H2) are satisfied. However, this is just a sufficient condition for
	the validity of (H1) and (H2). The following example borrowed from
	Klatte and Kummer \cite[Example~BE.4]{Klatte}, where it was
	constructed for different purposes, presents a function
	$\ph\colon\R^2\to\R$ of class ${\cal C}^{1,1}$ (i.e., certainly
	being continuously prox-regular), which is twice epi-differentiable
	on the entire space $\R^2$ with the semismooth, metrically regular,
	but not strongly metrically regular gradient mapping $\nabla\ph$
	around the point in question. It is worth mentioning that the given
	example illustrates that assuming $\ox$ to be a local minimizer of
	$\ph$ is {\em essential} to the validity of Conjecture~4.7 from
	Drusvyatskiy et al. \cite{dmn}; see
	Remark~\ref{robust-newton}.\vspace*{-0.05in}
	
	\begin{Example}[\bf all assumptions hold without strong metric
		regularity]\label{ex-kummer} {\rm Let $z:=(x,y)\in\R^2$ be written
			in the polar coordinates $(r,\theta)$ by \begin{equation*}
				z=r(\cos\theta+i\sin\theta). \end{equation*} We now describe the
			function $\varphi\colon\R^2\to\R$ and its partial derivatives on the
			eight cones \begin{equation*} C(k):=\left\{z:=(r\cos\theta,r\sin
				\theta)\;\Big|\;\theta\in\left[(k-1)\frac{\pi}{4},k\frac{\pi}{4}\right],\;r\ge
				0\right\},\quad k=1,\ldots,8. \end{equation*} The analytic
			expressions of $\ph$, $\nabla_x\ph$, and $\nabla_y\ph$ are collected
			in the table: 
			
			\medskip 
			\begin{center} \begin{tabular}{ || p{1em} | c | c| c|
						c| | } \hline
					$k$ & $C(k)$ & $\varphi(z)$ & $\nabla_x\varphi(z)$ &  $\nabla_y\varphi(z)$\\[0.5em] \hline\hline
					1 & $C(1)$ & $y(y-x)$ & $-y$ & $2y-x$\\
					2 & $C(2)$ & $x(y-x)$ & $-2x+y$ & $x$\\
					3 & $C(3)$ & $x(y+x)$ & $2x+y$  & $x$ \\
					4 & $C(4)$ & $-y(y+x)$ & $-y$ & $-2y-x$\\
					5 & $C(5)$ & $y(y-x)$ & $-y$  & $2y-x$\\
					6 & $C(6)$ & $x(y-x)$ & $-2x+y$ & $x$\\
					7 & $C(7)$ & $x(y+x)$ & $2x+y$ & $x$\\
					8 & $C(8)$ & $-y(y+x)$ & $-y$  & $-2y-x$\\
					\hline \end{tabular}
			\end{center}
			
			\medskip \noindent
			The function $\varphi$ and its gradient have the following properties:\\[1ex]
			{\bf(a)} The function $\varphi$ is of {\em class ${\cal C}^{1,1}$}
			with $\nabla\varphi$ being {\em piecewise linear} on $\R^2$. This is
			an obvious consequence of the definition. Thus $\ph$ is
			{\em continuously prox-regular} on $\R^2$.\\[1ex]
			{\bf(b)} The mapping $\nabla\varphi$ is {\em metrically regular} around $(0,0)$. Indeed, it is observed by Klatte and Kummer \cite[Example~BE.4]{Klatte} that the inverse mapping $(\nabla\ph)^{-1}$ is Lipschitz-like (pseudo-Lipschitz, Aubin) around $(\ox,\ov)$ with $\ox=(0,0)$ and $\ov=(0,0)$. As well known (see, e.g., Mordukhovich \cite[Theorem~3.2(ii)]{Mor18}), the latter property is equivalent to the metric regularity of the mapping $\nabla\ph$ around $\ox$.\\[1ex]
			{\bf(c)} The mapping $\nabla\varphi$ is {\em directionally differentiable} on $\R^2$, which follows from Facchinei and Pang \cite[Lemma~4.6.1]{JPang}. Hence $\varphi$ is {\em twice epi-differentiable} by Rockafellar and Wets \cite[Theorems~9.50(b), 13.40]{Rockafellar98}.\\[1ex]
			{\bf(d)} The mapping $\nabla\varphi$ is {\em semismooth} on $\R^2$ due to its piecewise linearity. This fact can be found, e.g., in  {Facchinei and Pang \cite[Proposition~7.4.6]{JPang}} and Ulbrich \cite[Proposition~2.26]{Ul}.\\[1ex]
			{\bf(e)} The mapping $\nabla\ph$ is {\em not strongly metrically
				regular} around $(\ox,\ov)$ with $\ox=(0,0)$ and $\ov=(0,0)$. To
			verify it, we proceed accordingly to Definition~\ref{met-reg}(i) and
			let $\vartheta$ be an arbitrary localization of
			$(\nabla\varphi)^{-1}$ around $\ov$ for $\ox$, i.e., such that
			\begin{equation*} \gph\vartheta=\gph(\nabla\varphi)^{-1}\cap(V
				\times U) \end{equation*} for some neighborhoods $V$ of $\ov$ and
			$U$ of $\ox$. Find $\epsilon,\gg>0$ with $\B_\epsilon(\ox)\subset U$
			and $\B_\gamma(\ov)\subset V$ and then pick $t\in\R$ such that
			$0<t<{\rm min}\{\gamma,\epsilon/\sqrt{5}\}$. This shows that
			$(t,0)\in\B_\gamma(\ov)\subset V$ with \begin{equation*}
				(0,t)=\left(r\cos\theta,r\sin\theta\right)\;\mbox{ for
				}\;r=t\;\mbox{ and }\;\theta=\frac{\pi}{2}, \end{equation*} and thus
			$\nabla\varphi(0,t)=(t,0)$. Furthermore, we have \begin{equation*}
				(-2t,-t)=\left(r\cos\theta,r\sin\theta\right)\;\mbox{for}\;r=\sqrt{5}t\;\mbox{and}\;\theta\in\Big[\pi,\frac{5\pi}{4}\Big],\;\cos\theta=-\frac{2}{\sqrt{5}},\;\sin\theta=-\frac{1}{\sqrt{5}},
			\end{equation*} which tell us that $\nabla\varphi(-2t,-t)=(t,0)$ and
			$(0,t),(-2t,-t)\in\B_\epsilon(\ox)\subset U$. This yields
			\begin{equation*}
				\big((t,0),(0,t)\big),\big((t,0),(-2t,-t)\big)\in\gph(\nabla\varphi)^{-1}\cap\big(\B_\gamma(\ov)\times\B_\epsilon(\ox)\big)\subset\gph\vartheta.
			\end{equation*} The latter means that there exists no localization
			$\vartheta$ of $(\nabla \varphi)^{-1}$ around $(\bar{v},\bar{x})$
			which is single-valued, and hence the mapping $\nabla\varphi$ is not
			strongly metrically regular around $(\ox,\ov)$.\vspace*{0.03in}
			
			Remembering finally that the metric regularity is a robust property
			and therefore holds for all points in some neighborhood of $(0,0)$,
			we deduce from (a)--(d) and Theorem~\ref{twiceepi} that all the
			imposed assumptions (H1)--(H3) are satisfied without the fulfillment
			of the strong metric regularity of $\nabla\ph$ around $(0,0)$ as
			shown in (e). It is easy to see that $\ox=(0,0)$ is a stationary
			point of $\ph$ while not its local minimizer. Thus this example does
			not contradict the conjecture from Drusvyatskiy et al. \cite{dmn}
			discussed in Remark~\ref{robust-newton}.}
	\end{Example}\vspace*{-0.05in}
	
	Now we are ready to formulate a generalized Newton algorithm to
	solve the gradient equation $\nabla\ph(x)=0$, labeled as \eqref{gra}
	in Section~\ref{intro}, where $\ph$ is of class ${\cal C}^{1,1}$
	around the reference point. This algorithm is based on the
	second-order subdifferential/generalized Hessian \eqref{2nd} of the
	function $\ph$ in question.\vspace*{-0.03in}
	
	\begin{Algorithm}[\bf Newton-type algorithm for ${\cal C}^{1,1}$ functions]\label{NM} {\rm Do the following:\\[1ex]
			{\bf Step~0:} Choose a starting point $x^0\in\R^n$ and set $k=0$.\\[1ex]
			{\bf Step~1:} If $\nabla\varphi(x^k)=0$, stop the algorithm. Otherwise move to Step~2.\\[1ex]
			{\bf Step~2:} Choose $d^k\in\R^n$ satisfying
			\begin{equation}\label{alC11} -\nabla\varphi(x^k)\in\partial\big\la
				d^k,\nabla\ph\big\ra(x^k). \end{equation} {\bf Step~3:} Set
			$x^{k+1}$ given by \begin{equation*} x^{k+1}:=x^k+d^k\;\mbox{ for
					all }\;k=0,1,\ldots. \end{equation*} {\bf Step~4:} Increase $k$ by
			$1$ and go to Step~1.} \end{Algorithm}
	
	The major step and novelty of Algorithm~\ref{NM} is the generalized
	Newton system \eqref{alC11} expressed in terms of the limiting
	subdifferential of the scalarized gradient mapping. Due to the
	coderivative scalarization formula \eqref{scal} and the second-order
	construction \eqref{2nd} we have \begin{equation*} \partial\big\la
		d^k,\nabla\ph\big\ra(x^k)=\big(D^*\nabla\ph\big)(x^k)(d^k)=\partial^2\ph(x^k)(d^k),
	\end{equation*} i.e., the iteration system \eqref{alC11} agrees with
	the second-order subdifferential  {inclusion
		\eqref{newton-inc} whose solvability} was discussed in
	Section~\ref{sec:solvN}; see Theorem~\ref{twiceepi}. The computation
	of the second-order subdifferential for ${\cal C}^{1,1}$ functions
	reduces to that of the (first-order) limiting subdifferential of the
	classical gradient mapping; {this significantly}
	simplifies the numerical implementation. Note also that, according
	to definition \eqref{lim-cod}, the direction $d^k$ in \eqref{alC11}
	can be found from \begin{equation*}
		\big(-\nabla\varphi(x^k),-d^k\big)\in
		N\big((x^k,\nabla\varphi(x^k));\gph\nabla\varphi\big).
	\end{equation*}
	
	The main goal for the rest of this section is to show that the
	metric regularity and the semismooth$^*$ properties of $\nabla\ph$
	imposed in (H2) and (H3) ensure the {\em convergence} of iterates
	$x^k\to\ox$ with {\em superlinear rate}. To proceed in this way, we
	present the following three lemmas of their own interest. The first
	lemma gives us a necessary and sufficient condition for the metric
	regularity of continuous single-valued mappings $F\colon\R^n\to\R^m$
	via their limiting coderivatives. Since $F$ is single-valued, we are
	talking about its metric regularity around $\ox$ instead of
	$(\ox,F(\ox))$.\vspace*{-0.05in}
	
	\begin{Lemma}[\bf yet another characterization of metric
		regularity]\label{metricforF} Let $F\colon\R^n\to\R^m$ be continuous
		around $\ox$. Then it is metrically regular around this point if and
		only if there exists $c>0$ and a neighborhood $U$ of $\ox$ such that
		we have the estimate \begin{equation}\label{H2metrical} \|v\|\ge
			c\|u\|\;\mbox{ for all }\;v\in D^*F(x)(u),\;x\in U,\;\mbox{ and
			}\;u\in\R^m. \end{equation} \end{Lemma}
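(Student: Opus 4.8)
The plan is to derive both implications from the Mordukhovich criterion \eqref{cod-cr}. This criterion is applicable at every point $x$ near $\ox$ because the continuity of $F$ makes $\gph F$ locally closed around $(\ox,F(\ox))$ and turns each $D^*F(x)$ into a well-defined positively homogeneous multifunction (a cone in the $u$-variable, since normal cones are cones). The two directions split naturally into an easy pointwise argument and a limiting argument that supplies the uniformity.

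For the sufficiency direction I would use the estimate \eqref{H2metrical} only at the single point $x=\ox$. If $u\in\R^m$ satisfies $0\in D^*F(\ox)(u)$, then $v=0$ is an admissible element of $D^*F(\ox)(u)$, so \eqref{H2metrical} gives $0=\|v\|\ge c\|u\|$ and hence $u=0$. Thus $\ker D^*F(\ox)=\{0\}$, and the Mordukhovich criterion \eqref{cod-cr} yields metric regularity of $F$ around $\ox$. No neighborhood information is needed for this implication.

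The necessity direction is the substantive part, and I would argue by contradiction to extract the uniform constant $c>0$ and neighborhood $U$. Suppose \eqref{H2metrical} fails for every $c>0$ and every neighborhood of $\ox$. Then there are sequences $x_k\to\ox$, $u_k\in\R^m$, and $v_k\in D^*F(x_k)(u_k)$ with $\|v_k\|<\tfrac1k\|u_k\|$; in particular $u_k\ne 0$. Using positive homogeneity of the coderivative I would normalize to $\|u_k\|=1$, so that $\|v_k\|\to 0$, and pass to a subsequence with $u_k\to\ou$, $\|\ou\|=1$, by compactness of the unit sphere. Rewriting coderivative membership as $(v_k,-u_k)\in N_{\gph F}(x_k,F(x_k))$ and invoking continuity of $F$ to get $(x_k,F(x_k))\to(\ox,F(\ox))$, the outer semicontinuity (robustness) of the limiting normal cone lets me pass to the limit $(0,-\ou)\in N_{\gph F}(\ox,F(\ox))$, i.e.\ $0\in D^*F(\ox)(\ou)$ with $\ou\ne 0$. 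This contradicts $\ker D^*F(\ox)=\{0\}$, which holds by \eqref{cod-cr} since $F$ is metrically regular around $\ox$.

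The main obstacle is manufacturing a constant $c$ and neighborhood $U$ that work \emph{simultaneously} at all nearby points, whereas the kernel condition from \eqref{cod-cr} lives only at $\ox$. The contradiction argument sidesteps the need for any quantitative coderivative criterion: the closedness and outer semicontinuity of $N_{\gph F}$, together with continuity of $F$ (which both closes the graph and forces $F(x_k)\to F(\ox)$), transport the limiting normals generated at the perturbed points $x_k$ back to the base point $\ox$, where the kernel condition is available. I would take care that the normalization is legitimate and that $\ou\ne 0$ survives the limit, both of which are guaranteed by keeping $\|u_k\|=1$.
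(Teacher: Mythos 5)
Your proof is correct, and while the sufficiency half coincides with the paper's (both read off $\ker D^*F(\ox)=\{0\}$ from \eqref{H2metrical} at $x=\ox$ and invoke the criterion \eqref{cod-cr}), your necessity half takes a genuinely different route. The paper does not argue by contradiction: it quotes the uniform neighborhood characterization of metric regularity for continuous single-valued maps (Theorem~1.54 in Mordukhovich's book), which directly supplies a constant $c>0$ and a neighborhood $U$ on which $\|v\|\ge c\|u\|$ for all $v\in\Hat D^*F(x)(u)$, i.e.\ in terms of the \emph{regular} coderivative \eqref{reg-cod}; it then upgrades this to the limiting coderivative by approximating any $v\in D^*F(x)(u)$ with regular coderivative elements $v_k\in\Hat D^*F(x_k)(u_k)$, $x_k\to x$, $u_k\to u$, and passing to the limit in the inequality. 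You instead sidestep any quantitative neighborhood criterion: you combine the purely pointwise kernel condition at $\ox$ with normalization, compactness of the unit sphere, and the outer semicontinuity of $N_{\gph F}$ (legitimate here since continuity of $F$ makes $\gph F$ locally closed and forces $(x_k,F(x_k))\to(\ox,F(\ox))$). The trade-off: your argument is shorter and needs only facts already stated in the paper plus the standard robustness of limiting normals, but it is non-constructive---it yields \emph{some} pair $(c,U)$ with no quantitative tie to the modulus of metric regularity, whereas the paper's route inherits $c$ from the regularity modulus. Note also that the robustness property you invoke is itself established by the same diagonal approximation of limiting normals by regular normals that the paper uses explicitly, so at bottom the two proofs rest on the same machinery packaged differently.
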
 \begin{proof} If $F$ is
		metrically regular and continuous around $\ox$, then it follows from
		the book by Mordukhovich \cite[Theorem~1.54]{Mordukhovich06} that
		there are $c>0$ and an (open) neighborhood $U$ of $\bar{x}$ with
		\begin{equation}\label{Frechet} \|v\|\ge c\|u\|\;\mbox{ for all
			}\;v\in\widehat{D}^*F(x)(u),\;x\in U,\;\mbox{ and }\;u\in\R^m
		\end{equation} in terms of the regular coderivative \eqref{reg-cod}.
		Fix any $x\in U$, $u\in\R^m$, and an element $v\in D^*F(x)(u)$ from
		the limiting coderivative \eqref{lim-cod}. The continuity of $F$
		ensures that the graphical set $\gph F$ is closed around $(x,F(x))$.
		Then using Corollary~2.36 from the aforementioned book gives us
		sequences $x_k\to x$, $u_k\to u$, and $v_k\to v$ as $k\to\infty$
		such that $v_k\in\widehat{D}^*F(x_k)(u_k)$, and thus $x_k\in U$ for
		all $k\in\N$ sufficiently large. It follows from \eqref{Frechet}
		that \begin{equation*} \|v_k\|\ge c\|u_k\|\;\mbox {for large
			}\;k\in\N. \end{equation*} Letting $k\to\infty$, we arrive at the
		estimate $\|v\|\ge c\|u\|$. On the other hand, the fulfillment of
		\eqref{H2metrical} immediately yields the metric regularity of $F$
		around $\ox$ by the coderivative criterion \eqref{cod-cr}.
	\end{proof}\vspace*{0.03in}
	
	The next lemma presents an equivalent description of
	semismoothness$^*$ for Lipschitzian gradient mappings. This result
	follows from the combination of Proposition~3.7 in Gfrerer and
	Outrata \cite{Helmut} and Theorem~13.52 in Rockafellar and Wets
	\cite{Rockafellar98} due to the symmetry of generalized Hessian
	matrices; see Mordukhovich and Sarabi
	\cite[Proposition~2.4]{BorisEbrahim} for more
	details.\vspace*{-0.05in}
	
	\begin{Lemma}[\bf equivalent description of
		semismoothness$^*$]\label{equisemi} Let $\varphi\colon\R^n\to\R$ be
		of class ${\cal C}^{1,1}$ around $\bar{x}$. Then the gradient
		mapping $\nabla\varphi$ is semismooth$^*$ at $\ox$ if and only if
		\begin{equation*}
			\nabla\varphi(x)-\nabla\varphi(\ox)-\partial^2\varphi(x)(x-\ox)\subset
			o(\|x-\ox\|), \end{equation*} which means that for every
		$\epsilon>0$ there exists $\delta>0$ such that \begin{equation*}
			\|\nabla\varphi(x)-\nabla\varphi(\bar{x})-v\|\le\epsilon\|x-\ox\|\;\mbox{
				for all }\;v\in\partial^2\varphi(x)(x-\ox)\;\mbox{ and
			}\;x\in\B_\delta(\ox). \end{equation*} \end{Lemma}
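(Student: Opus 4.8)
The plan is to cite the two external results as black boxes and then glue them together using the symmetry of the generalized Hessian, rather than to re-derive the semismooth$^*$ condition from scratch. First I would recall from Gfrerer and Outrata \cite[Proposition~3.7]{Helmut} the general characterization of semismoothness$^*$ for a set-valued mapping $F$ in terms of its limiting coderivative: $F$ is semismooth$^*$ at $(\ox,\oy)$ precisely when the ``o(\|x-\ox\|)''-type estimate holds with $D^*F(x)$ playing the role of the linearizing operator along the direction $x-\ox$. Applying this to $F:=\nabla\ph$, which is single-valued and locally Lipschitzian since $\ph\in{\cal C}^{1,1}$, and using the second-order identity $\partial^2\ph(x)(u)=(D^*\nabla\ph)(x)(u)$ from \eqref{2nd}, converts the abstract coderivative statement into one phrased through $\partial^2\ph(x)(x-\ox)$.

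The subtle point—and where the symmetry remark enters—is that the coderivative $D^*\nabla\ph(x)(u)$ a priori corresponds to the \emph{adjoint} action of the generalized Hessian, so the raw application of \cite{Helmut} would produce the displayed estimate with $\partial^2\ph(x)$ replaced by its adjoint/transpose counterpart. To identify the two I would invoke Rockafellar and Wets \cite[Theorem~13.52]{Rockafellar98}, which guarantees that for ${\cal C}^{1,1}$ functions the generalized Hessian is a set of \emph{symmetric} matrices (the self-adjointness of second-order subdifferentials of twice-Lipschitz functions). Symmetry makes $\partial^2\ph(x)$ coincide with its adjoint, so the coderivative estimate and the desired estimate involving $\partial^2\ph(x)(x-\ox)$ are literally the same. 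This is exactly the reductive step flagged in the lemma's preamble and in Mordukhovich and Sarabi \cite[Proposition~2.4]{BorisEbrahim}.

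Finally I would record the standard unwinding of the little-$o$ notation: the inclusion $\nabla\ph(x)-\nabla\ph(\ox)-\partial^2\ph(x)(x-\ox)\subset o(\|x-\ox\|)$ is by definition the assertion that for every $\ve>0$ there is $\delta>0$ with $\|\nabla\ph(x)-\nabla\ph(\ox)-v\|\le\ve\|x-\ox\|$ whenever $v\in\partial^2\ph(x)(x-\ox)$ and $x\in\B_\delta(\ox)$, so the two formulations in the statement are tautologically equivalent and require no separate argument. The main obstacle is conceptual rather than computational: one must be careful that the direction along which semismoothness$^*$ is tested in \cite{Helmut} is $x-\ox$ (the secant direction), and that the Lipschitz continuity of $\nabla\ph$ is what legitimizes passing between the regular and limiting coderivatives and guarantees the generalized Hessian is nonempty and compact-valued; once the symmetry identification is in place, the equivalence follows by directly matching the two cited characterizations.
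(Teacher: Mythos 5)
Your proposal takes essentially the same route as the paper, whose entire proof is the one-sentence combination of Proposition~3.7 in Gfrerer and Outrata \cite{Helmut} with the symmetry of generalized Hessian matrices from Theorem~13.52 in Rockafellar and Wets \cite{Rockafellar98}, deferring details to Proposition~2.4 of Mordukhovich and Sarabi \cite{BorisEbrahim}. Your identification of where the symmetry is actually needed---to make the adjoint action appearing in the coderivative $D^*\nabla\varphi(x)(u)=\partial^2\varphi(x)(u)$ coincide with the direct action of the generalized Hessian in the Gfrerer--Outrata estimate---is exactly the gluing step the paper's citation implicitly relies on.
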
\vspace*{-0.03in}
	
	The last lemma establishes a useful subadditivity property of the
	limiting coderivative of single-valued and locally Lipschitzian
	mappings.\vspace*{-0.05in}
	
	\begin{Lemma}[\bf subadditivity of
		coderivatives]\label{phanracoderivative} If $F\colon\R^n\to\R^m$ is
		locally Lipschitzian around $\ox$, then we have the inclusion
		\begin{equation}\label{cod-subadd} D^*F(\ox)(u_1+u_2)\subset
			D^*F(\ox)(u_1)+D^*F(\ox)(u_2)\;\mbox{ for all }\;u_1,u_2\in\R^m.
	\end{equation} \end{Lemma}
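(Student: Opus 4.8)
The plan is to reduce the coderivative subadditivity \eqref{cod-subadd} to the well-known inclusion form of the sum rule for the limiting subdifferential, exploiting the scalarization formula \eqref{scal}, which is available precisely because $F$ is locally Lipschitzian around $\ox$.

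First I would invoke \eqref{scal} to rewrite each set appearing in \eqref{cod-subadd} as a limiting subdifferential of a scalarized function. For any $u\in\R^m$ we have $D^*F(\ox)(u)=\partial\langle u,F\rangle(\ox)$, so that the left-hand side of \eqref{cod-subadd} becomes $\partial\langle u_1+u_2,F\rangle(\ox)$, while the two summands on the right become $\partial\langle u_1,F\rangle(\ox)$ and $\partial\langle u_2,F\rangle(\ox)$, respectively.

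Next I would use the linearity of the inner product in its first argument to observe the pointwise identity $\langle u_1+u_2,F\rangle=\langle u_1,F\rangle+\langle u_2,F\rangle$ as real-valued functions on a neighborhood of $\ox$. Since $F$ is locally Lipschitzian around $\ox$, both scalarized functions $\langle u_1,F\rangle$ and $\langle u_2,F\rangle$ are themselves locally Lipschitzian around $\ox$.

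The key step is then the subdifferential sum rule for the limiting subdifferential from the book by Mordukhovich \cite{Mor18}: for functions $f,g$ that are locally Lipschitzian around $\ox$ one has $\partial(f+g)(\ox)\subset\partial f(\ox)+\partial g(\ox)$, the qualification condition in terms of singular subdifferentials being automatic in the Lipschitzian case. Applying this with $f:=\langle u_1,F\rangle$ and $g:=\langle u_2,F\rangle$, and then translating back through \eqref{scal}, yields exactly \eqref{cod-subadd}. I do not expect a genuine obstacle here: the proof is short, and the only point deserving attention is to confirm that the local Lipschitz continuity of $F$ simultaneously legitimates the scalarization formula and the inclusion form of the sum rule, both of which are standard tools of the limiting calculus.
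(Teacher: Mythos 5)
Your proposal is correct and follows essentially the same route as the paper's own proof: scalarize via \eqref{scal}, split $\langle u_1+u_2,F\rangle=\langle u_1,F\rangle+\langle u_2,F\rangle$, apply the limiting subdifferential sum rule for locally Lipschitzian functions (the paper cites Mordukhovich \cite[Theorem~2.33]{Mordukhovich06}), and scalarize back. No gaps; the only difference is the citation source for the sum rule.
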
 \begin{proof}
	It follows from the
		scalarization formula \eqref{scal} that \begin{equation*}
			D^*F(\ox)(u_1+u_2)=\partial\langle u_1+u_2,F\rangle(\ox).
		\end{equation*} The subdifferential sum rule from Mordukhovich
		\cite[Theorem~2.33]{Mordukhovich06} and the aforementioned scalarization formula
		ensure that
		\begin{equation*} \partial\langle
			u_1+u_2,F\rangle(\ox)\subset\partial\langle
			u_1,F\rangle(\ox)+\partial\langle
			u_2,F\rangle(\ox)=D^*F(\ox)(u_1)+D^*F(\ox)(u_2), \end{equation*}
		which therefore completes the proof of the lemma. \end{proof}
	
	The next theorem is the main result of this section that establishes
	the {\em local superlinear convergence} of Algorithm~\ref{NM} under
	the imposed assumptions.\vspace*{-0.05in}
	
	\begin{Theorem}[\bf local convergence of the Newton-type algorithm
		for ${\cal C}^{1,1}$ functions]\label{localconverge} Let $\ox$ be a
		solution to \eqref{gra} for which assumptions {\rm(H1)--(H3)} are
		satisfied. Then there exists a neighborhood $U$ of $\ox$ such that
		for all $x^0\in U$ Algorithm~{\rm\ref{NM}} is well-defined and
		generates a sequence $\{x^k\}$ that converges $Q$-superlinearly to
		$\ox$, i.e., we have \begin{equation*}
			\lim_{k\to\infty}\frac{\|x^{k+1}-\bar{x}\|}{\|x^k-\bar{x}\|}=0.
	\end{equation*} \end{Theorem}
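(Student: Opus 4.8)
The plan is to run a standard Newton-convergence scheme in which the three preceding lemmas play complementary roles: Lemma~\ref{metricforF} (from (H2)) converts smallness of coderivative images into smallness of directions, Lemma~\ref{equisemi} (from (H3)) supplies the superlinear residual estimate, and Lemma~\ref{phanracoderivative} lets me manipulate the second-order inclusions additively. First I would fix the data. Since $\ph$ is $\mathcal{C}^{1,1}$ around $\ox$ and $\nabla\ph(\ox)=0$, choose a neighborhood on which $\nabla\ph$ is Lipschitz with some constant $L$. By (H2) and Lemma~\ref{metricforF} there are $c>0$ and a neighborhood $U_0$ of $\ox$ with $\|v\|\ge c\|u\|$ for every $v\in\partial^2\ph(x)(u)$ and $x\in U_0$. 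Assumption (H1) guarantees that the Newton direction $d^k$ in \eqref{alC11} exists whenever $x^k$ lies in a suitable neighborhood of $\ox$, so the algorithm does not break down. Applying the estimate of Lemma~\ref{metricforF} to the Newton inclusion $-\nabla\ph(x^k)\in\partial^2\ph(x^k)(d^k)$ together with the Lipschitz bound $\|\nabla\ph(x^k)\|\le L\|x^k-\ox\|$ yields $\|d^k\|\le\frac1c\|\nabla\ph(x^k)\|\le\frac{L}{c}\|x^k-\ox\|$, which keeps the iterates inside the chosen neighborhood once the superlinear contraction below is in force.

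The heart of the argument is the per-step estimate $\|x^{k+1}-\ox\|=o(\|x^k-\ox\|)$. Writing $e^k:=x^k-\ox$, so that $x^{k+1}-\ox=e^k+d^k$, I would exhibit a small element of $\partial^2\ph(x^k)(e^k+d^k)$ and then invoke Lemma~\ref{metricforF} to bound $\|e^k+d^k\|$ by $\frac1c$ times its norm. To build such an element, apply the subadditivity of Lemma~\ref{phanracoderivative} at $x^k$ to the decomposition $d^k=(e^k+d^k)+(-e^k)$, which gives $\partial^2\ph(x^k)(d^k)\subset\partial^2\ph(x^k)(e^k+d^k)+\partial^2\ph(x^k)(-e^k)$. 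Hence the Newton vector $-\nabla\ph(x^k)\in\partial^2\ph(x^k)(d^k)$ splits as $-\nabla\ph(x^k)=p^k+q^k$ with $p^k\in\partial^2\ph(x^k)(e^k+d^k)$ and $q^k\in\partial^2\ph(x^k)(-e^k)$, so that $\|e^k+d^k\|\le\frac1c\|p^k\|=\frac1c\|\nabla\ph(x^k)+q^k\|$. It then remains to show $\|\nabla\ph(x^k)+q^k\|=o(\|e^k\|)$, and this is exactly where the semismoothness$^*$ estimate of Lemma~\ref{equisemi} enters: that lemma (with $\nabla\ph(\ox)=0$) forces every element of $\partial^2\ph(x^k)(e^k)$ to lie within $\epsilon\|e^k\|$ of $\nabla\ph(x^k)$, and by the symmetry of the second-order subdifferential in the $\mathcal{C}^{1,1}$ setting the same estimate transfers to the opposite direction $-e^k$, giving $\|\nabla\ph(x^k)+q^k\|\le\epsilon\|e^k\|$.

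The main obstacle is precisely this last transfer of direction. The semismoothness$^*$ characterization in Lemma~\ref{equisemi} is stated for the direction $x^k-\ox$, whereas the subadditive decomposition of the Newton inclusion unavoidably produces a coderivative image in the opposite direction $-(x^k-\ox)$; purely Lipschitzian bounds on $q^k$ yield only a linear rate, so the superlinear conclusion genuinely requires the refined estimate to hold for $-e^k$ as well. I would address this by exploiting that, for $\mathcal{C}^{1,1}$ functions, the relevant values of $\partial^2\ph(x^k)$ are governed by the symmetric generalized Hessian (the ingredient behind Lemma~\ref{equisemi}), so that the semismoothness$^*$ residual is insensitive to the sign of the direction. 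Granting this, I obtain $\|x^{k+1}-\ox\|\le\frac{\epsilon}{c}\|x^k-\ox\|$ for every prescribed $\epsilon>0$ once $x^k$ is close enough to $\ox$. Choosing $\epsilon$ with $\epsilon/c<1$ first shows that the iterates remain in the neighborhood and converge to $\ox$ (well-definedness), and then letting $\epsilon\downarrow 0$ as $x^k\to\ox$ yields $\|x^{k+1}-\ox\|/\|x^k-\ox\|\to 0$, which is the claimed local superlinear convergence.
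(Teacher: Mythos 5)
Your overall scheme is the same as the paper's: the paper also combines (H2) via Lemma~\ref{metricforF}, the subadditivity of Lemma~\ref{phanracoderivative}, and the semismoothness$^*$ estimate of Lemma~\ref{equisemi}, packaging them into the single inclusion $\nabla\ph(x)-\nabla\ph(\ox)+G_\ph(x,u)\subset G_\ph(x,x+u-\ox)+o(\|x-\ox\|)\B$ for $G_\ph(x,u):=-\partial^2\ph(x)(-u)$, and then defers the induction to Theorem~10.7 of Klatte and Kummer, whereas you run the induction explicitly. You have also put your finger on exactly the delicate point: the subadditive splitting of the Newton inclusion produces an element $q^k\in\partial^2\ph(x^k)(-(x^k-\ox))$, while Lemma~\ref{equisemi} controls only $\partial^2\ph(x^k)(x^k-\ox)$. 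Note that the paper engineers its decomposition (and the minus sign built into $G_\ph$) precisely so that Lemma~\ref{equisemi} is invoked in the direction $x-\ox$.

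The gap is in how you bridge this. Your stated justification --- that ``the second-order subdifferential is insensitive to the sign of the direction'' for ${\cal C}^{1,1}$ functions --- is false: $\partial^2\ph(x)(\cdot)$ is positively homogeneous but not odd. For $\ph(x)=\frac{1}{2}x|x|$ on $\R$ one has $\nabla\ph(x)=|x|$ and $\partial^2\ph(0)(1)=[-1,1]$ while $\partial^2\ph(0)(-1)=\{-1,1\}$, so $\partial^2\ph(0)(-u)\ne-\partial^2\ph(0)(u)$; the same asymmetry appears in the computations of Example~\ref{smr-fail}. The ``symmetry of generalized Hessians'' behind Lemma~\ref{equisemi} means $A=A^*$ for limiting Hessian matrices, not oddness in $u$, so it cannot be cited for the transfer as such. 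The transfer is nevertheless true, and can be repaired with material already in the paper: by the formula recorded in Remark~\ref{semialg},
\begin{equation*}
\partial^2\ph(x)(u)\subset\co\partial^2\ph(x)(u)=\big\{A^*u\;\big|\;A\in\co\Bar\nabla^2\ph(x)\big\},
\end{equation*}
and since the right-hand side is odd in $u$ (the matrices act linearly), we get $-\partial^2\ph(x^k)(-e^k)\subset\co\partial^2\ph(x^k)(e^k)$. Lemma~\ref{equisemi} confines the set $\partial^2\ph(x^k)(e^k)$ to the ball of radius $\epsilon\|e^k\|$ centered at $\nabla\ph(x^k)-\nabla\ph(\ox)$; a ball is convex, so the same confinement holds for the convex hull, hence for $-q^k$. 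This yields exactly your estimate $\|\nabla\ph(x^k)+q^k\|\le\epsilon\|e^k\|$, after which your contraction and superlinearity argument is correct and, in fact, more self-contained than the paper's appeal to Klatte--Kummer.
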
 \begin{proof} Define the set-valued
		mapping $G_\ph\colon\R^n\times\R^n\tto\R^n$ by \begin{equation*}
			G_\ph(x,u):=-\big(D^*\nabla\varphi\big)(x)(-u)=-\partial^2\varphi(x)(-u)\;\mbox{
				for all }\;x,u\in\R^n. \end{equation*} Assumption (H1) allows us to
		construct the sequence of iterates $\{x^k\}$ in Algorithm~\ref{NM}.
		Using (H2) and its characterization from Lemma~\ref{metricforF}, we
		find $c>0$ and a neighborhood $U$ of $\ox$ with \begin{equation*}
			\|v\|\ge c\|u\|\;\mbox{ for all }\;v\in G_\varphi(x,u),\;x\in
			U,\;\mbox{ and }\;u\in\R^n. \end{equation*} Let us now verify the
		inclusion \begin{equation}\label{G-inc}
			\nabla\ph(x)-\nabla\ph(\ox)+G_\ph(x,u)\subset
			G_\ph(x,x+u-\ox)+o(x-\ox)\B \end{equation} for the above vectors
		$x,u$. Indeed, taking any $v_1\in G_\ph(x,u)$, i.e.,
		$-v_1\in\partial^2\ph(x)(-u)$, and using the subadditivity inclusion
		from Lemma~\ref{phanracoderivative} lead us to \begin{equation*}
			\partial^2\ph(x)(-u)\subset\partial^2\ph(x)(-x-u+\ox)+\partial^2\ph(x)(x-\ox)
		\end{equation*} and thus ensure the existence of
		$v_2\in\partial^2\ph(x)(-x-u+\ox)$ such that
		$-v_1-v_2\in\partial^2\ph(x)(x-\ox)$. The semismoothness$^*$
		assumption (H3) and its equivalent description in
		Lemma~\ref{equisemi} tell us that \begin{equation*}
			\lim_{x\to\ox}\frac{\|\nabla\ph(x)-\nabla\ph(\ox)+v_1+v_2\|}{\|x-\ox\|}=0,
		\end{equation*} which therefore verifies \eqref{G-inc}. All of this
		allows us to proceed similarly to the proof of Theorem~10.7 in
		Klatte and Kummer \cite{Klatte} and thus find a neighborhood $U$ of
		$\ox$ such that, whenever the starting point $x^0\in U$ is selected,
		Algorithm~\ref{NM} generates a well-defined sequence of iterates
		$\{x^k\}$, which converges $Q$-superlinearly to $\ox$. This
		therefore completes proof of the theorem. \end{proof}
	
	Assumption (H1) has been already discussed and obviously cannot be
	removed or relaxed; otherwise Algorithm~\ref{NM} is not
	well-defined. Now we present two examples showing that assumptions
	(H2) and (H3) are essential for the convergence (not even talking
	about its $Q$-superlinear rate) of Algorithm~\ref{NM}. Let us start
	with the semismoothness$^*$ assumption (H3).\vspace*{-0.05in}
	
	\begin{Example}[\bf semismooth$^*$ property is essential for
		convergence]\label{semi-conv} {\rm Consider the Lipschitz continuous
			function on $\R$ given by \begin{equation*}
				\psi(x):=\left\{\begin{array}{ll}
					\disp x^2\sin\frac{1}{x}+2x&{\rm if}\;\;x\ne 0,\\
					0&{\rm if}\;\;x=0, \end{array} \right. \end{equation*} which was
			used in Jiang et al. \cite{defeng} to show that the semismooth
			Newton method for solving the equation $\psi(x)=0$ fails to locally
			converge to $\ox:=0$. Consider further the ${\cal C}^{1,1}$ function
			\begin{equation*} \ph(x):=\int_{0}^{x}\psi(t)dt,\quad x\in\R,
			\end{equation*} with $\nabla\ph(x)=\psi(x)$ on $\R$, and hence
			$\nabla\ph(\ox)=0$. As shown in Mordukhovich and Sarabi
			\cite[Example~4.5]{BorisEbrahim}, the mapping $\nabla\ph$ is not
			semismooth$^*$ at $\ox$ and iterations \eqref{alC11} constructed to
			compute tilt-stable local minimizers of $\ph$ (see below) do not
			locally converge to $\ox$.} \end{Example}\vspace*{-0.05in}
	
	The next example reveals that assumption (H2) cannot be improved by
	relaxing the metric regularity property to the metric subregularity
	or even to the strong metric subregularity of the gradient mapping
	$\nabla\varphi$ at $\ox$ in order to keep the convergence of
	iterates in Algorithm~\ref{NM}.\vspace*{-0.05in}
	
	\begin{Example}[\bf convergence failure under strong metric
		subregularity]\label{smr-fail} {\rm Consider the function $\varphi\colon\R^2\to\R$ defined by
			by \begin{equation}\label{phC11}
				\varphi(x,y):=\frac{1}{2}x|x|+\frac{1}{2}y|y|\;\mbox{ for all
				}\;(x,y)\in\R^2. \end{equation} It is clear that the function $\ph$
			is of class ${\cal C}^{1,1}$ around $\oz:=(0,0)$ with
			$\nabla\varphi(x,y)=(|x|,|y|)$ for all $z:=(x,y)\in\R^2$ and
			$\nabla\varphi(\oz)=0$. The simple computation tells us that
			\begin{equation*}
				\big(D\nabla\varphi\big)(x,y)(u_1,u_2)=\begin{cases}
					\big\{\big(|u_1|,|u_2|\big)\;\big|\;u_1,u_2\in\R\big\}&\text{if}\quad x=0,\;y=0,\\
					\big\{\big(u_1\,\text{sgn}(x),|u_2|\big)\;\big|\;u_1,u_2\in\R\big\}&\text{if}\quad x\ne 0,\;y=0,\\
					\big\{\big(|u_1|,u_2\,\text{sgn}(y)\big)\big|\;u_1,u_2\in\R\big\}&\text{if}\quad x=0,\;y \ne 0,\\
					\big\{\big(u_1\,\text{sgn}(x),u_2\,\text{sgn}(y)\big)\;\big|\;u_1,u_2\in\R\big\}&\text{if}
					\quad x\ne 0,\;y\ne 0. \end{cases} \end{equation*} It follows from
			the Levy-Rockafellar criterion that the mapping $\nabla\varphi$ is
			strongly metrically subregular at any point $(x,y)\in\gph\nabla\ph$.
			Furthermore, $\nabla\varphi$ is semismooth$^*$ at $\oz$ since it is
			piecewise linear on $\R^2$, and thus assumption (H3) is satisfied.
			The fulfillment of (H1) is proved in Theorem~\ref{solvability}. Let
			us now show that the sequence of iterates $\{z^k\}$ generated by
			Algorithm~\ref{NM} does not converge to $\oz$. Indeed, fix any $r>0$
			and pick an arbitrary starting point $z^0$ in the form
			$z^0:=(0,r)\in\B_r(\oz)$. To run the algorithm, we need to find
			$d^0\in\R^2$ such that \begin{equation}\label{Step1Ex}
				-\nabla\varphi(z^0)\in\partial^2\varphi(z^0)(d^0). \end{equation}
			Using the second-order subdifferential \eqref{2nd} for the function
			$\ph$ from \eqref{phC11} gives us \begin{equation*}
				\partial^2\varphi(z^0)(u_1,u_2)=\begin{cases}
					\big\{(\alpha u_1,u_2)\;\big|\;\alpha\in[-1,1]\big\}&\text{if}\qquad u_1\ge 0,\;u_2\in\R,\\
					\big\{(\alpha u_1,u_2)\;\big|\;\alpha\in\{-1,1\big\}&\text{if}\qquad
					u_1<0,\;u_2\in\R. \end{cases} \end{equation*} This shows that the
			direction $d^0=(1,-r)$ satisfies inclusion \eqref{Step1Ex}. Put
			further $z^1:=z^0+d^0=(1,0)$ and find a direction $d^1\in\R^2$
			satisfying the inclusion \begin{equation}\label{Step2Ex}
				-\nabla\varphi(z^1)\in\partial^2\varphi(z^1)(d^1). \end{equation}
			Computing again the second-order subdifferential brings us to the
			expression \begin{equation*} \partial^2\varphi(z^1)(u_1,u_2)=
				\begin{cases}
					\big\{(u_1,\alpha u_2)\;\big|\;\alpha\in[-1,1]\big\}&\text{if}\qquad u_1\in\R,\;u_2\ge 0,\\
					\big\{(u_1,\alpha
					u_2)\;\big|\;\alpha\in\{-1,1\}\big\}&\text{if}\qquad
					u_1\in\R,\;u_2<0 \end{cases} \end{equation*} and then verifies that
			the direction $d^1=(-1,r)$ satisfies the inclusion in
			\eqref{Step2Ex}. Thus $z^2:=z^1+d^1=(0,r)$. Continuing this process,
			we construct the sequence of iterates $\{z^k\}$ such that
			$z^{2k}=z^0$ and $z^{2k+1}=z^1$ for all $k\in\N$. It is obvious that
			$\{z^k\}$ does not converge to $\bar{z}$.}
	\end{Example}\vspace*{-0.05in}
	
	There are several Newton-type methods to solve Lipschitzian
	equations $f(x)=0$ that apply to gradient systems \eqref{gra} with
	$f(x):=\nabla\ph(x)$, where $\ph$ is of class ${\cal C}^{1,1}$. Such
	methods are mainly based of various {\em generalized directional
		derivatives} and can be found, e.g., in Klatte and Kummer
	\cite{Klatte}, Pang \cite{p90}, Hoheisel et al. \cite{HungBoris},
	Mordukhovich and Sarabi \cite{BorisEbrahim}, and the references
	therein. It is beyond the scope of this paper to discuss their
	detailed relationships with Algorithm~\ref{NM}. However, let us
	compare the proposed algorithm with the {\em semismooth Newton
		method} to solve equation \eqref{gra}, which is based on the
	generalized Jacobian by Clarke \cite{cl} for the mapping
	$f=\nabla\ph$.\vspace*{-0.05in}
	
	\begin{Remark}[\bf comparing Algorithm~\ref{NM} with
		the semismooth Newton method and its variants]\label{semialg}
		{\rm In the setting of \eqref{gra}, the semismooth
			Newton method constructs the iterations
			\begin{equation}\label{iterationN}
				x^{k+1}=x^{k}-(A^k)^{-1}\nabla\varphi(x^k),\quad k=0,1,\ldots,
			\end{equation} where for each $k$ a nonsingular matrix $A^k$ is
			taken from the {\em generalized Jacobian} of $\nabla\ph$ at $x=x^k$,
			which is expressed for functions $\ph$ of class ${\cal C}^{1,1}$
			around $x$ via the convex hull $A^k\in{\rm co}\Bar\nabla^2\ph(x^k)$
			of the {\em limiting Hessian} matrices (or \textit{Bouligand
				Jacobian}) defined by \begin{equation}\label{lim-hes}
				\Bar\nabla^2\ph(x):=\Big\{\lim_{m\to\infty}\nabla^2\ph(u_m)\;\Big|\;u_m\to
				x,\;u_m\in Q_\ph\Big\},\quad x\in\R^n, \end{equation} where $Q_\ph$
			stands for the set on which $\ph$ is twice differentiable. It
			follows from the classical Rademacher theorem that \eqref{lim-hes}
			is a nonempty compact in $\R^{n\times n}$. Observe that
			\begin{equation*} {\rm co}\,\big[\partial\la
				u,\nabla\ph\ra(x)\big]=\co\partial^2\ph(x)(u)=\big\{A^*u\;\big|\;A\in{\rm
					co}\Bar\nabla^2\ph(x)\big\},\quad u\in\R^n, \end{equation*} which
			tells us that, in contrast to our algorithm \eqref{alC11}, the
			semismooth method \eqref{iterationN} employs the {\em convex hull}
			of the corresponding set. A serious disadvantage of
			\eqref{iterationN} is that it requires the {\em invertibility} of
			all the matrices from ${\rm co}\Bar\nabla^2\ph(x^k)$; otherwise the
			semismooth Newton algorithm \eqref{iterationN} is simply {\em not
				well-defined}. Observe that nothing like that is required to run our
			Algorithm~\ref{NM}. Indeed, the invertibility assumption is even
			more restrictive than the strong metric regularity of $\nabla\ph$
			around $\ox$, which is also not required in the imposed assumptions
			(H1)--(H3), that ensure the well-posedness and local superlinear
			convergence of Algorithm~\ref{NM}. {To overcome this
				disadvantage of the semismooth Newton method, one of the possible
				ideas developed by Sun \cite{Sun2001} is to take matrices from
				$\Bar\nabla^2\ph(x)$ instead of ${\rm co}\Bar\nabla^2\ph(x)$, i.e.,
				to construct the iterations as in \eqref{iterationN} with
				$A^k\in\Bar\nabla^2\ph(x^k)$. This method requires the
				nonsingularity of  $\Bar\nabla^2\ph(\ox)$, which is weaker than the
				nonsingularity of the Clarke generalized Jacobian. However, there
				are some drawbacks of this requirement in comparison with our
				assumption that $\nabla\varphi$ is metrically   regular around
				$\ox$. The first observation to make is that, to the best of our
				knowledge, calculus rules for the Bouligand Jacobian is more limited
				in comparison with the extensive ones available for coderivatives.
				Meanwhile, the metric regularity can be fully characterized by the
				Mordukhovich coderivative criterion for general set-valued mappings.
				Furthermore, the Bouligand-based semismooth Newton method clearly
				addresses just ${\cal C}^{1,1}$ functions, while Algorithm~\ref{NM}
				in our approach can be seen as a {\em bridge} to develop a generalized
				Newton method for solving subgradient inclusions $0\in
				\partial\varphi(x)$, where $\varphi$ is a prox-regular function as
				in Section \ref{sec:prox}, and where $\varphi= f + g$ with a
				$\mathcal{C}^2$-smooth function $f$ and a prox-regular function $g$
				as in Section~\ref{sec:Newcomposite}. Observe that our standing
				assumptions are formulated and used for {\em set-valued mappings without
					any directional differentiability requirements} or the like.}
			
			To conclude this remark}, we present a specific {\em one-dimensional example, where all the imposed assumptions (H1)--(H3) of Theorem~\ref{localconverge} are satisfied while the directional differentiability is not required. Indeed, consider the cost function $\varphi: [-1,1]\to \R$ defined by
			$$
			\varphi(x):= \int_{-1}^{x}H(t)dt, \quad x\in [-1,1],
			$$
			with the integrand $H:[-1,1]\to\R$ having the following properties:
			\begin{itemize}
				\item[\bf (i)] $H$ is Lipschitz continuous and metrically regular around $\ox:=0$ with $H(\ox)=0$.
				\item[\bf (ii)] $H$ is monotone on $(-1,1)$.
				\item[\bf (iii)] $	\|H(x) - H(\bar{x})+v\| =o(\|x-\bar{x}\|) \quad \text{for all }\; x \; \text{near }\; \bar{x}, \; v \in DH(x)(\bar{x}-x)$.
				\item[\bf (iv)] $H$ is not directionally differentiable at $\bar{x}$.
			\end{itemize}
			An explicitly constructed function $H$ of this type can be found in Hoheisel et al. \cite[Example~4.11]{HungBoris}. 
			It is clear that $\varphi$ is a differentiable convex function on $(-1,1)$ by (ii) and the fact that $\nabla\varphi(x) = H(x)$ for all $x \in (-1,1)$. Thus the metric regularity of the gradient mapping $\nabla\ph$ around $\bar{x}$ is equivalent to its strong metric regularity around $\bar{x}$ due to Arag\'on Artacho and Geoffroy \cite[Proposition~3.8]{ag}. This verifies the assumptions (H1) and (H2) in our paper. By Proposition~2.4 from Mordukhovich and Sarabi \cite{BorisEbrahim} and property  (iii), we conclude that $\nabla\varphi$ is semismooth$^*$ at $\bar{x}$. Meanwhile, property (iv) tells us that $\nabla\varphi$ is not directionally differentiable at $\bar{x}$.}
	\end{Remark}\vspace*{-0.05in}
	
	Next we consider a particular case of the gradient stationary
	equation \eqref{gra}, where $\ox$ is a {\em local minimizer} of
	$\ph$. Moreover, our attention is paid to the remarkable class of
	local minimizers exhibiting the property of {\em tilt stability}
	introduced by Poliquin and Rockafellar \cite{Poli}. In the case of
	tilt-stable minimizers, Algorithm~\ref{NM} was developed by
	Mordukhovich and Sarabi \cite{BorisEbrahim}. The results established
	below improve those from the latter paper. First we recall the
	notion of tilt-stable local minimizers for the general case of
	extended-real-valued functions
	$\ph\colon\R^n\to\oR$.\vspace*{-0.05in}
	
	\begin{Definition}[\bf tilt-stable local
		minimizers]\label{def:tilt} Given $\varphi\colon\R^n\to\oR$, a
		point $\ox\in\dom\varphi$ is a {\sc tilt-stable local minimizer} of
		$\varphi$ if there exists a number $\gamma>0$ such that the mapping
		\begin{equation}\label{tilt} M_\gamma\colon v \mapsto{\rm
				argmin}\big\{\varphi(x)-\langle v,x\rangle\;\big|\;x
			\in\B_\gamma(\ox)\big\} \end{equation} is single-valued and
		Lipschitz continuous on some neighborhood of $0\in\R^n$ with
		$M_\gamma(0)=\{\ox\}$. \end{Definition}\vspace*{-0.03in} This notion
	was largely investigated and characterized in second-order
	variational analysis with many applications to constrained
	optimization. Besides the seminal paper by Poliquin and Rockafellar
	\cite{Poli}, we refer the reader to Chieu et al. \cite{ChieuNghia},
	Drusvyatskiy and Lewis \cite{dl}, Drusvyatskiy et al. \cite{dmn},
	Gfrerer and Mordukhovich \cite{gm}, Mordukhovich \cite{Mor18},
	Mordukhovich and Nghia \cite{MorduNghia}, Mordukhovich and
	Rockafellar \cite{mr} and the bibliographies therein. Some of these
	characterizations are used in the following
	theorem.\vspace*{-0.05in}
	
	\begin{Theorem}[\bf Newton-type method for tilt-stable minimizers of ${\cal C}^{1,1}$ functions]\label{thm:tilt} Let $\varphi\colon\R^n\to\R$ be of class ${\cal C}^{1,1}$ around a given point $\ox$, which is a tilt-stable local minimizer of $\varphi$. Then there is a neighborhood $U$ of $\ox$ such that the following assertions hold:\\[1ex]
		{\bf(i)} For any $x\in U$ there exists a direction $d\in\R^n$
		satisfying the inclusion \begin{equation}\label{directiondescent}
			-\nabla\varphi(x)\in\partial\la d,\nabla\varphi\ra(x).
		\end{equation} Furthermore, we have that
		$\langle\nabla\varphi(x),d\rangle<0$ whenever $x\ne\ox$ and that for
		each $c\in(0,1)$ there is $\delta>0$ ensuring the fulfillment of the
		inequality \begin{equation}\label{linesearch}
			\varphi(x+td)\le\varphi(x)+ct\langle\nabla\varphi(x),d\rangle\;\mbox{
				for all }\;t\in(0,\delta). \end{equation} {\bf(ii)} If in addition
		the gradient mapping $\nabla\varphi$ is semismooth$^*$ at $(\ox,0)$,
		then Algorithm~{\rm\ref{NM}} is well-defined for any starting point
		$x^0\in U$ and generates a sequence $\{x^k\}$ that $Q$-superlinearly
		converges to $\ox$, while the sequence of the function values
		$\{\varphi(x^k)\}$ $Q$-superlinearly converges to $\ph(\ox)$, and
		the sequence of the gradient values $\{\nabla\varphi(x^k)\}$
		$Q$-superlinearly converges to $0$. \end{Theorem}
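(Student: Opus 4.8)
The guiding idea is to reduce everything to the regularity machinery already in place. Since $\ph$ is $\mathcal{C}^{1,1}$ and therefore continuously prox-regular, the result of Drusvyatskiy and Lewis \cite{dl} identifies tilt stability of $\ox$ with the strong metric regularity of $\nabla\ph$ around $(\ox,0)$. For part~(i) I would first invoke Theorem~\ref{strongsol} with $F=\nabla\ph$: strong metric regularity yields a neighborhood $U$ of $\ox$ (shrunk so that $\nabla\ph(U)$ lands in the companion neighborhood of $0$) on which the coderivative inclusion $-\nabla\ph(x)\in D^*\nabla\ph(x)(d)=\partial\la d,\nabla\ph\ra(x)$ is solvable, which is exactly \eqref{directiondescent}. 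This settles the existence of the direction $d$.

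For the descent assertion I would use the Poliquin--Rockafellar characterization \cite{Poli} of tilt stability as positive-definiteness of the second-order subdifferential $\partial^2\ph(\ox,0)$. Combined with the scalarization identity $\partial^2\ph(x)(u)=\{Au\mid A\in\Bar\nabla^2\ph(x)\}$ valid for $\mathcal{C}^{1,1}$ functions, this says precisely that every limiting Hessian $A\in\Bar\nabla^2\ph(\ox)$ is positive-definite. Because $\Bar\nabla^2\ph$ is outer semicontinuous with compact values, positive-definiteness persists with a uniform modulus $\kappa>0$ on a possibly smaller $U$. Writing the selected direction as $-\nabla\ph(x)=Ad$ for some $A\in\Bar\nabla^2\ph(x)$ then gives $\la\nabla\ph(x),d\ra=-\la Ad,d\ra\le-\kappa\|d\|^2$; and since $\partial^2\ph(x)(0)=\partial\la 0,\nabla\ph\ra(x)=\{0\}$, the case $d=0$ forces $\nabla\ph(x)=0$, i.e.\ $x=\ox$ by uniqueness of the stationary point under strong metric regularity. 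Hence $\la\nabla\ph(x),d\ra<0$ for $x\ne\ox$. The line-search inequality \eqref{linesearch} is then the classical Armijo estimate: the $\mathcal{C}^1$ expansion $\ph(x+td)=\ph(x)+t\la\nabla\ph(x),d\ra+o(t)$ gives $\ph(x+td)-\ph(x)-ct\la\nabla\ph(x),d\ra=(1-c)t\la\nabla\ph(x),d\ra+o(t)$, whose leading term is strictly negative, so the inequality holds for all small $t>0$.

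For part~(ii) I would check that the three standing hypotheses hold: strong metric regularity delivers both the robust solvability (H1) via Theorem~\ref{strongsol} and the metric regularity (H2), while (H3) is the added semismoothness$^*$ assumption. Theorem~\ref{localconverge} then immediately yields a neighborhood on which Algorithm~\ref{NM} is well-defined and produces iterates with $\|x^{k+1}-\ox\|/\|x^k-\ox\|\to0$. It remains to transfer this rate to the function and gradient values. For the gradient, Lipschitz continuity of $\nabla\ph$ with constant $L$ gives $\|\nabla\ph(x^{k+1})\|\le L\|x^{k+1}-\ox\|$, while strong metric subregularity (a consequence of strong metric regularity) gives the lower bound $\|\nabla\ph(x^k)\|\ge\ell^{-1}\|x^k-\ox\|$; dividing and using the superlinear rate of $\{x^k\}$ forces $\|\nabla\ph(x^{k+1})\|/\|\nabla\ph(x^k)\|\to0$. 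For the function values I would derive the two-sided estimate $\tfrac{\kappa}{2}\|x-\ox\|^2\le\ph(x)-\ph(\ox)\le\tfrac{L}{2}\|x-\ox\|^2$ near $\ox$ --- the upper bound from Lipschitzness of $\nabla\ph$ together with $\nabla\ph(\ox)=0$, and the lower bound from the strong monotonicity of $\nabla\ph$ induced by the uniform positive-definiteness above --- so that the ratio is controlled by $(\|x^{k+1}-\ox\|/\|x^k-\ox\|)^2\to0$.

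The main obstacle I anticipate is the descent step: one must pass from tilt stability, which is naturally phrased as positive-definiteness of $\partial^2\ph$ \emph{at} $\ox$, to a \emph{uniform} positive-definiteness on a full neighborhood, and then correctly match the selected Newton direction with a positive-definite limiting Hessian $A\in\Bar\nabla^2\ph(x)$. Once this is in hand, together with the equivalences linking tilt stability, strong metric regularity, and (H1)--(H3), the rest is bookkeeping built on Theorems~\ref{strongsol} and \ref{localconverge}.
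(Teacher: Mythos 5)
Your overall architecture matches the paper's proof almost step for step: Drusvyatskiy--Lewis \cite{dl} converts tilt stability into strong metric regularity of $\nabla\ph$ around $(\ox,0)$; Theorem~\ref{strongsol} (with the same shrinking of $U$ so that $\nabla\ph(U)$ fits into the companion neighborhood) gives solvability of \eqref{directiondescent}; part~(ii) is exactly the paper's verification that strong metric regularity yields (H1) and (H2) while semismoothness$^*$ is (H3), followed by Theorem~\ref{localconverge}; and the transfer of the superlinear rate to $\{\ph(x^k)\}$ and $\{\nabla\ph(x^k)\}$ uses the same pattern of a Lipschitz upper bound against a quadratic-growth/strong-monotonicity lower bound (the paper cites Izmailov--Solodov and Mordukhovich--Nghia for the function values and Poliquin--Rockafellar strong monotonicity for the gradients, whereas you use strong metric subregularity for the gradient lower bound --- both are valid, and your squared ratio for the function values is actually the cleaner form of the paper's estimate).

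The one genuine flaw is in your descent step: the identity $\partial^2\ph(x)(u)=\{Au\mid A\in\Bar\nabla^2\ph(x)\}$ is \emph{false} for general ${\cal C}^{1,1}$ functions. Only the two inclusions
$\{Au\mid A\in\Bar\nabla^2\ph(x)\}\subset\partial^2\ph(x)(u)\subset\{Au\mid A\in\co\Bar\nabla^2\ph(x)\}$
hold, and both can be strict; the paper's own Example~\ref{smr-fail} shows this, since for $\ph(x,y)=\frac{1}{2}x|x|+\frac{1}{2}y|y|$ one has $\partial^2\ph(0,r)(u_1,u_2)=\{(\al u_1,u_2)\mid\al\in[-1,1]\}$ for $u_1\ge 0$, a set far larger than the finitely many limiting-Hessian images (see also Remark~\ref{semialg}). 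Consequently you cannot write the selected Newton direction as $-\nabla\ph(x)=Ad$ with $A\in\Bar\nabla^2\ph(x)$. The repair is short: your uniform positive-definiteness of all matrices in $\Bar\nabla^2\ph(x)$ near $\ox$ (which you correctly deduce from the Poliquin--Rockafellar characterization \cite{Poli}, the lower inclusion above, and outer semicontinuity plus compactness of $\Bar\nabla^2\ph$) passes to all convex combinations, i.e.\ to $\co\Bar\nabla^2\ph(x)$; then the \emph{upper} inclusion gives $-\nabla\ph(x)=Ad$ for some $A\in\co\Bar\nabla^2\ph(x)$ and hence $\la\nabla\ph(x),d\ra\le-\kappa\|d\|^2<0$ once $d\ne 0$. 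With that correction your route is a legitimate alternative to the paper's, which instead invokes Chieu--Lee--Yen \cite{ChieuLee17} to obtain directly both local strong convexity of $\ph$ and positive-definiteness of $\partial^2\ph(x)(u)$ for all $x$ in a neighborhood, and rules out $d=0$ by a strong-convexity contradiction rather than by your (equally valid) local uniqueness of the stationary point under strong metric regularity.
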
\vspace*{-0.1in}
	\begin{proof} To verify (i), deduce from Drusvyatskiy and Lewis
		\cite[Proposition~3.1]{dl} that the imposed tilt stability of the
		local minimizer $\ox$ implies that the gradient mapping
		$\nabla\varphi$ is strongly metrically regular around $(\ox,0)$.
		Then Theorem~\ref{strongsol} tells us that there exists a
		neighborhood $U_1$ of $\bar{x}$ such that for all $x\in U_1$ we can
		find a direction $d\in\R^n$ satisfying \eqref{directiondescent}.
		Furthermore, it follows from Chieu et al.
		\cite[Theorem~4.7]{ChieuLee17} that there exists another
		neighborhood $U_2$ of $\ox$ such that $\varphi$ is strongly convex
		on $U_2$, and we have \begin{equation}\label{pd} \langle
			z,u\rangle>0\;\mbox{ for all }\;z\in\partial^2\varphi(x)(u)\;\mbox{
				and }\;x\in U_2,\;u\ne 0. \end{equation} Denote $U:=U_1\cap U_2$ and
		fix any $x\in U$ with $x\ne\ox$, which gives us $d\in\R^n$
		satisfying \eqref{directiondescent}. To show that $d\ne 0$, assume
		the contrary and then get from \eqref{directiondescent} that
		$\nabla\ph(x)=0$. Hence it follows from the strong convexity of
		$\ph$ that $x$ is a strict global minimizer of $\varphi$ on $U$,
		which clearly contradicts the tilt stability of $\ox$ by
		Definition~\ref{def:tilt}, and thus $d\ne 0$. Combining the latter
		with \eqref{directiondescent} and \eqref{pd}, we get $\langle\nabla
		\varphi(x),d\rangle<0$ and hence conclude that \eqref{linesearch}
		holds by using, e.g., Lemma~2.19 from Izmailov and Solodov
		\cite{Solo14}.\vspace*{0.03in}
		
		Next we verify assertion (ii). As follows from
		Theorem~\ref{solvability}(ii), the imposed strong metric regularity
		of $\nabla\ph$ around $\ox$ ensures the fulfillment of assumption
		(H1) and of course (H2). The additional semismoothness$^*$
		assumption on $\nabla\ph$ at $(\ox,0)$ is (H3) in this setting, and
		hence we deduce the well-posedness and local superlinear convergence
		of iterates $\{x^k\}$ in Algorithm~\ref{NM} from
		Theorem~\ref{localconverge}. To show that the sequence
		$\{\varphi(x^k)\}$ converges superlinearly to $\varphi(\ox)$, we
		conclude by the ${\cal C}^{1,1}$ property of $\ph$ around $\ox$ and
		Lemma~A.11 from Izmailov and Solodov \cite{Solo14} that there exists
		$\ell>0$ yielding \begin{equation*}
			|\varphi(x^{k+1})-\varphi(\ox)|\le\frac{\ell}{2}\|x^{k+1}-\ox\|^2\;\mbox{
				for sufficiently large }\;k\in\N. \end{equation*} Furthermore, the
		second-order growth condition that follows from tilt stability of
		$\ox$ (see, e.g., Mordukhovich and Nghia
		\cite[Theorem~3.2]{MorduNghia}) gives us $\kappa>0$ such that
		\begin{equation*}
			|\varphi(x^k)-\varphi(\ox)|\ge\varphi(x^k)-\varphi(\ox)\ge\frac{1}{2\kappa}\|x^k-\ox\|^2\;\mbox{
				for large }\;k\in\N. \end{equation*} Combining the two estimates
		above produces the inequality \begin{equation*}
			\frac{|\varphi(x^{k+1})-\varphi(\ox)|}{|\varphi(x^k)-\varphi(\ox)|}\le\ell\kappa\frac{\|x^{k+1}-\ox\|}{\|x^k-\ox\|},
		\end{equation*} which deduces the claimed superlinear convergence of
		$\{\ph(\ox)\}$ from the one established for
		$\{x^k\}$.\vspace*{0.03in}
		
		To finish the proof, it remains to to show that the sequence
		$\{\nabla\varphi(x^k)\}$ superlinearly converges to $0$. Indeed, the
		Lipschitz continuity of $\nabla\varphi$ around $\ox$ gives us a
		constant $\ell>0$ such that \begin{equation*}
			\|\nabla\varphi(x^{k+1})\|=\|\nabla\varphi(x^{k+1})-\nabla\varphi(\ox)\|\le\ell\|x^{k+1}-\ox\|\;\mbox{
				for large }\;k\in\N. \end{equation*} The strong local monotonicity
		of $\nabla\varphi$ around $(\bar{x},0)$  under the tilt stability of
		$\ox$ (see, e.g., Poliquin and Rockafellar \cite[Theorem~1.3]{Poli}
		for a more general result) tells us that there exists a constant
		$\kappa>0$ with \begin{equation*}
			\langle\nabla\varphi(x^k)-\nabla\varphi(\ox),x^k-\ox\rangle\ge\kappa\|x^k-\bar{x}\|^2\;\mbox{
				for large }\;k\in\N, \end{equation*} and hence
		$\|\nabla\varphi(x^k)\|\ge\kappa\|x^k-\ox\|$ for such $k$. Thus we
		arrive at the estimate \begin{equation*}
			\frac{\|\nabla\varphi(x^{k+1})\|}{\|\nabla\varphi(x^k)\|}\le\frac{\ell}{\kappa}\frac{\|x^{k+1}-\ox\|}{\|x^k-\ox\|},
		\end{equation*} which verifies that the gradient sequence
		$\{\nabla\ph(x^k)\}$ superlinearly converges to 0 as $k\to\infty$
		due to such a convergence of $x^k\to\ox$ obtained above. This
		completes the proof of the theorem. \end{proof}\vspace*{-0.03in}
	
	To conclude, we compare Theorem~\ref{thm:tilt} with the recent
	results from Mordukhovich and Sarabi
	\cite{BorisEbrahim}.\vspace*{-0.05in}
	
	\begin{Remark}[\bf comparison with known results under tilt
		stability]\label{comp-tilt} {\rm The aforementioned paper
			\cite{BorisEbrahim} developed Algorithm~\ref{NM}, written in an
			equivalent form, for computing tilt-stable minimizers $\ox$ of
			$\ph\in{\cal C}^{1,1}$. As follows from Drusvyatskiy and Lewis
			\cite[Theorem~3.3]{dl} (see also Drusvyatskiy et al.
			\cite[Proposition~4.5]{dmn} for a more precise statement), that the
			tilt-stability of $\ox$ for $\ph$ is equivalent to the strong metric
			regularity of $\nabla\ph$ around $(\ox,0)$ provided that $\ox$ is a
			local minimizer of $\ph$. The latter requirement is essential as
			trivially illustrated by the function $\ph(x):=-x^2$ on $\R$, where
			$\ox=0$ is not a tilt-stable local minimizer while $\nabla\ph$ is
			strongly metrically regular around $(\ox,0)$. Note also that the
			solvability/well-posedness of Algorithm~\ref{NM} holds under weaker
			assumptions than the strong metric regularity; see
			Section~\ref{sec:solvN}. The local superlinear convergence of
			$\{x^k\}$ to a tilt-stable minimizer $\ox$ in
			Theorem~\ref{thm:tilt}(ii) follows from Mordukhovich and Sarabi
			\cite[Theorem~4.3]{BorisEbrahim} under the semismoothness$^*$ of
			$\nabla\ph$ at $(\ox,0)$. Besides the local superlinear convergence
			of $\ph(x^k)\to\ph(\ox)$ and $\nabla\ph(x^k)\to 0$ in
			Theorem~\ref{thm:tilt}(ii), the new statements of
			Theorem~\ref{thm:tilt}(i) include the {\em descent property}
			$\la\nabla\ph(x^k),d^k\ra<0$ of the algorithm and the {\em
				backtracking line search} \eqref{linesearch} at each iteration.}
	\end{Remark}\vspace*{-0.2in}

\section{Generalized Newton Algorithm for Prox-Regular
Functions}\label{sec:prox}\vspace*{-0.05in}\setcounter{equation}{0}

	This section is devoted to the design and justification of a
	generalized Newton algorithm to solve subgradient inclusions of type
	\eqref{subgra-inc}, where $\ph\colon\R^n\to\oR$ is a continuously
	prox-regular function. We have already considered this remarkable
	class of functions in Section~\ref{sec:solvN} concerning solvability
	of the second-order subdifferential systems \eqref{newton-inc},
	which play a crucial role in the design of the generalized Newton
	algorithm to find a solution of \eqref{subgra-inc} in this section.
	The approach developed here is to reduce the subgradient inclusion
	\eqref{subgra-inc} to a gradient one of type \eqref{gra} with the
	replacement of $\ph$ from the class of continuously prox-regular
	functions by its Moreau envelope, which is proved to be of class
	${\cal C}^{1,1}$. This leads us to the well-defined and
	implementable generalized Newton algorithm expressed in terms of the
	second-order subdifferential of $\ph$ and the single-valued,
	monotone, and Lipschitz continuous proximal mapping associated with
	this function. We show that the proposed algorithm exhibits local
	superlinear convergence under the standing assumptions imposed and
	discussed above.\vspace*{0.03in}
	
	First we formulate the notions of Moreau envelopes and proximal
	mappings associated with extended-real-valued functions. Recall that
	$\ph\colon\R^n\to\oR$ is {\em proper} if
	$\dom\ph\ne\emp$.\vspace*{-0.05in}
	
	\begin{Definition}[\bf Moreau envelopes and proximal
		mappings]\label{def:moreau} Let $\varphi\colon\R^n\to\oR$ be proper
		and l.s.c., and let $\lambda>0$. The {\sc Moreau envelope}
		$e_\lambda\varphi$ and the {\sc proximal mapping} $\textit{\rm
			Prox}_\lambda\varphi$ are defined by \begin{equation}\label{Moreau}
			e_\lambda\varphi(x):=\inf\left\{\varphi(y)+\frac{1}{2\lambda}\|y-x\|^2\;\Big|\;y\in\R^n\right\},
		\end{equation} \begin{equation}\label{Prox} \textit{\rm
				Prox}_\lambda\varphi(x):={\rm
				argmin}\left\{\varphi(y)+\frac{1}{2\lambda}\|y-x\|^2\;\Big|\;y\in\R^n\right\}.
		\end{equation} If $\lambda=1$, we use the notation $e\varphi(x)$ and
		$\text{\rm Prox}_{\varphi}(x)$ in \eqref{Moreau} and \eqref{Prox},
		respectively. \end{Definition}
	
	Both Moreau envelopes and proximal mappings have been well
	recognized in variational analysis and optimization as efficient
	tools of regularization and approximation of nonsmooth functions.
	This has have done particularly for convex functions and more
	recently for continuously prox-regular functions; see Rockafellar
	and Wets \cite{Rockafellar98} and the references therein. Proximal
	mappings and the like have been also used in numerical algorithms of
	the various types revolving around computing proximal points; see,
	e.g., the book by Beck \cite{Beck} and the paper by Hare and
	Sagastiz\'abal \cite{Hare} among many other publications. In what
	follows we are going to use the proximal mapping \eqref{Prox} for
	designing a generalized Newton algorithm to solve subgradient
	inclusions \eqref{subgra-inc} for continuously  prox-regular
	functions with applications to regularized least square
	problems.\vspace*{0.03in}
	
	Here is our basic Newton-type algorithm to solve
	subgradient inclusions \eqref{subgra-inc} generated by prox-regular
	functions $\ph$. Note that this algorithm constructively describes
	the area of {\em choosing a starting point} that depends on the {\em
		constant of prox-regularity}. The {\em subproblem} of this algorithm
	at each iteration consists of finding a {\em unique} solution to the
	optimization problem in \eqref{Prox}, which is a {\em
		regularization} of $\ph$ by using {\em quadratic
		penalties}.\vspace*{-0.05in}
	
	\begin{Algorithm}[\bf Newton-type algorithm for subgradient inclusions]\label{NM3} {\rm Let $r>0$ be a constant of prox-regularity of $\ph\colon\R^n\to\oR$ from \eqref{prox}.\\[1ex]
			{\bf Step~0:} Pick any $\lambda\in(0,r^{-1})$, choose a starting
			point $x^0$ by \begin{equation}\label{start-point} x^0\in
				U_\lambda:=\text{rge}(I+\lambda\partial\varphi), \end{equation}
			and set $k:=0$.\\[1ex]
			{\bf Step~1:} If $0\in\partial\varphi(x^k)$, then stop. Otherwise
			compute \begin{equation}\label{subprob}
				v^k:=\frac{1}{\lambda}\Big(x^k-\text{\rm
					Prox}_\lambda\varphi(x^k)\Big). \end{equation} {\bf Step~2}: Choose
			$d^k\in\R^n$ such that \begin{equation}\label{prox-dir}
				-v^k\in\partial^2\varphi(x^k-\lambda v^k,v^k)(\lambda v^k+d^k).
			\end{equation} {\bf Step~3:} Compute $x^{k+1}$ by \begin{equation*}
				x^{k+1}:=x^k+d^k,\quad k=0,1,\ldots. \end{equation*} {\bf Step 4:}
			Increase $k$ by $1$ and go to Step~1.} \end{Algorithm}
	
	Note that Algorithm~\ref{NM3} does not include computing the Moreau
	envelope \eqref{Moreau} while requiring to solve subproblem
	\eqref{subprob} built upon the proximal mapping \eqref{Prox}. By
	definition of the second-order subdifferential \eqref{2nd} and the
	limiting coderivative \eqref{lim-cod}, the implicit inclusion
	\eqref{prox-dir} for $d^k$ can be rewritten explicitly as
	\begin{equation}\label{prox-dir1} (-v^k,-\lambda v^k-d^k)\in
		N_{\text{gph}\,\partial\varphi}(x^k-\lambda v^k,v^k). \end{equation}
	Observe that for convex functions $\varphi\colon\R^n\to\oR$ we can
	choose $\lm$ in Step~0 arbitrarily from $(0,\infty)$ with
	$U_\lm=\R^n$ in \eqref{start-point}. This is due a well-known result
	of convex analysis, which is reflected in the following lemma that
	plays a crucial role in the justification of Algorithm~\ref{NM3}.
	Recall that $I$ stands for the identity operator, and that $\ph$ is
	{\em prox-bounded} if it is bounded from below by a quadratic
	function on $\R^n$.\vspace*{-0.05in}
	
	\begin{Lemma}[\bf Moreau envelopes and proximal mappings for prox-regular functions]\label{rela} Let $\varphi\colon\R^n\to\oR$ be prox-bounded on $\R^n$ and continuously prox-regular at $\ox$ for
		$\ov\in\partial\varphi(\ox)$ with  {modulus $r>0$}. Then the following assertions hold for all $\lambda\in (0,r^{-1})$, where the parameter $\lm$ can be chosen arbitrarily from $(0,\infty)$ with $U_\lm=\R^n$ if $\ph$ is convex:\\[1ex]
		{\bf(i)} The Moreau envelope $e_\lambda\varphi$ is of class ${\cal C}^{1,1}$ on $U_\lambda$ taken from \eqref{start-point}, which contains a neighborhood of $\ox+\lambda\ov$. Furthermore, $\ox$ is a solution to \eqref{subgra-inc} if and only if $\nabla e_\lambda\varphi(\ox)=0$.\\[1ex]
		{\bf(ii)} The proximal mapping $P_\lm\ph$ is single-valued,
		monotone, and Lipschitz continuous on $U_\lm$ and satisfies the
		condition $P_\lm\ph(\ox+\lm\ov)=\ox$.\\[1ex]
		{\bf(iii)} The gradient of $e_\lm\ph$ is calculated by
		\begin{equation}\label{gradEnve}
			\nabla
			e_\lambda\varphi(x)=\frac{1}{\lambda}\Big(x-\text{\rm		Prox}_\lambda\varphi(x)\Big)=\big(\lambda
			I+\partial\varphi^{-1}\big)^{-1}(x)\;\mbox{ for all }\;x\in
			U_\lambda. 
		\end{equation} 
	\end{Lemma} \begin{proof} Denote
		$\varphi_{\ov}(x):=\varphi(x+\ox) - \varphi(\ox)-\langle\ov,x\rangle$ and observe that
		$\varphi_{\ov}$ satisfies the assumptions from Poliquin and Rockafellar \cite[Theorem~4.4]{Poliquin}. This yields assertions (i)
		and (ii). The results for convex functions $\ph$ follow from
		\cite[Theorem~2.26]{Rockafellar98}. Assertion (iii) is taken from
		Poliquin and Rockafellar \cite[Theorem~4.4]{Poliquin}.
	\end{proof}\vspace*{-0.03in}
	
	The next simple lemma is also needed in the proof of the main result
	of this section.\vspace*{-0.05in}
	
	\begin{Lemma}[\bf second-order subdifferential graph]\label{2gph}
		{In the setting of Lemma~{\rm\ref{rela}}}, for any
		$\lambda\in(0,r^{-1})$, $x\in U_\lambda$, and $v=\nabla
		e_\lambda\varphi(x)$ we have the equivalence \begin{equation*}
			(v^*,x^*)\in{\rm gph}\big(D^*\nabla
			e_\lambda\varphi\big)(x,v)\iff(v^*-\lambda
			x^*,x^*)\in\gph\partial^2\varphi (x-\lambda v,v). \end{equation*}
	\end{Lemma}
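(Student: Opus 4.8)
The statement is an equivalence between graphs of two coderivative-type objects, so the natural strategy is to unfold both sides using the defining relationships and then reduce everything to a single normal-cone membership. The key structural fact I would exploit is Lemma~\ref{rela}(iii), which gives $v=\nabla e_\lambda\varphi(x)=\tfrac{1}{\lambda}(x-\text{Prox}_\lambda\varphi(x))$, equivalently $x-\lambda v=\text{Prox}_\lambda\varphi(x)\in\partial\varphi^{-1}(v)$, so that $v\in\partial\varphi(x-\lambda v)$ and hence $(x-\lambda v,v)\in\gph\partial\varphi$. This is what makes the point $(x-\lambda v,v)$ a legitimate base point for $\partial^2\varphi$ on the right-hand side. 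First I would record the explicit relation between the two gradient/subgradient graphs: since $\nabla e_\lambda\varphi=(\lambda I+(\partial\varphi)^{-1})^{-1}$, a point $(x,v)\in\gph\nabla e_\lambda\varphi$ corresponds bijectively to the point $(x-\lambda v,v)\in\gph\partial\varphi$ via the affine change of variables $x\mapsto x-\lambda v$ in the first coordinate (and identity in the second).

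\emph{Relating the two graphs as sets.} The heart of the argument is a normal-cone calculus computation. Writing $y:=x-\lambda v$, the map $\Phi(x,v):=(x-\lambda v,v)$ is a linear isomorphism of $\R^n\times\R^n$ carrying $\gph\nabla e_\lambda\varphi$ onto (a relevant piece of) $\gph\partial\varphi$. Under such an invertible linear change of coordinates, limiting normal cones transform by the adjoint of the inverse: $N_{\gph\nabla e_\lambda\varphi}(x,v)=\Phi^*\,N_{\gph\partial\varphi}(\Phi(x,v))$. The matrix of $\Phi$ is $\begin{pmatrix} I & -\lambda I\\ 0 & I\end{pmatrix}$, with adjoint $\begin{pmatrix} I & 0\\ -\lambda I & I\end{pmatrix}$. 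I would apply this to translate membership in $N_{\gph\nabla e_\lambda\varphi}(x,v)$ into membership in $N_{\gph\partial\varphi}(x-\lambda v,v)$, taking care that the sign conventions in the coderivative definition \eqref{lim-cod} are threaded through correctly.

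\emph{Bookkeeping with the coderivative sign convention.} Unwinding the definitions, $(v^*,x^*)\in\gph(D^*\nabla e_\lambda\varphi)(x,v)$ means $x^*\in(D^*\nabla e_\lambda\varphi)(x,v)(v^*)$, i.e.\ $(x^*,-v^*)\in N_{\gph\nabla e_\lambda\varphi}(x,v)$. Similarly, $(v^*-\lambda x^*,x^*)\in\gph\partial^2\varphi(x-\lambda v,v)$ means $x^*\in\partial^2\varphi(x-\lambda v,v)(v^*-\lambda x^*)=(D^*\partial\varphi)(x-\lambda v,v)(v^*-\lambda x^*)$, i.e.\ $(x^*,-(v^*-\lambda x^*))\in N_{\gph\partial\varphi}(x-\lambda v,v)$. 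Applying the normal-cone transformation rule with the adjoint matrix above sends $(x^*,-v^*)$ to $(x^*,\,\lambda x^*-v^*)=(x^*,-(v^*-\lambda x^*))$, which is exactly the vector appearing on the right. So the equivalence reduces to the single identity $N_{\gph\nabla e_\lambda\varphi}(x,v)=\Phi^*N_{\gph\partial\varphi}(x-\lambda v,v)$ evaluated at these vectors.

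\emph{Where the difficulty lies.} The routine sign- and coordinate-tracking is mechanical; the genuine obstacle is justifying that limiting normal cones transform \emph{exactly} (not merely by inclusion) under the linear isomorphism $\Phi$. This requires $\Phi$ to be invertible—which it is, with inverse having matrix $\begin{pmatrix} I & \lambda I\\ 0 & I\end{pmatrix}$—and that the graph $\gph\nabla e_\lambda\varphi$ is exactly the $\Phi^{-1}$-image of the portion of $\gph\partial\varphi$ over $U_\lambda$, which is precisely the content of Lemma~\ref{rela}(iii). For a diffeomorphism (here, a linear bijection), the change-of-variables formula for limiting normal cones is exact and is available from the calculus in Rockafellar and Wets \cite{Rockafellar98} (or Mordukhovich \cite{Mor18}); the care needed is to confirm local closedness of the graphs near the reference points so that the limiting construction behaves well, which follows from continuous prox-regularity as noted after Definition~\ref{prox-reg}. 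Once the exact transformation rule is in place, the equivalence is immediate by reading off the two sides.
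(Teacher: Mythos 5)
Your proof is correct, and it packages the argument differently from the paper, though the substance is the same. The paper stays entirely within coderivative calculus: it applies the inversion relation \eqref{inverse} to pass from $D^*\nabla e_\lambda\varphi(x,v)$ to $D^*\big(\lambda I+\partial\varphi^{-1}\big)(v,x)$ via Lemma~\ref{rela}(iii), then uses the coderivative sum rule for adding the smooth map $\lambda I$ to reach $D^*\partial\varphi^{-1}(v,x-\lambda v)$, and applies \eqref{inverse} once more to land on $\partial^2\varphi(x-\lambda v,v)$. Your single shear $\Phi(x,v)=(x-\lambda v,v)$ is precisely the composition of those three graph operations (flip, shear by $-\lambda I$, flip back), so the two proofs exploit the identical structural fact; what differs is the tool used to transfer normal vectors. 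You invoke the exact change-of-coordinates rule $N_{\Phi^{-1}(\Omega)}(z)=\Phi^*N_{\Omega}\big(\Phi(z)\big)$ for a linear isomorphism, together with the set identity $\Phi\big(\gph(\nabla e_\lambda\varphi|_{U_\lambda})\big)=\gph\partial\varphi$ supplied by Lemma~\ref{rela}(iii); note that this same identity is the hidden ingredient in the paper's proof as well, since \eqref{inverse} is applied there under the identification $(\nabla e_\lambda\varphi)^{-1}=\lambda I+\partial\varphi^{-1}$. What each approach buys: yours needs only one elementary geometric fact---the exact (not merely one-sided) invariance of limiting normals under linear isomorphisms, which holds with no closedness or qualification hypotheses, so your cautionary remark about local closedness of the graphs is harmless but not actually needed for this step---and it makes transparent that the lemma is purely an affine correspondence of graphs; the paper's version avoids introducing $\Phi$ and cites only rules it already uses elsewhere in the text. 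Your sign bookkeeping is right: $(x^*,-v^*)\in N_{\gph\nabla e_\lambda\varphi}(x,v)$ corresponds to $(x^*,\lambda x^*-v^*)=\big(x^*,-(v^*-\lambda x^*)\big)\in N_{\gph\partial\varphi}(x-\lambda v,v)$, which is exactly the stated equivalence. One wording quibble only: the phrase ``normal cones transform by the adjoint of the inverse'' sits slightly at odds with your displayed formula, which uses $\Phi^*$ itself; the computation you then perform pushes normals of the envelope graph forward by $(\Phi^*)^{-1}$, and is internally consistent, so nothing mathematical is affected.
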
 
	\begin{proof} The relationships in \eqref{inverse} and \eqref{gradEnve} tell
		us that $(v^*,x^*)\in\gph(D^*\nabla e_\lambda\varphi)(x,v)$ if and
		only if \begin{equation*} -v^*\in D^*(\lambda
			I+\partial\varphi^{-1})(v,x)(-x^*). \end{equation*} Elementary
		operations with the limiting coderivative yield
		\begin{equation}\label{inverseofMoreau} D^*(\lambda
			I+\partial\varphi^{-1})(v,x)(-x^*)=-\lambda
			x^*+(D^*\partial\varphi^{-1})(v,x-\lambda v)(-x^*). \end{equation}
		This ensures the equivalence of \eqref{inverseofMoreau} to the
		inclusion \begin{equation*} \lambda
			x^*-v^*\in(D^*\partial\varphi^{-1})(v,x-\lambda v)(-x^*),
		\end{equation*} which implies by \eqref{inverse} that
		$x^*\in\partial^2\varphi(x-\lambda v,v)(v^*-\lambda x^*)$ and thus
		completes the proof. 
	\end{proof}
	
	\begin{Remark}[\bf on the iterative sequence generated by Algorithm \ref{NM3}] \label{NMandNM3} \rm  Lemma \ref{rela} and Lemma \ref{2gph} allow us to show that Algorithm \ref{NM3} is a special case of the general scheme given in Algorithm \ref{NM}. Indeed,  we can equivalently rewrite the conditions in \eqref{subprob} and \eqref{prox-dir} as
		$$
		v^k= \nabla e_\lambda \varphi(x^k) \quad \text{and }\; -\nabla e_\lambda \varphi(x^k) \in \partial^2 e_\lambda \varphi(x^k)(d^k)= \partial \langle d^k, \nabla e_\lambda \varphi\rangle (x^k). 
		$$
		Therefore, Algorithm \ref{NM3} reduces to Algorithm \ref{NM} with $\varphi:=e_\lambda \varphi$.  
	\end{Remark}
	
	\medskip 
	Now we proceed with the formulation and proof of the major result of
	this section on the well-posedness and superlinear convergence of
	the proposed algorithm for prox-regular functions.\vspace*{-0.05in}

	\begin{Theorem}[\bf local superlinear convergence of
		Algorithm~\ref{NM3}]\label{localNM3} In addition to the standing
		assumption {\rm(H1)--(H3)} with $\ov=0$ therein, let
		$\varphi\colon\R^n\to\oR$ be prox-bounded on $\R^n$ and continuously
		prox-regular at $\ox$ for $0\in\partial\varphi(\ox)$ with constants
		$r,\ve>0$ from \eqref{prox}. Then there exists a neighborhood $U$ of
		$\ox$ such that for all starting points $x^0\in U$ we have that
		Algorithm~{\rm\ref{NM3}} is well-defined and generates a sequence of
		iterates $\{x^k\}$, which converges superlinearly to the solution
		$\bar{x}$ of \eqref{subgra-inc} as $k\to\infty$. 
	\end{Theorem}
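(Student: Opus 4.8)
The plan is to reduce the subgradient inclusion \eqref{subgra-inc} for the prox-regular function $\varphi$ to the gradient equation \eqref{gra} for its Moreau envelope $e_\lambda\varphi$, and then to invoke Theorem~\ref{localconverge} for the resulting $\mathcal{C}^{1,1}$ problem. Lemma~\ref{rela} provides the foundation: under prox-boundedness and continuous prox-regularity, for $\lambda\in(0,r^{-1})$ the envelope $e_\lambda\varphi$ is of class $\mathcal{C}^{1,1}$ on $U_\lambda$, and since $\bar v=0$ the set $U_\lambda$ contains a neighborhood of $\bar x+\lambda\bar v=\bar x$; moreover $\bar x$ solves \eqref{subgra-inc} if and only if $\nabla e_\lambda\varphi(\bar x)=0$. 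Thus $\bar x$ is a solution of the gradient equation \eqref{gra} for the $\mathcal{C}^{1,1}$ function $e_\lambda\varphi$, and the machinery of Section~\ref{sec:newtonC11} becomes available.

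First I would show that Algorithm~\ref{NM3} applied to $\varphi$ coincides with Algorithm~\ref{NM} applied to $e_\lambda\varphi$. By Lemma~\ref{rela}(iii) the quantity $v^k$ from \eqref{subprob} equals $\nabla e_\lambda\varphi(x^k)$, so the stopping test and the gradient evaluation of the two algorithms agree. For the Newton step I would apply Lemma~\ref{2gph} with $v=v^k=\nabla e_\lambda\varphi(x^k)$ and $(v^*,x^*)=(d^k,-v^k)$: the lemma converts the $\mathcal{C}^{1,1}$ Newton inclusion $-v^k\in(D^*\nabla e_\lambda\varphi)(x^k)(d^k)$ of \eqref{alC11} into exactly the inclusion $-v^k\in\partial^2\varphi(x^k-\lambda v^k,v^k)(\lambda v^k+d^k)$ of \eqref{prox-dir}. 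As the update $x^{k+1}=x^k+d^k$ is common to both, the iteration sequences are identical, so it suffices to prove convergence of Algorithm~\ref{NM} for $e_\lambda\varphi$.

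Next I would verify that the standing assumptions (H1)--(H3) for $\partial\varphi$ at $(\bar x,0)$ transfer to $\nabla e_\lambda\varphi$ at $\bar x$. The mechanism is the identity $(x,v)\in\gph\nabla e_\lambda\varphi\iff(x-\lambda v,v)\in\gph\partial\varphi$ coming from Lemma~\ref{rela}(iii), i.e.\ the linear isomorphism $L\colon(y,v)\mapsto(y+\lambda v,v)$ carries $\gph\partial\varphi$ onto $\gph\nabla e_\lambda\varphi$ near $(\bar x,0)$, together with its coderivative version in Lemma~\ref{2gph}. For (H2): evaluating Lemma~\ref{2gph} at $(\bar x,0)$ shows that $0\in(D^*\nabla e_\lambda\varphi)(\bar x)(u)$ forces $0\in\partial^2\varphi(\bar x,0)(u)$, whence $u=0$ by (H2) for $\partial\varphi$ and the Mordukhovich criterion \eqref{cod-cr}; thus $\ker D^*\nabla e_\lambda\varphi(\bar x)=\{0\}$ and $\nabla e_\lambda\varphi$ is metrically regular around $\bar x$. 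For (H1): the same bijection $x\leftrightarrow(x-\lambda v,v)$ matches the Newton system for $\nabla e_\lambda\varphi$ at points $x$ near $\bar x$ with \eqref{newton-inc} for $\partial\varphi$ at points $(y,v)=(x-\lambda v,v)$ near $(\bar x,0)$, so the robust solvability for $\partial\varphi$ yields the robust solvability for $\nabla e_\lambda\varphi$.

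The step I expect to be the main obstacle is the transfer of the semismooth$^*$ property (H3). Here I would argue that semismoothness$^*$ is invariant under the linear isomorphism $L$: directional limiting normal cones to $\gph\nabla e_\lambda\varphi=L(\gph\partial\varphi)$ transform via the inverse adjoint $(L^*)^{-1}$ while the underlying directions transform via $L$, so the defining pairing $\langle u^*,u\rangle-\langle v^*,v\rangle$ in Definition~\ref{semi*} is preserved and vanishes for $\gph\nabla e_\lambda\varphi$ at $L(\bar x,0)=(\bar x,0)$ whenever it vanishes for $\gph\partial\varphi$ at $(\bar x,0)$; this preservation may alternatively be quoted from Mordukhovich and Sarabi \cite{BorisEbrahim}. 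With (H1)--(H3) in force for the $\mathcal{C}^{1,1}$ function $e_\lambda\varphi$, Theorem~\ref{localconverge} furnishes a neighborhood $U$ of $\bar x$, which I would shrink into $U_\lambda$ so that all iterates remain where $e_\lambda\varphi$ is $\mathcal{C}^{1,1}$, such that for every $x^0\in U$ Algorithm~\ref{NM} for $e_\lambda\varphi$---equivalently Algorithm~\ref{NM3} for $\varphi$---is well-defined and its iterates $\{x^k\}$ converge superlinearly to $\bar x$, which is the asserted conclusion.
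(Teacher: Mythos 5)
Your proposal is correct and follows essentially the same route as the paper's own proof: reduction of \eqref{subgra-inc} to the gradient equation $\nabla e_\lambda\varphi(x)=0$ via Lemma~\ref{rela}, transfer of (H1)--(H3) from $\partial\varphi$ to $\nabla e_\lambda\varphi$ using the graph correspondence and Lemma~\ref{2gph} together with the Mordukhovich criterion \eqref{cod-cr}, and then an appeal to Theorem~\ref{localconverge}. Your only departures are presentational---you make the identification of Algorithm~\ref{NM3} with Algorithm~\ref{NM} for $e_\lambda\varphi$ explicit, and you supply a self-contained linear-isomorphism argument for the semismooth$^*$ transfer where the paper simply cites Mordukhovich and Sarabi \cite{BorisEbrahim}---both of which are sound.
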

	\begin{proof} It follows from Remark \ref{NMandNM3} that solving the
		subgradient inclusion \eqref{subgra-inc} for the class of
		prox-regular functions under consideration by Algorithm \ref{NM3} is equivalent to solving
		the gradient system $\nabla e_\lm\ph(x)=0$ with the ${\cal C}^{1,1}$
		function $e_\lm\ph$ under the indicated choice of parameters by Algorithm~\ref{NM}. To
		apply Algorithm~\ref{NM} to the equation $\nabla e_\lm\ph(x)=0$, we
		need to check that assumptions (H1)--(H3) imposed in this theorem on
		$\partial\ph$ are equivalent to the corresponding assumptions on
		$\nabla e_\lm\ph$ imposed in Theorem~\ref{localconverge}.
		
		Let us first verify that assumption (H1) of the theorem yields its
		fulfillment of its counterpart for $e_\lm\ph$. Since $U_\lm$ from
		\eqref{start-point} contains a neighborhood of $\ox$ by
		Lemma~\ref{rela}(i), it suffices to show that there is a
		neighborhood $\Tilde U$ of $\ox$ such that for each $x\in\Tilde U$
		there exists $d\in\R^n$ satisfying
		\begin{equation}\label{solforenve} -\nabla
			e_\lambda\varphi(x)\in\big(D^*\nabla e_\lambda\varphi\big)(x)(d).
		\end{equation} Indeed, assumption (H1) gives us neighborhoods $U$ of
		$\ox$ and $V$ of $\ov=0$ for which inclusion \eqref{newton-inc}
		holds. Since $\nabla e_\lm\ph$ and $I-\nabla e_\lm\ph$ are
		continuous around $\ox$  and since \begin{equation*}
			\big((I-\lm\nabla e_\lm\ph)(\ox),\nabla
			e_\lm\ph(\ox)\big)=(\ox,0)\in(U\cap U_\lm)\times V, \end{equation*}
		there exists a neighborhood $\Tilde U$ of $\ox$ such that $\Tilde
		U\subset(U\cap U_\lm)$ and that \begin{equation*} (I-\lm\nabla
			e_\lm\ph)(\Tilde U)\times\nabla e_\lm\ph(\Tilde U)\subset(U\cap
			U_\lm)\times V. \end{equation*} Fix now any $x\in\Tilde U$ and put
		$y:=\nabla e_\lm\ph(x)$. Performing elementary transformations
		brings us to  \begin{equation*} D^*\big(\nabla
			e_\lambda\varphi\big)^{-1}(y,x)(y)=D^*\big(\lambda
			I+\partial\varphi^{-1}\big)(y,x)(y)=\lambda
			y+\big(D^*\partial\varphi^{-1}\big)(y,x-\lambda y)(y).
		\end{equation*} It follows from \eqref{newton-inc} that there exists $\widetilde{d} \in \R^n$ such that $-y \in (D^*\partial \varphi) (x-\lambda y, y)(\widetilde{d})$, which implies that 
		$-\widetilde{d}\in(D^*\partial\varphi^{-1})(y,x-\lambda y)(y)$.
		Denoting $d:=-\lambda y+\widetilde{d}$, we arrive at
		\begin{equation*} -d\in D^*\big(\nabla
			e_\lambda\varphi\big)^{-1}(y,x)(y), \end{equation*} which tells us
		by \eqref{inverse} that $-\nabla e_\lambda\varphi(x)\in(D^*\nabla
		e_\lambda\varphi)(x)(d)$ and thus verifies
		\eqref{solforenve}.\vspace*{0.03in}
		
		Next we show that the the metric regularity assumption (H2) on
		$\partial\ph$ around $(\ox,0)$ is equivalent to the metric
		regularity of the Moreau envelope gradient $\nabla
		e_\lambda\varphi$. To proceed, pick any $\lambda\in(0,r^{-1})$ from
		Step~0 of the algorithm and then get by Lemma~\ref{2gph} that
		\begin{equation*} 0\in\big(D^*\partial\varphi\big)(\bar{x},0)(u)\iff
			0\in\big(D^*\nabla e_\lambda\varphi\big)(\bar{x},0)(u).
		\end{equation*} Then the claimed equivalence between the metric
		regularity properties follows directly from the Mordukhovich
		coderivative criterion \eqref{cod-cr} applied to both mappings
		$\partial\ph$ and $\nabla e_\lambda\varphi$.
		
		The equivalence between the semismoothness$^*$ of $\partial\varphi$
		at $(\ox,0)$  and of $\nabla e_\lambda\varphi$ around this point is
		verified in the proof of Theorem~6.2 in Mordukhovich and Sarabi
		\cite{BorisEbrahim}. Thus we can apply the above
		Theorem~\ref{localconverge} to the gradient system $\nabla
		e_\lambda\varphi(x)=0$ and complete the proof of this theorem by
		using Lemma~\ref{rela}. 
	\end{proof} 
	
	\medskip
	Next we present a consequence of Theorem~\ref{localNM3} for {\em
		tilt-stable local minimizers}.\vspace*{-0.05in}
	
	\begin{Corollary}[\bf computing tilt-stable minimizers of
		prox-regular functions]\label{Newtonfortilt} Let
		$\varphi\colon\R^n\to\oR$ be continuous prox-regular at $\ox$ for
		$\bar{v}:=0\in\partial\varphi(\bar{x})$ with $r>0$ taken from
		\eqref{prox}, and let $\ox$ be a tilt-stable local minimizer of
		$\varphi$. Assume that the subgradient mapping $\partial\varphi$ is
		semismooth$^*$ at $(\bar{x},\bar{v})$. Then whenever
		$\lambda\in(0,r^{-1})$ there exists a neighborhood $U$ of $\bar{x}$
		such that Algorithm~{\rm\ref{NM3}} is well-defined for all $x^0\in
		U$ and generates a sequence of iterates $\{x^k\}$, which
		superlinearly converges to $\bar{x}$ as $k\to\infty$.
	\end{Corollary}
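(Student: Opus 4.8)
The plan is to deduce the corollary from Theorem~\ref{localNM3} by checking that its standing hypotheses (H1)--(H3), taken with $\ov=0$, are all forced by the tilt stability of $\ox$. The pivotal ingredient is the equivalence established by Drusvyatskiy and Lewis \cite{dl}: for a continuously prox-regular function, tilt stability of a \emph{local minimizer} $\ox$ is equivalent to \emph{strong metric regularity} of the subgradient mapping $\partial\varphi$ around $(\ox,0)$. Accordingly, my first step is to invoke this equivalence to promote the assumed tilt stability into strong metric regularity of $\partial\varphi$ around $(\ox,0)$, which is exactly the robust regularity property whose consequences for solvability were catalogued in Section~\ref{sec:solv}.

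With strong metric regularity in hand, the three assumptions follow almost immediately. Assumption (H2), the metric regularity of $\partial\varphi$ around $(\ox,0)$, is implied by strong metric regularity through Remark~\ref{Morcri}(i). Assumption (H1), robust solvability of the second-order inclusion \eqref{newton-inc} around $(\ox,0)$, is precisely the conclusion of Theorem~\ref{solvability}(ii) applied to $F=\partial\varphi$, which moreover yields compact-valuedness of the direction sets $\Gamma_{\partial\varphi}$. Assumption (H3), semismoothness$^*$ of $\partial\varphi$ at $(\ox,0)$, is imposed directly in the corollary. Once (H1)--(H3) and continuous prox-regularity are secured, a direct application of Theorem~\ref{localNM3} furnishes the well-posedness of Algorithm~\ref{NM3} and the local superlinear convergence of its iterates for every $\lambda\in(0,r^{-1})$.

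The single point demanding care is that Theorem~\ref{localNM3} also requires $\varphi$ to be prox-bounded on $\R^n$, a global condition not assumed here. Since prox-regularity and all objects actually used by the algorithm---the subdifferential $\partial\varphi$, the second-order subdifferential $\partial^2\varphi$, and the proximal step \eqref{subprob} for $\lambda<r^{-1}$---are purely local near $\ox$, I would close this gap by a localization: replace $\varphi$ by the prox-bounded modification $\varphi+\delta_{\mathbb{B}_\rho(\ox)}$ for a small $\rho>0$, which coincides with $\varphi$ on a neighborhood of $\ox$ and therefore leaves $\partial\varphi$, $\partial^2\varphi$, and ${\rm Prox}_\lambda\varphi$ unchanged there. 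The main obstacle I anticipate lies exactly in this reconciliation of the local data of $\varphi$ with the global prox-boundedness demanded upstream: one must shrink the starting-point neighborhood $U$ (and the radius $\rho$) enough that the proximal subproblem \eqref{subprob} keeps selecting points inside the region where the modified and original functions agree, so that the two functions generate identical iterates. Verifying this compatibility---essentially that ${\rm Prox}_\lambda\varphi(x^k)$ remains near $\ox$ throughout the run---is the only genuine work beyond citing the preceding results, and it is guaranteed by the Lipschitz continuity of ${\rm Prox}_\lambda\varphi$ and the superlinear decay of $\|x^k-\ox\|$ coming from Theorem~\ref{localNM3}.
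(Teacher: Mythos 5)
Your proposal is correct and follows essentially the same route as the paper's proof: invoke Drusvyatskiy--Lewis to upgrade tilt stability of the local minimizer to strong metric regularity of $\partial\varphi$ around $(\ox,0)$, obtain (H1) from Theorem~\ref{solvability}(ii) and (H2) as an immediate consequence, take (H3) as assumed, dispose of the missing prox-boundedness by adding the indicator function of a compact set containing a neighborhood of $\ox$, and conclude via Theorem~\ref{localNM3}. The only difference is one of emphasis: the paper dispatches the localization step by simply noting that it does not change the local minimization, while you elaborate on matching the proximal iterates of the modified and original functions --- a legitimate fine point, though your appeal to Lipschitz continuity of ${\rm Prox}_\lambda\varphi$ for the original (possibly non-prox-bounded) function is slightly circular, since that property itself comes from Lemma~\ref{rela} under prox-boundedness.
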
\vspace*{-0.1in} \begin{proof} Observe that, in the
		case of minimizing $\ph$ under consideration, the prox-boundedness
		assumption on $\ph$ imposed in Theorem~\ref{localNM3} can be
		dismissed without loss of generality. Indeed, we can always get this
		property by adding to $\ph$ the indicator function of some compact
		set containing a neighborhood of $\ox$, which makes $\ph$ to be
		prox-bounded without changing the local minimization. Furthermore,
		it follows from Drusvyatskiy and Lewis \cite[Theorem~3.3]{dl} that
		the mapping $\partial\ph$ is strongly metrically regular around
		$(\bar{x},\bar{v})$. Thus assumption (H2) holds automatically,
		{while (H1) follows from
			Theorem~\ref{solvability}(ii)}. The fulfillment of (H3) is assumed
		in this corollary, and so we deduce its conclusions from
		Theorem~\ref{localNM3}.
	\end{proof} 
	
	Note that the well-posedness and local superlinear convergence of
	the coderivative-based generalized Newton algorithm of computing
	tilt-stable local minimizers of the Moreau envelope $e_\lm\ph$ under
	the semismoothness$^*$ assumption on $\partial\ph$ has been recently
	obtained by Mordukhovich and Sarabi
	\cite[Theorem~6.2]{BorisEbrahim}. As follows from the discussions
	above, the latter algorithm is equivalent to Algorithm~\ref{NM3} in
	the case of tilt-stable local minimizers. However, the explicit form
	of Algorithm~\ref{NM3} seems to be more convenient for
	implementations, which is demonstrated below. Moreover, in
	Algorithm~\ref{NM3} we specify the area of starting points $x^0$ in
	Step~0 and also verify the choice of the prox-parameter $\lm$
	ensuring the best performance of the proposed
	algorithm.\vspace*{0.05in}
	
	Let us illustrate Algorithm~\ref{NM3} by the following example of
	solving the subgradient inclusion \eqref{subgra-inc} generated by a
	nonconvex, nonsmooth, and continuously prox-regular function
	$\ph\colon\R\to\oR$. This function is taken from Mordukhovich and
	Outrata \cite[Example~4.1]{BorisOutrata} and relates to the modeling
	of equilibria.\vspace*{-0.05in}
	
	\begin{Example}[\bf illustration of computing by
		Algorithm~\ref{NM3}] {\rm Consider the function \begin{equation*}
				\varphi(x):=|x|+\frac{1}{2}\big(\max\{x,0\}\big)^2-\frac{1}{2}\big(\max\{0,-x\}\big)^2+\delta_\Gamma(x),\quad
				x\in\R, \end{equation*} where $\dd_\Gamma$ is the indicator function
			of the set $\Gamma:=[-1,1] $. We can clearly write $\varphi$ in the
			form \begin{equation}\label{ph0}
				\varphi(x)=\vartheta(x)+\delta_\Gamma(x),\quad x\in\R,
			\end{equation} via the continuous, nonconvex, nonsmooth, piecewise
			quadratic function $\vt\colon\R\to\R$ given by \begin{equation*}
				\vartheta(x):=\begin{cases}
					-x-\frac{1}{2}x^2&\text{if}\quad x\in[-1,0],\\
					x+\frac{1}{2}x^2&\text{if}\quad x\in[0,1]. \end{cases}
			\end{equation*} Then we get by the direct calculations that
			\begin{equation*} \partial\varphi(x)=\begin{cases}
					(-\infty,0]&\text{if}\quad x=-1, \\
					\{-1-x\}&\text{if}\quad x\in(-1,0),\\
					[-1,1]&\text{if}\quad x=0,\\
					\{1 + x\}&\text{if}\quad x\in(0,1),\\
					[2,\infty)&\text{if}\quad x=1. \end{cases} \end{equation*} It is not
			hard to check that the mapping $\partial\varphi$ is strongly
			metrically regular around $(\bar{x},0)$ with $\ox=0$ and
			semismooth$^*$ at this point, and thus the assumptions of
			Theorem~\ref{localNM3} hold. Moreover, we get \begin{equation*}
				\varphi(x)\ge\varphi(u)+v(x-u)-\frac{1}{2}\|x-u\|^2\;\mbox{ for all
				}\;(u,v)\in\gph\partial\ph\cap(U\times\R), x \in U \end{equation*} where
			$U=[-1,1]$, which implies that $\varphi$ is continuously prox-regular on  $U$ with modulus $r=1$. Choosing $\lambda:=\frac{1}{2}\in(0,1)$ and
			$x^0:=\frac{1}{3}\in{\rm rge}(I+\frac{1}{2}\partial\ph)$, we run
			Algorithm~\ref{NM3} with \begin{equation*}
				\text{\rm Prox}_\lambda\varphi(x^0)=\text{\rm argmin}_{y\in\R}\big\{\varphi(y)+(y-x^0)^{2}\big\}=0.
			\end{equation*} Thus the vector $v^0$ in Step~1 of the algorithm is
			calculated by
			$v^0=\frac{1}{\lambda}(x^0-P_\lambda\varphi(x^0))=\frac{2}{3}$. To
			find $d^0\in\R$ in Step~2 of the algorithm, we have by
			\eqref{prox-dir1} that \begin{equation*} (-v^0,-\lambda v^0-d^0)\in
				N_{\text{gph}\,\partial\varphi}(x^0-\lambda
				v^0,v^0)=N_{\text{gph}\,\partial\varphi}\left(0,2/3\right).
			\end{equation*} The second-order calculations in Mordukhovich and
			Outrata [41, Equation (4.7)] yield \begin{equation*}
				N_{\text{gph}\,\partial\varphi}\left(0,2/3\right)=\big\{(\omega,z)\in\R^2\;\big|\;z=0\big\}.
			\end{equation*} This tells us that $-\lambda v^0-d^0=0$, and hence
			$d^0=-\frac{1}{3}$. Letting $x^1:=x^0+d^0=0$ by Step~3, we arrive at
			$0\in\partial\varphi(x^1)$, and thus solve the subgradient inclusion
			\eqref{subgra-inc} for  $\varphi$ from \eqref{ph0}.}
	\end{Example}\vspace*{-0.05in}
	
	We conclude this section with the brief discussion on some other
	Newton-type methods to solve set-valued inclusions involving the
	subgradient systems \eqref{subgra-inc}.\vspace*{-0.05in}
	
	\begin{Remark}[\bf comparison with other Newton-type methods to solve generalized equations]\label{comp-inc} {\rm Let us make the following observations:\\[1ex]
			{\bf(i)} Josephy \cite{josephy} was the first to propose a
			Newton-type algorithm to solve {\em generalized equations} in the
			sense of Robinson \cite{rob} written in the form
			\begin{equation}\label{ge} 0\in f(x)+F(x), \end{equation} where
			$f\colon\R^n\to\R^n$ is a smooth single-valued mapping, and where
			$F\colon\R^n\tto\R^n$ is a set-valued one, which originally was
			considered as the normal cone to a convex set while covering in this
			case classical variational inequalities and nonlinear
			complementarity problems. The Josephy-Newton method constructs the
			next iterate $x^{k+1}$ by solving the linearized generalized
			equation \begin{equation}\label{lin-ge} 0\in f(x^k)+\nabla
				f(x^k)(x-x^k)+F(x) \end{equation}
			with the same set-valued part $F(x)$ as in \eqref{ge}. Note that in this method we must have a nonzero function $f$ in \eqref{ge}; otherwise algorithm \eqref{lin-ge} stops at $x^1$. There are several results on the well-posedness and superlinear convergence of the Josephy-Newton method under appropriate assumptions; see, e.g., the books by Facchinei and Pang \cite{JPang} and by Izmailov and Solodov \cite{Solo14} with the references therein. The major assumption for \eqref{ge}, which is the most related to our paper, is the strong metric regularity of $f+F$ around $(\ox,0)$ imposed in Dontchev and Rockafellar \cite[Theorem~6C.1]{Donchev09}. In the case of subgradient systems \eqref{subgra-inc}, we actually have $f=\nabla\ph$ and $F=0$ in \eqref{ge}, which gives us the strong metric regularity of $\nabla\ph$. As discussed in Sections~\ref{sec:solvN} and \ref{sec:newtonC11}, the latter assumption is more restrictive that our standing ones imposed in Theorem~\ref{localconverge}. A similar strong metric regularity assumption is imposed in the semismooth Newton method of solving generalized equations (without changing $F$ in iterations) presented in Theorem~6F.1 of the aforementioned book by Dontchev and Rockafellar.\\[1ex]
			{\bf(ii)} Gfrerer and Outrata \cite{Helmut} have recently introduced the new {\em semismooth$^*$ Newton method} to solve the generalized equation $0\in F(x)$. In contrast to the Josephy-Newton and semismooth Newton methods for generalized equations, the new method approximates the set-valued part $F$ of \eqref{ge} by using a certain linear structure inside the limiting coderivative graph under the nonsingularity of matrices from the mentioned linear structure. Local superlinear convergence of the proposed algorithm is proved there under the additional semismooth$^*$ assumptions on the mapping $F$.\\[1ex]
			{\bf(iii)} Another Newton-type method to solve the inclusions $0\in
			F(x)$, with the verification of well-posedness and local superlinear
			convergence, was developed by Dias and Smirnov \cite{ds} based on
			{\em tangential approximations} of the graph of $F$. The major
			assumption therein is the metric regularity of $F$ and the main tool
			of analysis is the Mordukhovich criterion \eqref{cod-cr}. Observe
			that the well-posedness result of the suggested algorithm requires
			the Lipschitz continuity of $F$, which is rarely the case for
			subgradient mappings associated with nonsmooth functions.}
	\end{Remark} \vspace*{-0.2in}

\section{Generalized Newton Algorithm in Composite
Optimization}\label{sec:Newcomposite}\vspace*{-0.05in}\setcounter{equation}{0}

In this section we develop a generalized Newton algorithm to solve
	the subgradient inclusion $0\in\partial\varphi(x)$ with the cost
	function $\varphi:\R^n\to\oR$ given in the {\em composite
		optimization form} by \begin{equation}\label{noncvcomposite} \min \;
		\varphi(x):= f(x) + g(x) \quad \text{subject to }\; x \in \R^n,
	\end{equation} where the function $f:\R^n\to \R$ is
	$\mathcal{C}^2$-smooth while the {\em regularizer} $g:\R^n \to
	\overline{\R}$ is continuously prox-regular. Since the function
	$\ph$ in \eqref{noncvcomposite} is also continuously prox-regular,
	we can solve this problem by using the generalized Newton algorithm
	developed in Section~\ref{sec:prox}. However, one of the
	difficulties in implementing Algorithm~\ref{NM3} is to compute the
	proximal mapping of $\varphi$, which is the sum of two functions.
	This motivates us to design a generalized Newton algorithm to solve
	the subgradient inclusion $0\in\partial\varphi(x)$ associated with
	the composite optimization problem \eqref{noncvcomposite} by
	calculating the proximal mapping {\em only} for the regularizer $g$
	that is often easy to compute.
	
	To proceed, we employ the construction of {\em forward-backward
		envelopes} introduced by Patrinos and Bemporad \cite{pb} for convex
	composite functions and then largely used to design numerical
	methods in various composite frameworks of optimization; see, e.g.,
	Themelis et al. \cite{tsp} with the references therein. This leads
	us to developing a well-defined and implementable generalized Newton
	algorithm expressed in terms of the second-order subdifferential of
	the regularizer $g$ and the single-valued, monotone, and Lipschitz
	continuous proximal mapping associated with this function. We show
	that the proposed algorithm exhibits local superlinear convergence
	under the standing assumptions formulated in
	Section~\ref{sec:newtonC11}. \vspace*{-0.05in}
	
	\begin{Definition}[\bf forward-backward envelopes] Let
		$\varphi:=f+g$, where $f:\R^n\to \R$ is $\mathcal{C}^1$-smooth, and
		where $g:\R^n\to\overline{\R}$ is a proper l.s.c.\ function. The
		{\sc forward-backward envelope} $($FBE$)$ of $\varphi$ with the
		parameter value $\gamma>0$ is given by \begin{equation}\label{FBE}
			\varphi_\gamma(x):= \inf_{y \in \R^n} \left\{f(x) + \langle \nabla
			f(x),y -x\rangle + g(y) + \frac{1}{2\gamma}\|y-x\|^2 \right\}.
		\end{equation} 
	\end{Definition}\vspace*{-0.05in}
	By definition \eqref{Moreau} of the Moreau
	envelope, \eqref{FBE} is equivalent to \begin{equation}\label{FBE2}
		\varphi_\gamma(x)= f(x) - \frac{\gamma}{2}\|\nabla f(x)\|^2+
		e_\gamma g\big(x-\gamma\nabla f(x)\big). 
	\end{equation}
	Now we derive several properties of FBE  in our setting, that are of independent interest while being instrumental to design the
	aforementioned generalized Newton algorithm for
	\eqref{noncvcomposite} and justify its well-definiteness and
	superlinear convergence.\vspace*{0.03in}
	
	The first proposition verifies the ${\cal C}^1$-smoothness of FBE
	with computing its gradient and verify the desired properties of the
	prox-gradient mapping associated with
	\eqref{noncvcomposite}.\vspace*{-0.05in}
	
	\begin{Proposition}[\bf smoothness of FBE and related properties]
		\label{FBEdiff} Let $f$ be of class $\mathcal{C}^2$ around $\bar{x}$
		where $0\in\partial\varphi(\bar{x})$, and let $g$ be prox-bounded
		and continuously prox-regular at $\ox$ for $-\nabla f(\ox)$ with
		modulus $r>0$. Then for all $\gamma \in (0, r^{-1})$ there exists a
		neighborhood $U$ of $\ox$ on which the following properties hold:
		\begin{itemize} \item[\bf (i)] The mapping $x\mapsto
			\text{\rm Prox}_{\gamma g}(x-\gamma \nabla f(x))$ is single-valued
			and  Lipschitz continuous. \item[\bf (ii)] The FBE $\varphi_\gamma$
			is continuously differentiable and its gradient mapping is computed by \begin{equation}\label{gradFBE}
				\nabla\varphi_\gamma (x) = \gamma^{-1}\big(I-\gamma \nabla^2
				f(x)\big)\big(x- \text{\rm Prox}_{\gamma g}(x-\gamma \nabla
				f(x))\big). \end{equation} \end{itemize} 
		Moreover, if $I -\gamma
		\nabla^2 f(x)$  is nonsingular for  $x \in U$, we have the equivalence
		\begin{equation}\label{relationcomposite} \nabla
			\varphi_\gamma(x) = 0 \iff 0 \in \partial \varphi(x).
		\end{equation}
		If in addition the regularizer $g$ is
		convex, then $\gamma$ can be chosen in $(0,\infty)$.
	\end{Proposition} \begin{proof} By Lemma~\ref{rela}
		and the continuous prox-regularity of $g$, for each $\gamma\in
		(0,r^{-1})$ there is a neighborhood $U_\gamma$ of $\ox -
		\gamma\nabla f(\ox)$ such that the proximal mapping $\text{\rm
			Prox}_{\gamma g}$ is single-valued and Lipschitz continuous on
		$U_\gamma$. Since $f$ is $\mathcal{C}^2$-smooth around $\ox$, there is a neighborhood  $U$ of $\ox$ ensuring the Lipschitz continuous of the mapping $x \mapsto x -\gamma \nabla f(x)$ on $U$ and the fulfillment of the relationships
		$$
		x \in U \Longrightarrow x-\gamma\nabla f(x) \in U_\gamma,
		$$
		which therefore verifies the single-valuedness of the prox-gradient mapping
		$x\mapsto \text{\rm Prox}_{\gamma g}(x-\gamma \nabla f(x))$ on $U$. The Lipschitz continuity of this mapping follows from
		the Lipschitz continuity of the proximal mapping $\text{\rm Prox}_{\gamma g}$ on
		$U_\gamma$ and of the mapping $x \mapsto x -\gamma \nabla f(x)$ on $U$. This verifies assertion (i).
		
		The gradient calculation in \eqref{gradFBE} of assertion (ii) follows directly from \eqref{FBE2} and Lemma~\ref{rela}(iii). It remains to verify implication \eqref{relationcomposite} if $I -\gamma \nabla^2 f(x)$ is nonsingular for  $x \in U$. Indeed, due to the nonsingular of $I -\gamma \nabla^2 f(x)$, the gradient equation $\nabla \varphi_\gamma (x) = 0$ is equivalent to $x=
		\text{\rm Prox}_{\gamma g}(x-\gamma \nabla f(x))$. The latter is
		equivalent to the fact that $x = (I+\gamma \partial g)^{-1}(x-\gamma
		\nabla f(x))$ by Lemma~\ref{rela}. This means that $0 \in
		\nabla f(x) + \partial g(x)$. Using finally the limiting subdifferential sum
		rule with equalities from Mordukhovich \cite[Proposition~1.107]{Mordukhovich06}, we
		obtain the inclusion $0\in \partial\varphi(x)$. In the  case where $g$ is convex, we proceed similarly to the above to justifying that $\gamma$ can be arbitrary chosen from $(0,\infty)$,  and thus we complete the proof of this proposition.\end{proof} 
	
	Proposition~\ref{FBEdiff} tells us that using the forward-backward envelope \eqref{FBE} makes it possible to pass from the nonsmooth composite problem \eqref{noncvcomposite} to the
	unconstrained one \begin{equation}\label{FBEop} \min \quad
		\varphi_\gamma(x)\quad \text{subject to } \;x\in\R^n \end{equation}
	with a smooth cost function. Due to the explicit calculations of
	$\varphi_\gamma$ in \eqref{FBE2} and its gradient \eqref{gradFBE},
	we can extend Algorithm~\ref{NM}  to cover problem
	\eqref{noncvcomposite} by reducing it to \eqref{FBEop}. The
	implementation of this procedure requires revealing appropriate
	assumptions on $\varphi$ in \eqref{noncvcomposite}, which ensure the
	fulfillment of those for $\ph_\gg$ and thus allow us to apply the
	results of Section~\ref{sec:newtonC11} to the reduced problem \eqref{FBEop}.
	
	Note that \eqref{FBEop} is generally not a problem of ${\cal
		C}^{1,1}$ optimization, since Proposition~\ref{FBEdiff} does not guarantee the Lipschitz continuity of $\nabla\ph_\gg$. Nevertheless, in the case where $f$ is quadratic, the ${\cal C}^{1,1}$ property of $\ph_\gg$ follows from Proposition~\ref{FBEdiff} by formula \eqref{gradFBE}.
	From now on, we consider the problem \begin{equation}\label{QP} \text{minimize
		}\;\varphi(x):=\frac{1}{2}\langle Ax,x\rangle+\langle
		b,x\rangle+\alpha+g(x),\quad x\in\R^n, \end{equation} where
	$A\in\R^{n\times n}$ is a symmetric matrix, $b\in\R^n$,
	$\alpha\in\R$, and the regularizer $g:\R^n\to \overline{\R}$ is continuously
	prox-regular at $\ox\in\R^n$ for $-A\ox-b$ with $\ox$ satisfying the stationary condition $0 \in
	\partial\varphi(\ox)$. Clearly, \eqref{QP} is a subclass of the optimization problem \eqref{noncvcomposite} with $f(x):= \frac{1}{2}\langle Ax,x\rangle +\langle b,x\rangle +\alpha$.
	
	Let us highlight that problems of type \eqref{QP} are important in their own right, while they also arise frequently as {\em subproblems} in various numerical algorithms including
	{\em sequential quadratic programming methods} (SQP)
	\cite{Bonnans,Solo14}, {\em proximal Newton methods}
	\cite{kanzow21,lss,myzz}, etc. Observe furthermore that optimization problems
	of this type often appear in practical models related, e.g., to
	machine learning and statistics.\vspace*{0.03in}
	
	Now we start the procedure of designing and justifying a locally
	convergent generalized Newton algorithm to solve the inclusion $0
	\in \partial\varphi(x)$, where $\varphi$ is the cost function in the
	composite problem \eqref{QP}, by applying the corresponding results
	for the ${\cal C}^{1,1}$ gradient systems $\nabla\varphi_\gamma(x)
	=0$ obtained in Section~\ref{sec:newtonC11}. The first step is to
	express the generalized Hessian of the FBE $\ph_\gg$ from
	\eqref{FBEop} in terms of the given data of \eqref{QP}.\vspace*{-0.03in}
	
	\begin{Proposition}[\bf calculating the generalized Hessian of FBE]
		\label{calculatepsi} Let $\varphi=f+g$ be as in \eqref{QP} with a prox-bounded and continuously prox-regular $g\colon\R^n\to\oR$, and let $\gamma \in (0,r^{-1})$ be such that the matrix $B:=I -\gamma A$ is nonsingular, where $r>0$ is a constant of
		prox-regularity  of $g$. Then there exists a neighborhood $U$ of
		$\ox$ on which we have the second-order calculation formula
		\begin{equation}
			z \in \partial^2\varphi_\gamma (x)(w) \iff B^{-1} z - Aw \in \partial^2 g \left(\text{\rm Prox}_{\gamma g}(u),\frac{1}{\gamma}\big(u -\text{\rm Prox}_{\gamma g}(u)\big) \right)\big(w-\gamma B^{-1}z\big)
		\end{equation}
		for any $x \in U$, $w \in \R^n$, and $u:= x- \gamma(Ax+b)$.
	\end{Proposition}
	\begin{proof} Using the Moreau envelope, define the function $h:\R^n\to\overline{\R}$ by
		$$
		h(x):= e_{\gamma}g\big(x-\gamma(Ax+b)\big) \quad \text{for all }\; x \in \R^n.
		$$
		Due to Proposition~\ref{FBEdiff}, there exists a neighborhood $U$ of $\ox$ such that $\varphi_\gamma$ is $\mathcal{C}^1$-smooth and that the mapping $x \mapsto \text{\rm Prox}_{\gamma g}(x-\gamma(Ax+b))$ is continuous on $U$, which implies that $h$ is continuous differentiable around $\ox$ due to \eqref{FBE2}. Take any $x \in U$ and put $u:=x-\gamma (Ax+b)$.
		It follows from the standard chain rule that
		$$
		\nabla h(x) = (I-\gamma A)^* \nabla e_\gamma g (u) = B\nabla e_\gamma g(u).
		$$
		Using \eqref{FBE2} and the second-order sum rule from Mordukhovich \cite[Proposition~1.121]{Mordukhovich06}, for each $w\in\R^n$ we have
		\begin{equation}\label{cal1}
			\partial^2\varphi_\gamma(x)(w) = (A-\gamma A^*A)w + \partial^2 h(x)(w) = BAw + \partial^2 h(x)(w).
		\end{equation}
		Employing another second-order chain rule taken from \cite[Theorem~1.127]{Mordukhovich06} yields
		\begin{equation}\label{cal2}
			\partial^2h(x)(w)  = B\partial^2 e_\gamma g(u)(Bw)\;\mbox{ for all }\;w\in\R^n.
		\end{equation}
		Combining \eqref{cal1} and \eqref{cal2} brings us to the formula
		$$
		\partial^2\varphi_\gamma(x)(w) = BAw + B\partial^2 e_\gamma g(u)(Bw)
		$$
		valid for the same $w$. This verifies the equivalences
		$$
		z \in \partial^2\varphi_\gamma(x)(w) \iff z - BAw \in B\partial^2e_\gamma g(u)(Bw) \iff B^{-1}z - Aw \in \partial^2e_\gamma g(u)(Bw)
		$$
		for all $w\in \R^n$. Using finally Lemma~\ref{2gph}, we arrive at the inclusion
		$$
		B^{-1}z - Aw \in \partial^2 g \left(\text{\rm Prox}_{\gamma g}(u), \frac{1}{\gamma}\big(u-\text{\rm Prox}_{\gamma g}(u)\big) \right)(Bw - \gamma B^{-1}z +\gamma Aw)
		$$
		and thus complete the proof of the proposition due to $Bw + \gamma Aw = w$.
	\end{proof}
	
	The next proposition shows that the metric regularity and tilt stability of the original cost function $\ph$ in \eqref{QP} is equivalent to the corresponding properties of its FBE $\ph_\gamma$ in \eqref{FBEop}.\vspace*{-0.05in}
	
	\begin{Proposition}[\bf metric
		regularity and tilt-stability of FBE]\label{metrictilt} Let in the setting of Proposition~{\rm\ref{calculatepsi}} $\ox$ be such that
		$0\in\partial\varphi(\bar{x})$. Then we have the following assertions:
		\begin{itemize} 
			\item[\bf (i)] $\partial\varphi$ is metrically regular around $(\bar{x},0)$  if and only if $\nabla\varphi_\gamma$ is metrically regular
			around $\ox$.
			\item[\bf (ii)] If $\bar{x}$ is a tilt-stable local minimizer of $\varphi$, then $\bar{x}$ is a tilt-stable local minimizer of $\varphi_\gamma$ provided that the matrix $B=I-\gamma A$ is positive definite.
		\end{itemize}
	\end{Proposition}
	\begin{proof} It follows from Proposition~\ref{calculatepsi} that
		\begin{equation}\label{equi}
			z \in \partial^2\varphi_\gamma(\bar{x})(w) \iff B^{-1}z \in Aw + \partial^2 g\left(\text{\rm Prox}_{\gamma g}(\bar{u}),\frac{1}{\gamma}\big(\bar{u} -\text{\rm Prox}_{\gamma g}(\bar{u})\big) \right)\big(w-\gamma B^{-1}z\big)
		\end{equation}
		with $\bar{u}$ defined therein. Proposition~\ref{FBEdiff} gives us $\nabla\varphi_\gamma(\ox)=0$, which means that
		$$
		\gamma^{-1}\big(I-\gamma A\big)\big(x- \text{\rm Prox}_{\gamma g}(x-\gamma \nabla f(x))\big) =0
		$$
		and implies that $\ox =  \text{\rm Prox}_{\gamma g}(\ox-\gamma \nabla f(\ox))$ since $B = I-\gamma A$ is nonsingular. This tells us that $\bar{x} - \text{\rm Prox}_{\gamma g}(\bar{u}) =0$, and thus \eqref{equi} is equivalent to
		\begin{eqnarray}\label{equa1}
			B^{-1}z &\in& Aw + \partial^2 g(\bar{x}, -A\bar{x}-b)(w-\gamma B^{-1}z) \nonumber\\
			&=& A(w-\gamma B^{-1}z) + \partial^2 g(\bar{x}, -A\bar{x}-b)(w-\gamma B^{-1}z) + \gamma AB^{-1}z. 
		\end{eqnarray}
		The second-order subdifferential sum rule from Mordukhovich  \cite[Proposition~1.121]{Mordukhovich06} yields
		\begin{eqnarray}\label{equa2}
			A(w-\gamma B^{-1}z) + \partial^2 g(\bar{x},-A\bar{x}-b)(w-\gamma B^{-1}z)&=&\partial^2(f+ g)(\bar{x},0)(w-\gamma B^{-1}z)\nonumber \\
			&=&\partial^2\varphi(\bar{x},0)(w-\gamma B^{-1}z).
		\end{eqnarray}
		
		Combining \eqref{equi}, \eqref{equa1}, and \eqref{equa2}, we arrive at the equivalence
		\begin{equation}\label{equiFBE}
			\begin{array}{ll}
				z\in\partial^2\varphi_\gamma(\bar{x})(w)&\iff B^{-1}z - \gamma AB^{-1}z \in \partial^2\varphi(\bar{x},0)(w-\gamma B^{-1}z)\\&\iff  z \in \partial^2\varphi(\bar{x},0)(w-\gamma B^{-1}z).
			\end{array}
		\end{equation}
		It follows from the coderivative  criterion \eqref{Morcri} and the equivalence in \eqref{equiFBE} that $\partial\varphi$ is metrically regular around $(\bar{x},0)$  if and only if $\nabla\varphi_\gamma$ is metrically regular around $\ox$, which justifies assertion {\bf(i)}.\vspace*{0.05in}
		
		To clarify assertion (ii), suppose that $\bar{x}$ is a tilt-stable local minimizer of $\varphi$, which also implies that $\partial \varphi$ is metrically regular around $(\ox,0)$. Let $w \in \R^n, w \ne 0$ and $z \in \partial^2\varphi_\gamma(\ox)(w)$. By \eqref{equiFBE}, we deduce that $z \in \partial^2\varphi(\ox,0)(w-\gamma B^{-1}z)$, which ensures that $z \ne 0$ by the coderivative  criterion \eqref{Morcri}. It follows from the pointbased second-order characterization of tilt-stability by Poliquin and Rockafellar \cite[Theorem 1.3]{Poli} that $\langle z , w - \gamma B^{-1}z\rangle \geq 0$, and thus
		\begin{equation} \label{zwPD}
			\langle z , w  \rangle \geq  \gamma \langle B^{-1}z, z\rangle>0. 
		\end{equation}
		due to the positive definiteness of $B^{-1}$, which tells us that $\ox$ is a tilt-stable local minimizer of $\varphi_\gamma$ due to \cite[Theorem 1.3]{Poli}. This completes the proof  of the proposition.
	\end{proof} 
	
	\medskip 
	We need another proposition before formulating our generalized Newton algorithm to solve the composite optimization problem \eqref{QP} and justifying its superlinear convergence via the given data.\vspace*{-0.05in}
	
	\begin{Proposition}[\bf semismoothness$^*$ of FBE
		derivatives]\label{semidirect} In the setting of Proposition~{\rm\ref{metrictilt}}, we claim that the semismooth$^*$ property of $\partial g$ at $(\bar{x},\ov)$ with $\ov:=-A\bar{x}-b$  yields the semismoothness$^*$ of $\nabla\varphi_\gamma$ at $\bar{x}$.
	\end{Proposition}
	\begin{proof} Denote $h_\gamma(x):=\text{\rm Prox}_{\gamma g}(x-\gamma(Ax+b))$ on $\R^n$ and get by Proposition~\ref{FBEdiff} that
		\begin{equation}\label{FBEh}
			\nabla\varphi_\gamma(x) = \gamma^{-1}(I-\gamma A)\big(x-h_\gamma(x)\big) = \gamma^{-1}Bx-\gamma^{-1}Bh_\gamma(x)\;\mbox{ for all }\;x\in \R^n.
		\end{equation}
		Since $0 \in \partial\varphi(\ox)$, it follows from Proposition~\ref{FBEdiff} that $\nabla \varphi_\gamma(\ox)=0$. 
		Due to \eqref{FBEh} and the nonsingularity of $B =  I - \gamma A$, we have $\bar{x}= h_\gamma(\bar{x})$. Furthermore, it follows from  Lemma~\ref{rela}, we have
		$$
		\text{\rm Prox}_{\gamma g}(x) = (I+ \gamma \partial g)^{-1}(x) \quad \text{for all }\; x \; \text{near } \ox.
		$$
		The semismoothness$^*$ of $\partial g$ at $(\bar{x},\ov)$ yields this property for $I+ \gamma \partial g$ at $(\ox, \ox + \gamma \ov)$ due to Gfrerer and Outrata \cite[Proposition 3.6]{Helmut}, and hence $(I+ \gamma \partial g)^{-1}$ is semismooth$^*$ at $\ox + \gamma \ov$ (see \cite[p.~496]{Helmut}) meaning that $\text{\rm Prox}_{\gamma g}$ is semismooth$^*$ at $\bar{x}-\gamma(A\bar{x}+b)$. It follows therefore from Khanh et al. \cite[Lemma~7.3]{kmpt221}  that $h_\gamma$ is semismooth$^*$ at $\bar{x}$. Employing finally \eqref{FBEh} together with \cite[Proposition 3.6]{Helmut} tells us that the gradient mapping $\nabla\varphi_\gamma$ is semismooth$^*$ at $\bar{x}$.
	\end{proof}\vspace*{0.05in}
	
	The proposed generalized Newton algorithm to solve problem \eqref{QP} is formulated as follows.\vspace*{-0.05in}
	
	\begin{Algorithm}[\bf Newton-type algorithm in nonconvex composite optimization]\label{NM4} {\rm Let $\ph$ be taken from \eqref{QP}, and let $r>0$ be a constant of prox-regularity of $g\colon\R^n\to\oR$.\\[1ex]
			{\bf Step~0:} Pick any $\gamma\in(0, r^{-1})$ such that $I-\gamma A$  is nonsingular, choose a starting point $x^0$, and set $k:=0$.\\[1ex]
			{\bf Step~1:} If $0\in\partial\varphi(x^k)$, then stop. Otherwise compute
			\begin{equation}\label{subprob1}
				u^k:= x^k - \gamma(Ax^k+b), \; v^k:=\text{\rm Prox}_{\gamma g}(u^k).
			\end{equation}
			{\bf Step~2}: Choose $d^k\in\R^n$ such that
			\begin{equation}\label{prox-dir3}
				-\frac{1}{\gamma}(x^k-v^k)-Ad^k \in\partial^2 g\Big(v^k, \frac{1}{\gamma}(u^k-v^k)\Big)(x^k-v^k + d^k).
			\end{equation}
			{\bf Step~3:} Compute $x^{k+1}$ by
			\begin{equation*}
				x^{k+1}:=x^k+d^k,\quad k=0,1,\ldots.
			\end{equation*}
			{\bf Step 4:} Increase $k$ by $1$ and go to Step~1.}
	\end{Algorithm}\vspace*{-0.15in}
	
	\begin{Remark}[\bf on the iterative sequence generated by Algorithm \ref{NM4}] \label{NMandNM4} \rm  Propositions~\ref{FBEdiff} and \ref{calculatepsi}  allow us to show that Algorithm \ref{NM4} is a special case of the general scheme given in Algorithm \ref{NM}. Indeed, by Proposition~\ref{FBEdiff} and the constructions of $\{u^k\}$ and $\{v^k\}$ in Algorithm \ref{NM4}, we have 
		$$
		\nabla \varphi_\gamma(x^k) = \frac{1}{\gamma}B (x^k - v^k), \quad \text{where }\; B:= I-\gamma A.   
		$$
		Therefore, inclusion \eqref{prox-dir3} can be equivalently rewritten as
		$$
		-B^{-1}\nabla \varphi_\gamma (x^k)   - Ad^k \in \partial^2 g\left(\text{\rm Prox}_{\gamma g}(u^k), \frac{1}{\gamma}\big(u^k-\text{\rm Prox}_{\gamma g}(u^k)\big) \right)\big(d^k +\gamma B^{-1}\nabla \varphi_\gamma(x^k)\big),
		$$
		which means that $-\nabla\varphi_\gamma(x^k)\in \partial^2\varphi_\gamma (x^k)(d^k) = \partial \langle d^k, \nabla \varphi_\gamma\rangle (x^k)$ by using Proposition \ref{calculatepsi} and \eqref{scal}. 
		Therefore, Algorithm \ref{NM4} reduces to Algorithm \ref{NM} with $\varphi:=\varphi_\gamma$.  
	\end{Remark}
	
	The next theorem justifies the existence and local superlinear convergence of iterates in Algorithm~\ref{NM4} under basically the standing assumptions of this paper. Note that we do not assume the convexity of the regularizer $g$ as in problems of convex composite optimization.\vspace*{-0.05in}
	
	\begin{Theorem}{\bf(local superlinear convergence of
			Algorithm~\ref{NM4}).}\label{localNM4} Let $\varphi=f+g$ be as in
		\eqref{QP}, where the regularizer $g\colon\R^n\to\oR$ is continuously prox-regular at the stationary point $\ox$ for $-A\ox-b$. In addition to the standing assumption {\rm(H1)--(H3)} with $\ov=0$, suppose that $g$ is prox-bounded on $\R^n$. Then there exists a neighborhood $U$ of $\ox$ such
		that for any starting points $x^0\in U$ we have that
		Algorithm~{\rm\ref{NM4}} is well-defined and generates a sequence of
		iterates $\{x^k\}$, which converges $Q$-superlinearly to the solution
		$\bar{x}$ of the subgradient inclusion $0\in \partial\varphi(x)$. 
	\end{Theorem}
	\begin{proof} It follows from Proposition~\ref{FBEdiff} that
		solving the gradient system $\nabla \varphi_\gamma(x)=0$ gives us a solution to the original subgradient inclusion $0\in \partial\varphi(x)$ under the
		indicated choice of parameters.Furthermore, Remark~\ref{NMandNM4} gives us an opportunity to reduce solving the inclusion $0\in\partial\varphi(x)$ by Algorithm~\ref{NM4} to solving
		the equation $\nabla\varphi(x)=0$ for the function $\ph_\gg$ of class ${\cal C}^{1,1}$ by using Algorithm~\ref{NM}. To justify this reduction, we need to check that assumptions (H1)--(H3) imposed in
		this theorem on $\partial\ph$ implies the fulfillment of the corresponding assumptions
		on $\nabla\varphi_\gamma$ imposed in Theorem~\ref{localconverge}.
		
		To proceed, observe that Propositions~\ref{metrictilt} and \ref{semidirect} show that assumptions (H2) and (H3) of this theorem yield the fulfillment of their counterparts for $\varphi_\gamma$. Pick any $\gamma\in(0, r^{-1})$ such that $I-\gamma A$ is nonsingular. Since $f$ is a quadratic function, there exists a neighborhood $U_\gamma$ of $\ox$ on which the prox-gradient mapping $x\mapsto \text{\rm
			Prox}_{\gamma g}(x-\gamma\nabla f(x))$ is single-valued and Lipschitz
		continuous, while $\varphi_\gamma$ is of class $\mathcal{C}^{1,1}$ on
		$U_\gamma$ by Proposition~\ref{FBEdiff}. Now it suffices to find a neighborhood $\Tilde U$ of $\ox$ such that for each $x\in\Tilde U$ there exists $\Tilde{d}\in\R^n$ satisfying the second-order inclusion
		\begin{equation}\label{solforenvecomposite} -\nabla
			\varphi_\gamma(x)\in \partial^2\varphi_\gamma (x)(\Tilde{d}).
		\end{equation} 
		Assumption (H1) tells us that there are neighborhoods $U$ of $\ox$ and $V$ of $\ov=0$ such that for every $(x,v)\in\gph\partial\varphi\cap(U\times V)$ there exists a
		direction $d\in\R^n$ satisfying \eqref{newton-inc}. Consider the single-valued
		mapping $T: U \cap U_\gamma\to \R^n\times \R^n$ defined by $$ T(x):=
		\Big(\text{\rm Prox}_{\gamma g}(u),A\text{\rm Prox}_{\gamma g}(u)
		+b + \frac{1}{\gamma}\big(u-\text{\rm Prox}_{\gamma g}(u)\big)\Big)
		\quad \text{for all }\; x \in U \cap U_\gamma, \; u:= x -
		\gamma(Ax+b). $$ Since $T$ is continuous with
		$T(\ox)=(\ox,0)$, there exists a neighborhood $\widetilde{U}$ of
		$\ox$ such that $T(\widetilde{U})\subset (U\cap U_\gamma) \times V$. For any  $x \in \widetilde{U}$ with $u:= x -\gamma(Ax+b)$ we have
		\begin{equation}\label{Proxginsub}
			\frac{1}{\gamma}\Big(u-\text{\rm Prox}_{\gamma
				g}(u)\Big)\in\partial g\big(\text{\rm Prox}_{\gamma g}(u)\big).   
		\end{equation}
		Applying the subdifferential sum rule from Mordukhovich \cite[Proposition~1.107]{Mordukhovich06} gives us 
		\begin{equation}\label{sumruleProxg}
			\partial\varphi(\text{\rm Prox}_{\gamma g}(u)) = A\,\text{\rm Prox}_{\gamma g}(u)
			+b + \partial g(\text{\rm Prox}_{\gamma g}(u)).
		\end{equation}
		Combining \eqref{Proxginsub} and \eqref{sumruleProxg}, we obtain the inclusion
		$$
		\Big(\text{\rm Prox}_{\gamma g}(u), A\text{\rm Prox}_{\gamma g}(u)
		+b + \frac{1}{\gamma}\big(u-\text{\rm Prox}_{\gamma g}(u)\big)\Big)
		\in \text{\rm gph}\partial\varphi\cap (U\times V). 
		$$ 
		Therefore, there exists a direction $d \in \R^n$ satisfying 
		\begin{equation} \label{complicated1}
			-A\text{\rm Prox}_{\gamma g}(u) - b - \frac{1}{\gamma}\big(u-\text{\rm
				Prox}_{\gamma g}(u)\big) \in \partial^2\varphi\Big(\text{\rm
				Prox}_{\gamma g}(u),A\text{\rm Prox}_{\gamma g}(u) +b +
			\frac{1}{\gamma}\big(u-\text{\rm Prox}_{\gamma g}(u)\big)\Big)(d).
		\end{equation}
		Using the second-order subdifferential sum rule from the aforementioned book
		\cite[Proposition~1.121]{Mordukhovich06} leads us to the generalized Hessian representation
		\begin{equation*}
			\partial^2\varphi\Big(\text{\rm Prox}_{\gamma g}(u),A\text{\rm
				Prox}_{\gamma g}(u) +b + \frac{1}{\gamma}\big(u-\text{\rm Prox}_{\gamma
				g}(u)\big)\Big)(d) = Ad +\partial^2g\left(\text{\rm Prox}_{\gamma
				g}(u), \frac{1}{\gamma}\big(u-\text{\rm Prox}_{\gamma g}(u)\big)
			\right)(d). 
		\end{equation*} 
		Combining now the latter with \eqref{complicated1} leads us to
		\begin{equation} \label{solv}
			-A\text{\rm Prox}_{\gamma g}(u) - b - \frac{1}{\gamma}\big(u-\text{\rm
				Prox}_{\gamma g}(u)\big) - Ad \in \partial^2g\Big(\text{\rm
				Prox}_{\gamma g}(u), \frac{1}{\gamma}\big(u-\text{\rm Prox}_{\gamma
				g}(u)\big)\Big)(d). 
		\end{equation} 
		Moreover, we clearly have the equalities
		\begin{eqnarray}\label{proxgu}
			-A\,\text{\rm Prox}_{\gamma g}(u) - b - \frac{1}{\gamma}\big(u-\text{\rm
				Prox}_{\gamma g}(u)\big) &=& -A\,\text{\rm Prox}_{\gamma g}(u) - b - \frac{1}{\gamma}\big(x-\gamma (Ax-b) -\text{\rm
				Prox}_{\gamma g}(u)\big) \nonumber\\
			&=&  \gamma^{-1}(I-\gamma A)(\text{\rm Prox}_{\gamma g}(u) - x) = -\nabla \varphi_\gamma(x). 
		\end{eqnarray}
		Put $B:= I-\gamma A$ and
		$\widetilde{d}:= d -
		\gamma B^{-1}\nabla \varphi_\gamma (x)$. Using \eqref{solv} and \eqref{proxgu}, we deduce that
		$$
		-B^{-1}\nabla\varphi_\gamma (x) - A\widetilde{d} \in \partial^2 g
		\Big(\text{\rm Prox}_{\gamma g}(u),\frac{1}{\gamma}\big(u
		-\text{\rm Prox}_{\gamma g}(u)\big)\Big)\big(\widetilde{d}+\gamma
		B^{-1}\nabla\varphi_\gamma (x)\big), 
		$$ 
		which verifies \eqref{solforenvecomposite} due to Proposition \ref{calculatepsi}. This allows us to
		apply Theorem~\ref{localconverge} to the gradient equation $\nabla
		\varphi_\gamma(x)=0$ and thus completes the proof of this theorem.
	\end{proof} 
	
	\medskip 
	We have the following specification of Theorem~\ref{localNM4} in the case of of tilt-stable minimizers. 
	
	\begin{Corollary}[\bf computing tilt-stable minimizers in nonconvex composite optimization]\label{Newtonfortiltcomposite} Let $\ox$ be a tilt-stable local minimizer of the function $\ph$ in \eqref{QP}, where the regularizer $g\colon\R^n\to \oR$ is prox-bounded on $\R^n$ and continuously prox-regular at $\ox$ for $-A\ox-b$, and where the mapping $\partial g$ is semismooth$^*$ at $(\bar{x},-\nabla f(\ox))$ with $f(x):=\frac{1}{2}\la Ax,x\ra+\la b,x\ra+\al$.  Then there exists a
		neighborhood $U$ of $\bar{x}$ such that Algorithm~{\rm\ref{NM4}} is
		well-defined for all $x^0\in U$ and generates a sequence of iterates
		$\{x^k\}$, which $Q$-superlinearly converges to the local tilt-stable minimizer $\bar{x}$.
	\end{Corollary} 
	\begin{proof} By the tilt stability of $\ox$, the assumptions of Theorem~\ref{localNM4} are satisfied; see Remark~\ref{comp-tilt}.
	\end{proof} 
	
	\medskip 
	We conclude this section with two remarks. The first one concerns the nonsingularity assumption on the matrix $I-\gg A$ in Algorithm~\ref{NM4}.\vspace*{-0.05in}
	
	\begin{Remark}[\bf on nonsingularity of the shifted matrix in Algorithm~\ref{NM4}]\label{posit-dedf} {\rm Observe first that in this paper we have {\em never imposed the positive-definiteness} assumption on the generalized Hessian $\partial^2\ph$ of the cost functions. Furthermore, we do not assume that the quadratic matrix $A$ in the composite optimization problem \eqref{QP} is nonsingular. However, the nonsingularity of the shifted matrix $I-\gg A$ for some $\gg>0$ is required in the very construction of Algorithm~\ref{NM4}. Note that we can {\em always find} a number $\gg$ such that $I-\gamma A$ is positive definite, and thus it is also nonsingular due to the symmetry of the matrix $A$  and the openness
			of the set of positive definite matrices.}
	\end{Remark}\vspace*{-0.05in}
	
	The second remark discusses the comparison between our Algorithm~\ref{NM4} to solve \eqref{QP} and a version of the semismooth Newton method in composite optimization.\vspace*{-0.05in}
	
	\begin{Remark}[\bf comparison with the semismooth Newton method in composite optimization]\label{comp-inccomposite} \rm The thesis by Milzarek \cite{Milzarek} deals with the composite minimization problem \eqref{noncvcomposite}, where $f$ is twice continuously differentiable (possibly nonconvex), while the regularizer $g$ is a proper l.s.c.\ and convex function. The developed approach employs the semismooth Newton method to solve the equation
		\begin{equation*}
			F^{\Lambda}(x):=x-\text{\rm Prox}_{g}^\Lambda\big(x-\Lambda^{-1}\nabla f(x)\big)=0,
		\end{equation*}
		where $\Lambda$ is a symmetric, real, and positive-definite $n\times n$ matrix, and where
		\begin{equation}\label{proxi}
			\text{\rm Prox}_{g}^\Lambda(x) := {\rm argmin}\Big\{g(y)+\frac{1}{2\lambda} \langle \Lambda (x-y),x-y\rangle \;\Big|\;y\in\R^n\Big\}.
		\end{equation}
		The local superlinear convergence results are obtained in the thesis under the assumptions that the proximity operator \eqref{proxi} is semismooth and the generalized Jacobian of $F^{\Lambda}(x)$ is nonsingular. Meanwhile, our method addresses solving the subgradient inclusion $0
		\in \partial\varphi(x)$ with $\varphi$ taken from \eqref{QP}, where the regularizer $g$ is merely prox-regular under the metric regularity and semismooth$^*$ assumption on the subgradient mapping; see Remark~\ref{semialg} for the related discussions. \end{Remark} \vspace*{-0.2in}

\section{Applications to Regularized Least Square Problems}\label{lasso}\vspace*{-0.05in}\setcounter{equation}{0}

In this section we provide applications of our main Algorithm~\ref{NM4} to solving two practical models
	arsing in statistics, machine learning, image processing, etc. The first model is known as the {\em Lasso problem} (or as the {\em $\ell^1$-regularized least square optimization problem}). It was formulated by Tibshirani \cite{Tibshirani} for statistical applications. The second model we label here as the {\em $\ell^{1,2}$-regularized least square optimization problem}. Both of these problems can be described in the form 
	\begin{eqnarray}\label{regression}
		\text{minimize }\;\varphi(x):=\frac{1}{2}\|Ax-b\|_2^2+g(x),\quad
		x\in\R^n, 
	\end{eqnarray} 
	where $A$ is an $m\times n$ matrix, $b\in\R^m$, and the regularizer $g\colon\R^n\to\R$ is a nonsmooth function, which is convex in the first problem and nonconvex in the second one. Rewriting these problems in the composite optimization form \eqref{QP}, we'll constructively express in what follows all the elements and assumptions of Algorithm~\ref{NM4} entirely in terms of the given data for both problems under consideration.\vspace*{0.03in}
	
	Let is start with the {\em Lasso problem} formulated as
	\begin{eqnarray}\label{Lasso}
		\text{minimize }\;\varphi(x):=\frac{1}{2}\|Ax-b\|_2^2+\mu\|x\|_1,\quad x\in\R^n,
	\end{eqnarray}
	where $A$ is an $m\times n$ matrix, $\mu>0$, $b\in\R^m$, $\|\cdot\|_2$ and $\|\cdot\|_1$ stand for the standard Euclidean and sum norms, respectively. Note that \eqref{Lasso} is a subclass of
	\eqref{regression} with $g(x):=\mu\|x\|_1$ and
	\begin{equation}\label{fg}
		f(x):=\frac{1}{2}\langle\Tilde{A}x,x\rangle+\langle\Tilde{b},x\rangle+\Tilde{\alpha}\quad\text{with
		}\;\Tilde{A}:=A^*A,\;
		\Tilde{b}:=-A^*b,\;\mbox{ and }\;\Tilde{\alpha}:=\frac{1}{2}\|b\|^2, 
	\end{equation} 
	where the matrix $\Tilde{A}=A^*A$ is always {\em symmetric and
		positive-semidefinite}. The Lasso problem \eqref{Lasso} is convex and
	always admits an optimal solution, which is fully characterized by the
	subgradient inclusion 
	\begin{equation}\label{sumLasso}
		0\in\partial\varphi(x)\;\mbox{ with }\;\ph\;\mbox{ from
		}\;\eqref{Lasso}.
	\end{equation}
	To efficiently apply Algorithm~\ref{NM4} to solving the subgradient system
	\eqref{sumLasso}, we have to provide {\em explicit calculations} of $\partial\ph$, $\partial^2g$, and ${\rm Prox}_{\gg g}$ via the initial data of \eqref{Lasso}. Here they are. 
	
	\begin{Proposition}[\bf explicit calculations for Lasso]\label{lasso-calc} For problem \eqref{Lasso}, the following hold:
		\begin{equation}\label{proxofg}
			\big(\text{\rm Prox}_{\gamma g}(x)\big)_i=\begin{cases}
				x_i-\mu\gamma&\text{if}\quad x_i>\mu\gamma,\\
				0&\text{if}\quad-\mu\gamma\le x_i\le\mu\gamma,\\
				x_i+\mu\gamma&\text{if}\quad x_i<-\mu\gamma.
			\end{cases}
		\end{equation}
		\begin{equation}\label{norm1}
			\partial\varphi(x)=A^*(Ax-b)+\mu F(x)\;\mbox{ for all }\;x\in\R^n,
		\end{equation}
		where the set-valued mapping $F\colon\R^n\tto\R^n$ is computed by
		\begin{equation}\label{F}
			F(x)=\left\{v\in\R^n\;\bigg|\;
			\begin{array}{@{}cc@{}}
				v_j=\text{\rm sgn}(x_j),\;x_j\ne 0,\\
				v_j\in[-1,1],\;x_j=0
			\end{array}\right\}.
		\end{equation}
		For each $(x,y)\in\text{\rm gph}\,\partial g$ and $v=(v_1,\ldots,v_n)\in\R^n$, we have 
		\begin{equation}\label{second-order}
			\partial^2g(x,y)(v)=\Big\{w\in\R^n\;\Big|\;\Big(\frac{1}{\mu}w_i,-v_i\Big)\in G\Big(x_i,\frac{1}{\mu}y_i\Big),\;i=1,\ldots,n\Big\},
		\end{equation}
		where the mapping $G\colon\R^2\tto\R^2$ is given by
		\begin{equation}\label{G}
			G(t,p):=\begin{cases}
				\{0\}\times\R&\text{\rm if}\quad t\ne 0,\;p\in\{-1,1\},\\
				\R\times\{0\}&\text{\rm if}\quad t=0,\;p\in(-1,1),\\
				(\R_{+}\times\R_{-})\cup(\{0\}\times\R)\cup(\R\times\{0\})&\text{\rm if}\quad t=0,\;p=-1,\\
				(\R_{-}\times\R_{+})\cup(\{0\}\times\R)\cup(\R\times\{0\})&\text{\rm if}\quad t=0,\;p=1,\\
				\emp&\text{\rm otherwise}.
			\end{cases}
		\end{equation}
	\end{Proposition}
	\begin{proof}  The formula for the proximal mapping \eqref{proxofg} follows from definition \eqref{Prox}. Proceeding with the subdifferential calculation, we clearly get $\partial(\|\cdot\|_1)(x)= F(x)$ with $F(x)$ is computed by \eqref{F}, and thus
		\begin{equation}\label{first-order}
			\partial g(x)=\left\{v\in\R^n\;\bigg|\;
			\begin{array}{@{}cc@{}} v_j=\text{\rm sgn}(x_j),\;x_j\ne 0,\\
				v_j\in[-\mu,\mu],\;x_j=0
			\end{array}\right\} \quad\text{whenever }\; x\in\R^n.
		\end{equation}
		Then \eqref{norm1} follows from \eqref{first-order} and the subdifferential sum rule of convex analysis. 
		
		It remains to verify the second-order subdifferential formula \eqref{second-order} for $g$ at $(x,y)\in\gph\partial g$. Considering the auxiliary function $\psi(t):=|t|$ on $\R$, observe that
		\begin{equation*}
			\gph\partial\psi=\big((\R\setminus\{0\})\times\{-1,1\}\big)\cup\big(\{0\}\times[-1,1]\big)\;\mbox{ and }\;\|x\|_1=\sum_{i=1}^n\psi(x_i),\quad x\in\R^n.
		\end{equation*}
		Furthermore, we see that $N_{\text{gph}\,\partial\psi}=G$ for the mapping $G$ defined in \eqref{G}. It now follows from Mordukhovich and Outrata \cite[Theorem~4.3]{BorisOutrata} that
		\begin{equation*}
			\partial^2(\|\cdot\|_1)\Big(x,\frac{1}{\mu}y\Big)(v)=\Big\{u\in\R^n\;\Big|\;(u_i,-v_i)\in N_{\text{gph}\,\partial\psi}\Big(x_i,\frac{1}{\mu}y_i\Big) \Big\},
		\end{equation*}
		which therefore justifies the fulfillment of \eqref{second-order} and thus completes the proof of the proposition.
	\end{proof} 
	
	Note that by applying to \eqref{Lasso} the second-order subdifferential sum rule from Mordukhovich \cite[Theorem~1.21]{Mordukhovich06} and using \eqref{second-order}, we easily arrive at the calculation of $\partial^2\ph$, which is not needed to run Algorithm~\ref{NM4} for \eqref{Lasso}. However, to run our algorithm, we have to explicitly determine the sequences $\{v^k\}$ and $\{d^k\}$ taken from \eqref{subprob1} and \eqref{prox-dir3}, respectively. For the case of $v^k$, it follows from 
	\eqref{proxofg} that 
	$$
	\left(v^k\right)_i=\begin{cases}
		(u^k)_i-\mu\gamma&\text{if}\quad (u^k)_i>\mu\gamma,\\
		0&\text{if}\quad-\mu\gamma\le (u^k)_i\le\mu\gamma,\\
		(u^k)_i+\mu\gamma&\text{if}\quad (u^k)_i<-\mu\gamma. \end{cases} \quad \text{where }\; u^k= x^k - \gamma (A^*Ax^k+b).
	$$
	To find $d^k$, we substitute into \eqref{prox-dir3} the calculation of $\partial^2g$ from \eqref{second-order}, which brings us to the conditions
	\begin{equation*}
		\begin{cases}
			\big(-\frac{1}{\gamma}(x^k-v^k)-A^*Ad^k \big)_i=0&\text{if}\quad\left(v^k\right)_i\ne 0,\\
			\big(x^k-v^k + d^k\big)_i=0&\text{if}\quad\left(v^k\right)_i=0.
		\end{cases} 
	\end{equation*} 
	Thus $d^k$ can be computed by solving the linear equation $X^k d = v^k-x^k$, where
	\begin{equation}\label{X} (X^k)_i := \begin{cases}
			\gamma(A^*A)_i & \text{if} \quad (v^k)_i \ne 0,\\
			I_i  & \text{if}\quad (v^k)_i=0. \end{cases},
	\end{equation}
	where $(X^k)_i$, $(A^*A)_i$, $I_i$ are  the $i^{\text{th}}$  rows of the matrices $X^k$, $A^*A$, and $I$, respectively.
	
	\medskip
	The above calculations allow us to apply Algorithm~\ref{NM4} to solving the Lasso problem \eqref{Lasso}.\vspace*{-0.05in}
	
	\begin{Theorem}[\bf solving Lasso]\label{solveLasso} Let $\ox$ be an optimal solution to 
		\eqref{Lasso} such that $\partial\varphi$ is metrically regular around $(\ox,0)$. Then all the assumptions of Theorem~{\em\ref{localNM4}} are satisfied at $\ox$
		Thus Algorithm~{\rm\ref{NM4}}, with the
		parameter $\gamma>0$ for which $I-\gamma A^*A$ is nonsingular and the data
		$\partial\ph$, $\partial^2 g$, and ${\rm Prox}_{\gamma g}$
		explicitly computed in terms of $(A,b,\mu)$ in
		Proposition~{\rm\ref{lasso-calc}}, is well-defined around $\ox$, and the sequence of its
		iterates $\{x^k\}$ locally $Q$-superlinearly converges to $\ox$. Furthermore, $\ox$ is a
		tilt-stable local minimizer of $\varphi$. 
	\end{Theorem}
	\begin{proof} Observe first that the subdifferential mapping $\partial\ph$ enjoys the property of strong metric regularity around $(\ox,0)$. This follows from the convexity of $\ph$ and the result of Arag\'on Artacho and Geoffroy \cite[Proposition~3.8]{ag}. Furthermore, we get from the formula for
		the mapping $F$ in Proposition~\ref{lasso-calc} that the
		graph of $F$ is the union of finitely many closed convex polyhedra, and
		hence $F$ is semismooth$^*$ on its graph. Since
		the function $x\mapsto A^*(Ax-b)$ is continuously differentiable on
		$\R^n$, it follows from the subdifferential formula \eqref{norm1} and Proposition~3.6 from Gfrerer and Outrata \cite{Helmut} that the mapping $\partial\varphi$ is semismooth$^*$ at every point of its
		graph. Taking now into account the solvability results of
		Section~\ref{sec:solvN}, we see that all the assumptions of
		Theorem~\ref{localNM4} hold for the Lasso problem \eqref{Lasso}. Therefore, the solvability and convergence conclusions of the theorem follow from Theorem~\ref{localNM4}, where the obtained the specification of
		$\gamma$ is due to the convexity of $g$. Finally, the
		tilt-stability of $\ox$ is a consequence of Proposition~3.1 from Drusvyatskiy and Lewis \cite{dl}.
	\end{proof} 
	
	\medskip
	The next remark provides an essential clarification of our assumptions. 
	
	\begin{Remark}[\bf positive-definiteness and metric regularity]\label{pos-defMT} {\rm Note that when the matrix $A^*A$ is positive-definite, the metric regularity of $\partial\varphi$ around
			the reference point in Theorem~\ref{solveLasso} is automatically
			satisfied. Observe that the metric regularity assumption is {\em weaker} than the
			positive-definiteness of $A^*A$. A simple class of functions $\ph$
			illustrating this is given by $\ph=f+|x|$. In
			particular, for $f\equiv 0$ we have \begin{equation*}
				\partial^2\ph(0,0)(v)=\big\{w\in\R\;\big|\;(w,-v)\in\R\times\{0\}\big\},
			\end{equation*} and thus $\text{\rm ker}\,\partial^2\ph(0,0)=\{0\}$,
			which tells us by \eqref{Morcri} that $\partial\ph$ is metrically
			regular around $(0,0)$. Some sufficient conditions for the	metric regularity of the subdifferential operator of the objective function \eqref{Lasso} can be found in the recent preprint by Berk et al. \cite{bbh22}.} 
	\end{Remark}
	
	In the remainder of this section, we develop applications of Algorithm~\ref{NM4} to solving the subgradient inclusion $0\in\partial\varphi(x)$ corresponding to the stationary condition for the nonsmooth and {\em nonconvex} composite optimization problem formulated as follows:
	\begin{eqnarray}\label{nonconvexcomposite} 
		\text{minimize
		}\;\varphi(x):=\frac{1}{2}\|Ax-b\|_2^2+\mu_1\|x\|_1-\mu_2
		\|x\|_2^2,\quad x\in\R^n,
	\end{eqnarray} where $A$ is an $m\times n$ matrix, $b\in\R^m$, and $\mu_1,\mu_2>0$. We label \eqref{nonconvexcomposite} as the $\ell^{1,2}$-{\em regularized least square optimization problem}. 
	It is clear that \eqref{nonconvexcomposite} is a subclass of \eqref{regression} whose cost function is represented in the composite form $\varphi:=f+g$, where \begin{equation}\label{fg2}
		f(x):=\frac{1}{2}\langle\Tilde{A}x,x\rangle+\langle\Tilde{b},x\rangle+\Tilde{\alpha}\quad\text{and
		}\;g(x):=\mu_1\|x\|_1-\mu_2 \|x\|_2^2 
	\end{equation} with
	$\Tilde{A}:=A^*A$, $\Tilde{b}:=-A^*b$, and
	$\Tilde{\alpha}:=\frac{1}{2}\|b\|^2$, and where the matrix
	$\Tilde{A}=A^*A$ is symmetric and positive-semidefinite. Note that the regularizer $g$ is nonsmooth and nonconvex, while being prox-bounded and continuously prox-regular on $\R^n$
	with modulus $2\mu_2$ due to the convexity of  the quadratically shifted function $g + \mu_2 \|\cdot\|^2$. This allows us to implement Algorithm~\ref{NM4} to solving the subgradient
	system $0\in \partial\varphi(x)$, where $\varphi$ is taken from
	\eqref{nonconvexcomposite}. To proceed, we  provide {\em explicit
		calculations} of all the algorithm ingredients in terms of the initial data of problem \eqref{nonconvexcomposite}.\vspace*{-0.05in}
	
	\begin{Proposition}[\bf precise algorithmic calculations for
		$\ell^{1,2}$-regularizer]\label{lasso-calc2} For the cost function $\ph$ in \eqref{nonconvexcomposite}, we have the subdifferential formula
		\begin{equation}\label{norm1-2}
			\partial\varphi(x)=A^*(Ax-b)+\mu_1 F(x) - 2\mu_2x \;\mbox{ whenever }\;x\in\R^n,
		\end{equation}
		where the set-valued mapping $F\colon\R^n\tto\R^n$ is taken from \eqref{F}. The proximal mapping \eqref{Prox} associated with the regularizer $g(x)=\mu_1\|x\|_1-\mu_2\|x\|_2^2$ is computed by 
		\begin{equation}\label{proxofg2}
			\big(\text{\rm Prox}_{\gamma g}(x)\big)_i=\begin{cases}
				\disp\frac{x_i - \gamma\mu_1}{1-2\gamma\mu_2} &\text{if}\quad x_i>\mu_1\gamma,\\
				0&\text{if}\quad-\mu_1\gamma\le x_i\le\mu_1\gamma,\\
				\disp\frac{x_i + \gamma\mu_1}{1-2\gamma\mu_2}&\text{if}\quad x_i<-\mu_1\gamma
			\end{cases}
		\end{equation}
		provided that $\gamma\in \left(0,\frac{1}{2\mu_2} \right)$. For each $(x,y)\in\text{\rm gph}\,\partial g$ and $v=(v_1,\ldots,v_n)\in\R^n$, the generalized Hessian of $g$ is computed by the formula
		\begin{equation}\label{second-order2}
			\partial^2g(x,y)(v)=\Big\{w\in\R^n\;\Big|\;\Big(\frac{1}{\mu_1}(w_i+2\mu_2 v_i),-v_i\Big)\in G\Big(x_i,\frac{1}{\mu_1}(y_i +2\mu_2 x_i)\Big),\;i=1,\ldots,n\Big\},
		\end{equation}
		where the mapping $G\colon\R^2\tto\R^2$ is taken from \eqref{G}.
	\end{Proposition}
	\begin{proof} The calculation of ${\rm Prox}_{\gamma g}(x)$ in \eqref{proxofg2} follows from the definition. Also, we can easily get
		\begin{equation}\label{first-order2}
			\partial g(x)=\mu_1 F(x) - 2\mu_2x \quad\text{whenever }\; x\in\R^n,
		\end{equation}
		where $F\colon\R^n\tto\R^n$ is defined in  \eqref{F}. Thus the subdifferential formula \eqref{norm1-2} is a consequence of \eqref{first-order2} and the standard first-order subdifferential sum rule. Arguing similarly to the proof of  Proposition~\ref{lasso-calc}, we verify the second-order calculation formula \eqref{second-order2} and thus complete the proof.
	\end{proof} 
	
	The obtained calculations allow us to compute parameters $v^k$ and $d^k$ in Algorithm~\ref{NM4} to solve the stationary inclusion $0\in\partial\ph(x)$ for problem \eqref{nonconvexcomposite}. Indeed, it follows from \eqref{proxofg2} that
	$$
	\left(v^k\right)_i=\begin{cases}
		\disp\frac{(u^k)_i - \gamma\mu_1}{1-2\gamma\mu_2}&\text{if}\quad (u^k)_i>\mu_1\gamma,\\
		0&\text{if}\quad-\mu_1\gamma\le (u^k)_i\le\mu_1\gamma,\\
		\disp\frac{(u^k)_i + \gamma\mu_1}{1-2\gamma\mu_2}&\text{if}\quad
		(u^k)_i<-\mu_1\gamma,
	\end{cases} \quad \text{where }\; u^k= x^k - \gamma (A^*Ax^k+b).
	$$ 
	Using further the formulas in \eqref{second-order2} and
	\eqref{first-order2}, we get the relationships 
	\begin{equation*}
		\begin{cases}
			\big(-\frac{1}{\gamma}(x^k-v^k)-A^*Ad^k +2\mu_2(x^k-v^k+d^k)  \big)_i=0&\text{if}\quad\left(v^k\right)_i\ne 0,\\
			\big(x^k-v^k + d^k\big)_i=0&\text{if}\quad\left(v^k\right)_i=0
		\end{cases}
	\end{equation*} 
	for $d^k$ and hence find these directions by solving the equation $X^k d =(1-2\gamma\mu_2)(v^k-x^k)$, where 
	\begin{equation}\label{X2} (X^k)_i := \begin{cases}
			\gamma(A^*A)_i - 2\gamma\mu_2 I_i & \text{if} \quad (v^k)_i \ne 0,\\
			(1- 2\gamma\mu_2)I_i  & \text{if}\quad (v^k)_i=0. \end{cases}
	\end{equation}
	
	Employing the above calculations in the framework of Theorem~\ref{localNM4}, we arrive at the following efficient computational procedure to solve problem \eqref{nonconvexcomposite} with justifying its excellent performance.\vspace*{-0.05in}
	
	\begin{Theorem}\label{solvenonconvex}  Let $\ox$ be such that $0 \in
		\partial\varphi(\ox)$ with $\varphi$ defined in
		\eqref{nonconvexcomposite}, and let $\partial\varphi$ be metrically
		regular around $(\ox,0)$. Then all the assumptions of Theorem~{\rm\ref{localNM4}} are satisfied for Algorithm~{\rm\ref{NM4}} with the parameter $\gamma\in \left( 0, \frac{1}{2\mu_2} \right)$
		such that $I-\gamma A^*A$ is nonsingular and the data $\partial\ph$,
		$\partial^2 g$, and ${\rm Prox}_{\gamma g}$ explicitly computed in
		terms of $(A,b,\mu)$ by the formulas in
		Proposition~{\rm\ref{lasso-calc2}}. Thus Algorithm~{\rm\ref{NM4}} is well-defined around $\ox$ for this problem, and the sequence of its
		iterates $\{x^k\}$ $Q$-superlinearly converges to $\ox$. 
	\end{Theorem}
	\begin{proof}  According to Theorem~\ref{twiceepi}, the second-order subdifferential inclusion
		\eqref{newton-inc} is robustly solvable around $(\ox,0)$ for $\ph$ from \eqref{nonconvexcomposite} provided that the  mapping $\partial\ph$ is metrically regular around $(\ox,0)$ as assumed, and if  $\ph$ and twice epi-differentiable
		at any points near $\ox$. Let us check the fulfillment of the latter property on the whole space $\R^n$. Indeed, observe that $\varphi(x)=\varphi_1 + \varphi_2$, where
		$$
		\varphi_1(x):= \frac{1}{2}\|Ax-b\|^2 - \mu_2 \|x\|_2^2 \quad \text{and }\; \varphi_2(x):= \mu_1 \|x\|_1.
		$$
		Since $\varphi_2$ is a proper, convex, and piecewise
		linear-quadratic function on $\R^n$, it follows from Rockafellar and Wets
		\cite[Proposition~13.9]{Rockafellar98} that $\varphi_2$ is twice
		epi-differentiable on $\R^n$. Furthermore, the function $\ph_1$ also enjoys this property on $\R^n$, since it is $\mathcal{C}^2$-smooth of $\varphi_1$; see Exercise~13.18 in the aforementioned book \cite{Rockafellar98}. This clearly verifies the twice epi-differentiability of the sum function $\ph$.
		
		It remains to check that the mapping $\partial\ph$ is semismooth$^*$ at the reference point. But it can be done similarly to the proof of Theorem~\ref{solveLasso} due to the subdifferential expression in \eqref{norm1-2}.
	\end{proof}\vspace*{-0.25in}

\color{black}\section{Concluding Remarks}\label{conc}\vspace*{-0.05in}
	This paper proposes and develops a generalized Newton method to
	solve gradient and subgradient systems by using second-order
	subdifferentials of ${\cal C}^{1,1}$ and extended-real-valued
	prox-regular functions, respectively, together with the appropriate
	tools of second-order variational analysis. The suggested algorithms
	for gradient and subgradient systems are comprehensively
	investigated with establishing verifiable conditions for their
	well-posedness/solvability and local superlinear convergence.
	Applications to solving important classes of nonsmooth
	{regularized least square problems} are given with complete computations of algorithm ingredients and assumption verifications.
	
	The results of this paper concern far-going nonsmooth extensions of the basic Newton method for ${\cal C}^2$-smooth functions. Besides further implementations of the obtained results and their applications to practical models, in our future research we intend to develop other, more advanced versions of nonsmooth second-order algorithms of the Newton and quasi-Newton types with establishing their local and global convergence as well as efficient specifications for broad classes of variational inequalities and constrained optimization problems. \\[1ex]
{\bf Acknowledgments.} Research of the first author is funded by Ho Chi Minh City University of Education Foundation for Science and Technology under grant number CS.2022.19.04TD. Research of the second and third authors was
	partly supported by the US National Science Foundation under grants
	DMS-1808978 and DMS-2204519. The research of the
	second author was also supported by the Australian Research Council
	under Discovery Project DP-190100555.  The authors are very grateful to three anonymous referees for their helpful remarks and suggestions, which allowed us to significantly improve the original presentation. \vspace*{-0.1in}
 
\small 


\begin{thebibliography}{10} 

\bibitem{ag} Arag\'on Artacho FJ, Geoffroy MH (2008)
Characterizations of metric regularity of subdifferentials, J.
Convex Anal. 15:365--380.

\bibitem{Beck} Beck A (2017) First-Order Methods in Optimization
(SIAM, Philadelphia, PA).

\bibitem{bbh22} Berk A, Brugiapaglia S, Hoheisel T (2022) Lasso reloaded: a variational analysis perspective with applications to compressed sensing. arXiv preprint arXiv:2205.06872.

\bibitem{Bonnans} Bonnans JF (1994) Local analysis of Newton-type
methods for variational inequalities and nonlinear programming.
Appl. Math. Optim. 29: 161--186.

\bibitem{Burke} Burke JV, Qi L (1991) Weak directional closedness
and generalized subdifferentials. J. Math. Anal. Appl. 159:485--499.

\bibitem{ChieuNghia} Chieu NM, Hien LV, Nghia TTA (2018)
Characterization of tilt stability via subgradient graphical
derivative with applications to nonlinear programming. SIAM J.
Optim. 28:2246--2273.

\bibitem{ChieuLee17} Chieu NH, Lee GM, Yen ND (2017) Second-order
subdifferentials and optimality conditions for ${\cal C}^1$-smooth
optimization problems. Appl. Anal. Optim. 1:461--476.

\bibitem{cl} Clarke FH (1983) Optimization and Nonsmooth Analysis
(Wiley, New York).

\bibitem{chhm} Colombo G, Henrion R, Hoang ND, Mordukhovich BS
(2016) Optimal control of the sweeping process over polyhedral
controlled sets, J. Diff. Eqs. 260:3397--3447.

\bibitem{ds} Dias S, Smirnov G (2012) On the Newton method for
set-valued maps. Nonlinear Anal. TMA 75:1219--1230.

\bibitem{dsy} Ding C, Sun D, Ye JJ (2014) First-order optimality
conditions for mathematical programs with semidefinite cone
complementarity constraints. Math. Program. 147:539--379.

\bibitem{dr} Dontchev AL, Rockafellar RT (1996) Characterizations of
strong regularity for variational inequalities over polyhedral
convex sets. SIAM J. Optim. 6:1087--1105.

\bibitem{Donchev09} Dontchev AL, Rockafellar RT (2014) Implicit
Functions and Solution Mappings: A View from Variational Analysis
(2nd edition, Springer, New York).

\bibitem{dl} Drusvyatskiy D, Lewis AS (2013) Tilt stability, uniform
quadratic growth, and strong metric regularity of the
subdifferential. SIAM J. Optim. 23:256--267.

\bibitem{dmn} Drusvyatskiy D, Mordukhovich BS, Nghia TTA (2014)
Second-order growth, tilt stability, and metric regularity of the
subdifferential. J. Convex Anal. 21:1165--1192.

\bibitem{JPang} Facchinei F, Pang J-C (2003) Finite-Dimensional
Variational Inequalities and Complementarity Problems, Vol. II
(Springer, New York).

\bibitem{g} Gfrerer H (2013) On directional metric regularity,
subregularity and optimality conditions for nonsmooth mathematical
programs. Set-Valued Var. Anal. 21:151--176.

\bibitem{gm} Gfrerer H, Mordukhovich BS (2015) Complete
characterization of tilt stability in nonlinear programming under
weakest qualification conditions. SIAM J. Optim. 25:2081--2119.

\bibitem{Helmut} Gfrerer H, Outrata JV (2021) On a semismooth$^*$
Newton method for solving generalized equations. SIAM J. Optim. 31:489--517.

\bibitem{gin-mor} Ginchev I, Mordukhovich BS (2011) On directionally
dependent subdifferentials. C. R. Acad. Bulg. Sci. 64:497--508.

\bibitem{Hare} Hare LW, Sagastiz\'abal C (2009) Computing proximal
points of nonconvex functions. Math. Program. 116:221--258.

\bibitem{hmn} Henrion R, Mordukhovich BS, Nam NM (2010) Second-order
analysis of polyhedral systems in finite and infinite dimensions
with applications to robust stability of variational inequalities.
SIAM J. Optim. 20:2199--2227.

\bibitem{ho} Henrion R, Outrata JV (2001) A subdifferential
condition for calmness of multifunctions. J. Math. Anal. Appl.
258:110--130.

\bibitem{hos} Henrion R, Outrata JV, Surowiec T (2012) Analysis of
M-stationary points to an EPEC modeling ologopolistic competition in
an electricity spot market. ESAIM: Contr. Optim. Calc. Var.
18:295--317.

\bibitem{HungBoris} Hoheisel T, Kanzow C, Mordukhovich BS, Phan H
(2012) Generalized Newton's method for nonsmooth equations based on
graphical derivatives. Nonlinear Anal. 75:1324--1340.

\bibitem{defeng} Jiang H, Qi L, Chen X, Sun D (1996) Semismoothness
and superlinear convergence in nonsmooth optimization and nonsmooth
equations.  De Pillo G, Giannessi F, eds. Nonlinear Optimization and
Applications (Springer, New York), 197--212.


\bibitem{Solo14} Izmailov AF, Solodov MV (2014) Newton-Type Methods
for Optimization and Variational Problems (Springer, New York).

\bibitem{kanzow21} Kanzow C, Lechner T (2021) Globalized inexact proximal newton-type methods for nonconvex composite
functions. Comput Optim Appl. 78(2):377–410

\bibitem{josephy} Josephy NH (1979) Newton's method for generalized
equations. Technical Summary Report No.\ 1965. Mathematics Research
Center, University of Wisconsin (Madison, WI).

\bibitem{kender} Kenderov P (1975) Semi-continuity of set-valued
monotone mappings. Fundamenta Mathematicae 88:61--69.

\bibitem{kmpt221} Khanh PD, Mordukhovich B, Phat VT, Tran BD (2021) Globally convergent coderivative-based gener-
alized newton methods in nonsmooth optimization. arXiv preprint arXiv:2109.02093.

\bibitem{Klatte} Klatte D, Kummer B (2002) Nonsmooth Equations in
Optimization. Regularity, Calculus, Methods and Applications (Kluwer
Academic Publishers, Dordrecht, The Netherlands).



\color{black}\bibitem{Kummer} Kummer B (1988) Newton's method for
non-differentiable functions. Guddat J et al., eds. Advances in
Mathematical Optimization (Akademie-Verlag, Berlin), 114--124.

\bibitem{lss} Lee JD, Sun Y, Saunders MA (2014) Proximal Newton-type
methods for minimizing composite functions. SIAM J. Optim.
24:1420--1443.

\bibitem{msz} Meng F, Sun D, Zhao Z (2005) Semismoothness of
solutions to generalized equations and the Moreau-Yosida
regularization. Math. Program. 104:561--581.

\bibitem{Mifflin} Mifflin R (1977) Semismooth and semiconvex
functions in constrained optimization. SIAM J. Control Optim.
15:957--972.

\bibitem{Milzarek} Milzarek A (2016) Numerical methods and second
order theory for nonsmooth problems (doctoral thesis, advisor
Ulbrich M,  Technical University of Munich, Germany).

\color{black}\bibitem{mms} Mohammadi A, Mordukhovich BS, Sarabi ME (2022) Variational analysis of composite models with
applications to continuous optimization. Math. Oper. Res. 47(1):397–426.

\bibitem{mms1} Mohammadi A, Mordukhovich BS, Sarabi ME (2021)
Parabolic regularity in geometric variational analysis. Trans. Amer.
Math. Soc. 374:1711--1763.

\bibitem{ms} Mohammadi A, Sarabi ME (2020) Twice
epi-differentiability of extended-real-valued functions with
applications in composite optimization. SIAM J. Optim. 30: 2379--2409.

\bibitem{m92} Mordukhovich BS (1992) Sensitivity analysis in
nonsmooth optimization. Field DA, Komkov V, eds. Theoretical Aspects
of Industrial Design, SIAM Proc. Appl. Math. 58:32--46
(Philadelphia, PA).

\bibitem{Mordu93} Mordukhovich BS (1993) Complete characterizations
of openness, metric regularity, and Lipschitzian properties of
multifunctions. Trans. Amer. Math. Soc. 340:1--35.

\bibitem{Mordukhovich06} Mordukhovich BS (2006) Variational Analysis
and Generalized Differentiation, I: Basic Theory, II: Applications
(Springer, Berlin).

\bibitem{Mor18} Mordukhovich BS (2018) Variational Analysis and
Applications (Springer, Cham, Switzerland).

\bibitem{MorduNghia} Mordukhovich BS, Nghia TTA (2015) Second-order
characterizations of tilt stability with applications to nonlinear
programming. Math. Program. 149:83--104.

\bibitem{BorisOutrata} Mordukhovich BS, Outrata JV (2001) On
second-order subdifferentials and their applications, SIAM J. Optim.
12:139--169.

\bibitem{mr} Mordukhovich BS, Rockafellar RT (2012) Second-order
subdifferential calculus with applications to tilt stability in
optimization. SIAM J. Optim. 22:953--986.

\bibitem{BorisEbrahim} Mordukhovich BS, Sarabi ME (2021) Generalized
Newton algorithms for tilt-stable minimizers in nonsmooth
optimization. SIAM J. Optim. 31:1184--1214.

\bibitem{myzz} Mordukhovich BS, Yuan X, Zeng S, Zhang J (2022) A globally convergent proximal newton-type method
in nonsmooth convex optimization. Mathematical Programming 1–38.

\bibitem{os} Outrata JV, Sun D (2008) On the coderivative of the
projection operator onto the second-order cone. Set-Valued Anal.
16:999--1014.

\bibitem{p90} Pang J-S (1990) Newton's method for B-differentiable
equations. Math. Oper. Res. 15:311--341.

\bibitem{pb} Patrinos P, Bemporad A (2013) Proximal Newton methods
for convex composite optimization. Proc. IEEE Conf. Dec. Cont.
2358--2363 (Florence, Italy).

\bibitem{Poliquin} Poliquin RA, Rockafellar RT (1996) Prox-regular
functions in variational analysis. Trans. Amer. Math. Soc.
348:1805--1838.

\bibitem{Poli} Poliquin RA, Rockafellar RT (1998) Tilt stability of
a local minimum. SIAM J. Optim. 8:287--299.

\bibitem{LQi} Qi L, Sun J (1993) A nonsmooth version of Newton's
method. Math. Program. 58: 353--367.

\bibitem{rob} Robinson SM (1979) Generalized equations and their
solutions, I: basic theory. Math. Program. Study 10:128--141.

\bibitem{Rockafellar98} Rockafellar RT, Wets RJ-B (1998) Variational
Analysis (Springer, Berlin).

\bibitem{rock-zag} Rockafellar RT, Zagrodny D (1997) A
derivative-coderivative inclusion in second-order nonsmooth
analysis. Set-Valued Anal. 5:1--17.

\bibitem{Sun2001} Sun D  (2001) A further result on an implicit
function theorem for locally Lipschitz functions. Oper. Res. Lett.
28:193--198.

\bibitem{tsp} Themelis A, Stella L, Patrinos P (2918)
Forward-backward envelope for the sum of two nonconvex functions:
Further properties and nonmonotone linesearch algorithms. SIAM J.
Optim. 28:2274--2303.

\bibitem{Tibshirani} Tibshirani R (1996) Regression shrinkage and
selection via the Lasso. J. R. Stat. Soc. 58:267--288.

\bibitem{Ul} Ulbrich M (2011) Semismooth Newton Methods for
Variational Inequalities and Constrained Optimization Problems in
Function Spaces (SIAM, Philadelphia, PA).

\bibitem{wang} Wang X (2004) Subdifferentiability of real functions.
Real Anal. Exchange 30:137--172.

\bibitem{yy} Yao J-C, Yen ND (2009) Coderivative calculation related
to a parametric affine variational inequality. Part~1: Basic
calculation. Acta Math. Vietnam. 34:157--172. 
\end{thebibliography}
\end{document}